\title[Safe Learning]{Safely Learning Dynamical Systems}
\newcommand{\myproof}[1]{}
\newcommand{\myproofof}[2]{}
\newcommand{\myproof}[1]{\begin{proof}#1\end{proof}}
\newcommand{\myproofof}[2]{\begin{proof}[#1]#2\end{proof}}
\author{\Name{Amir Ali Ahmadi} \Email{aaa@princeton.edu}\\
 \Name{Abraar Chaudhry} \Email{azc@princeton.edu}\\
 \addr Princeton University
 \AND
 \Name{Vikas Sindhwani} \Email{sindhwani@google.com}\\
 \Name{Stephen Tu} \Email{stephentu@google.com}\\
 \addr Robotics at Google, New York}
\@ifl@t@r\fmtversion{2019/08/22}%
{
\renewcommand*\@subfigurelabel[3]{#1\subfigurelabel{#2}}
\renewcommand*\@subfigref[1]{%
{%
\def\@subfigurelabel##1##2##3{\subfigurelabel{##2}}%
\ref{#1}%
}%
}
}%
{
\renewcommand*\@subfigurelabel[2]{#1\subfigurelabel{#2}}
\renewcommand*\@subfigref[1]{%
{%
\def\@subfigurelabel##1##2{\subfigurelabel{##2}}%
\ref{#1}%
}%
}
}
\newcommand{\R}{\mathbb{R}}
\newcommand{\Rn}{\R^n}
\newcommand{\nn}{{n \times n}}
\newcommand{\Rnn}{\R^\nn}
\newcommand{\Snn}{\mathbb{S}^\nn}
\newcommand{\Tr}{\mathrm{Tr}}
\newcommand{\spanof}[1]{\textnormal{span}(#1)}
\newcommand{\norm}[1]{\| #1 \|}
\newcommand{\trueA}{A_{\star}}
\newcommand{\truef}{f_{\star}}
\newcommand{\trueg}{g_{\star}}
\newcommand{\defn}{\mathrel{\mathop :}=}
\newcommand{\aaa}[1]{{\textcolor{purple}{#1}}}
\newcommand{\newstuff}[1]{{\color{magenta}{#1}}}
\newcommand{\newnewstuff}[1]{{\color{olive}{#1}}}
\newcommand{\versionii}[1]{{\color{magenta}{#1}}}
\newcommand{\transpose}{\mathsf{T}}
\renewcommand{\textcolor}[1]{{{}}}
\renewcommand{\newstuff}[1]{{{#1}}}
\renewcommand{\newnewstuff}[1]{{{#1}}}
\renewcommand{\versionii}[1]{{{#1}}}
\renewcommand{\Pr}{\mathbb{P}}
\begin{document}

\maketitle

\vspace{-10mm}

\begin{abstract}%
A fundamental challenge in learning an unknown dynamical system
is to reduce model uncertainty by making measurements while maintaining safety. 
In this work, we formulate a mathematical definition of what
it means to safely learn a dynamical system 
by sequentially deciding where to initialize the next trajectory.
In our framework, the state of the system is required to stay within a safety
region for a horizon of $T$ time steps 
under the action of all dynamical systems that (i) belong to 
a given initial uncertainty set, and (ii)
are consistent with
the information gathered so far.

\newstuff{For our first set of results, we consider the setting of safely learning a linear dynamical system involving $n$ states.
For the case $T=1$, we present a linear programming-based
algorithm that either safely recovers the true dynamics from at most $n$ trajectories, or certifies that safe learning is impossible.
For $T=2$,
we give a semidefinite 
representation of the
set of safe initial conditions
and show that $\lceil n/2 \rceil$ trajectories generically suffice
for safe learning.
For $T = \infty$,
we provide semidefinite representable inner approximations
of the set of safe initial conditions and
show that one trajectory generically suffices for safe learning. \versionii{Finally, we extend a number of our results to the cases where the initial uncertainty set contains sparse, low-rank, or permutation matrices, or when the dynamical system involves a control input.}

Our second set of results concerns the problem of safely
learning a general class of nonlinear dynamical systems.
For the case $T=1$, we give a second-order cone programming
based representation of the set of safe initial conditions.
For $T=\infty$, 
we provide semidefinite representable
inner approximations to the set of safe initial conditions.
% Under certain assumptions about the initial model uncertainty,
% our inner approximations  full-dimensional inner approximation.
%By safely gathering trajectories using our safe set representations, we  show how one can fit a polynomial model of the nonlinear dynamics that is consistent with the initial uncertainty set and best agrees with the observations.
We show how one can safely collect trajectories and fit a polynomial model of the nonlinear dynamics that is consistent
with the initial uncertainty set and best agrees with the observations. \versionii{We also present extensions of some of our results to the cases where the measurements are noisy or the dynamical system involves disturbances.}
%\versionii{Finally, we extend some of our results to cases where the initial uncertainty set contains sparse or low-rank matrices, the measurements are noisy, or the dynamical system has disturbances or involves a control input.}
}

\end{abstract}

\begin{keywords} learning dynamical systems, safe learning, uncertainty quantification, robust optimization, conic optimization
\end{keywords}

\section{Problem Formulation \newnewstuff{and Outline of Contributions}}

\newnewstuff{In many applications such as robotics, autonomous systems, and safety-critical control, one needs to learn a model of a dynamical system by observing a small set of its trajectories in a safe manner. This model can serve as a tool for making predictions about unobserved trajectories of the system. It can also be used for accomplishing downstream control objectives. Often, an important challenge during the initial stages of learning is that deploying even a conservative learning strategy on a real world system, such as a robot, is fraught with risk. How should the robot be ``set loose" (i.e., initialized) in the real world so that our uncertainty about its dynamics is reduced, but with guarantees that the robot will remain safe (e.g., it does not exit a pre-specified region in state space)? How much more aggressive can our learning strategy get ``on the fly'' as uncertainty is reduced? This interplay between \emph{safety and uncertainty while learning dynamical systems} is the central theme of this paper. We propose a mathematical formulation that captures the essence of this interplay and study the optimization problems that arise from the formulation in several settings.}

%The core task in model-based reinforcement learning \citep{yang2020data,nagabandi2018neural,singh2019learning,lowrey2018plan,venkatraman2016improved,kaiser2019model} is to estimate---from a small set of sampled trajectories---an unknown dynamical system prescribing the evolution of an agent's state given the current state and control input.
%During the initial stages of learning, deploying even a conservative feedback policy on a real robot is fraught with risk, even if the policy achieves high task performance and safe behavior in simulation. How should the robot be ``set loose" in the real world so that the dynamics may be precisely estimated by observing state transitions, but with guarantees that the robot will remain safe? This interplay between \emph{safety and uncertainty while learning dynamical systems} is the central theme of this paper. 

%We view the agent armed with a fixed feedback policy in closed loop over a short duration as an

\versionii{Before we present the mathematical framework of this paper, let us provide some conceptual intuition with \figureref{fig:picture}.
\begin{figure}
\figureconts
{fig:picture}% label for whole figure
{\caption{A conceptual illustration of the safe learning problem.}}% caption for whole figure
{%
\subfigure[The safety region in blue; one safe and two potentially unsafe initialization points given uncertainty over dynamics][c]{%
\label{fig:picture1}% label for this sub-figure
\includegraphics[width=.5\textwidth -.5em]{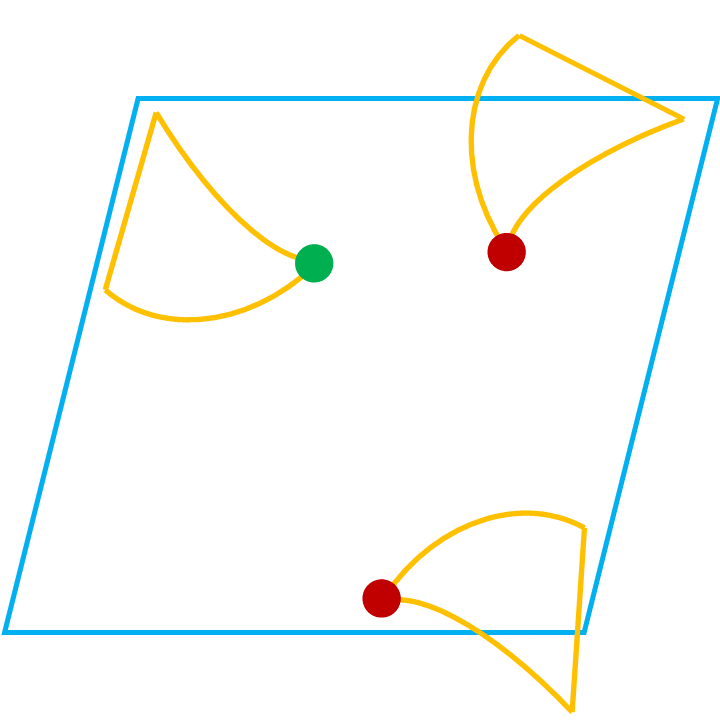}
} % space out the images a bit
\subfigure[After safely observing one trajectory, uncertainty reduces and a previously unsafe initialization point becomes safe to query][c]{%
\label{fig:picture2}% label for this sub-figure
\includegraphics[width=.5\textwidth -.5em]{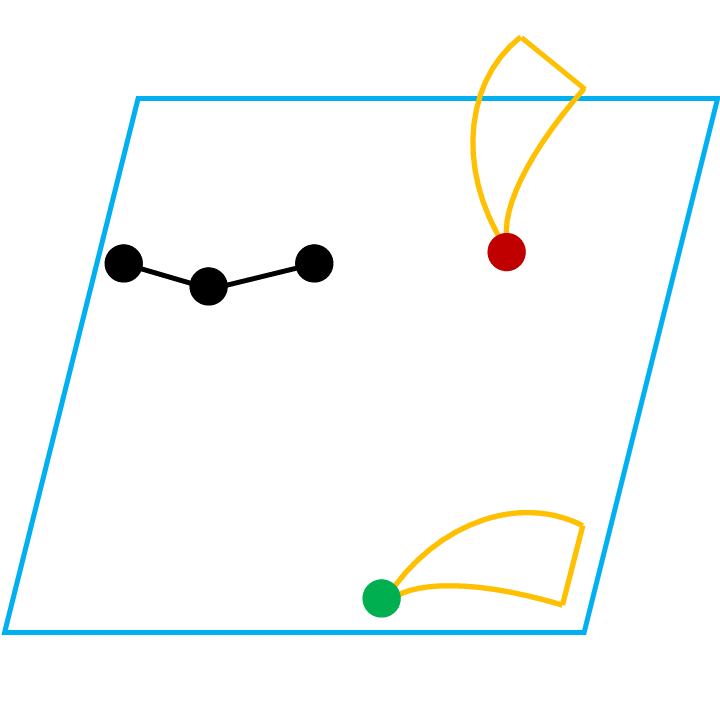}
}
}
\end{figure}
In this figure, the blue parallelogram represents the boundary of a safety region in which the trajectory of the dynamical system we wish to learn must stay throughout the learning process. In \figureref{fig:picture1}, we draw three points as examples of possible initializations of this dynamical system. If we choose one of these points and observe the resulting trajectory of the system, we can use our observations to learn more about the system parameters.
The safety constraint in this context means we must ensure the trajectory remains in the safety region up to a given horizon. If we truly knew nothing about the dynamics, then this task would be impossible since for any initialization, we could imagine some dynamics which would quickly take us out of the safety region.
If we suppose, however, that we have some initial information on the system, we can find sets representing the possible trajectories of all systems consistent with our information.
For our three candidate points, these sets are drawn in orange.
We should not initialize the system at either of the two red points since the system may take their associated trajectories outside the safety region.
The green point is ``safe'' to initialize from since its trajectory must stay in the safety region despite our uncertainty over the dynamics.

In \figureref{fig:picture2}, we imagine having safely observed a trajectory up to our given horizon initialized at the green point from \figureref{fig:picture1}. We can use the information from this trajectory (drawn in black) to learn more about the system and reduce our uncertainty over its potential trajectories.
This is represented by the orange sets being smaller than previously.
With this narrowed uncertainty, a previously unsafe initialization point now becomes safe to query as denoted by the green color in \figureref{fig:picture2}. Given this certification, we could then initialize the system at the green point, observe a new trajectory, and continue learning more about the system.

We now describe the safe learning problem more formally. }\newnewstuff{The central object of our mathematical framework is a discrete-time dynamical system
% In our formulation, we are concerned with a
% discrete-time dynamical system 
\begin{equation}\label{eq:dynamics}
x_{t+1} = \truef(x_t),
\end{equation}
\newnewstuff{where $\truef: \R^n \rightarrow \R^n$ is an \emph{unknown} map.
This could be either a naturally arising autonomous system, or a 
closed-loop control system with a fixed feedback policy. Our interest is in the} problem of safe data acquisition for estimating the unknown map $\truef$ from a collection of length-$T$ trajectories} $\{\phi_{\truef, T}(x_j)\}_{j=1}^{m}$,
where 
$\phi_{f,T}(x) \defn ( x, f(x), \dots, f^{(T)}(x) )$.

In our setting, we are given as input a set $S \subset \R^n$, called the \emph{safety region}, in which the state should remain throughout the learning process. 
\newnewstuff{We say that a\versionii{n initial} state $x$ is \emph{$T$-step safe} under a map $f: \R^n \rightarrow \R^n$ if $f^{(i)}(x) \in S$ for all $i=0, \ldots, T$. We define $S^T(f) \subseteq S$ to be the set of states that are $T$-step safe under $f$.}
%\ac{We use "states" up until this point and then switch to "points;" could it be confusing?}
% $$S^T(f) \defn \{ x\in S \mid f^{(i)}(x) \in S, i=1, \ldots, T\}.$$
In order to safely learn $\truef$, we require that measurements are made only at points in $S^T(\truef)$. 
Obviously, if we make no assumptions about $\truef$, this task is impossible. We assume, therefore, that the map $\truef$ belongs to a set of dynamics $U_0$, which we call the \emph{initial uncertainty set}. As experience is gathered, the uncertainty over $\truef$ decreases. 
Let us denote the uncertainty set after we have observed $k$ trajectories $\{ \phi_{\truef, T}(x_j)\}_{j=1}^{k}$ by,
\[U_k \defn \{f\in U_0 \mid \phi_{f,T}(x_j) = \phi_{\truef,T}(x_j) \:, j=1,\dots,k \}.\]
\newnewstuff{Observe that $U_{k+1} \subseteq U_k$ for all $k$.}
For a nonnegative integer $k$, define
\[S^T_k \defn \bigcap_{f \in U_k} S^T(f) \:, \]
the set of points that are $T$-step safe under all dynamics consistent \newnewstuff{with the initial uncertainty set} and the data after observing $k$ trajectories.
\newnewstuff{We refer to the set $S^T_k$ as the \emph{$T$-step safe set} (the dependence on $k$ is implicit).
Note that $S^T_k \subseteq S^T_{k+1}$ for all $k$.}
\newnewstuff{A primary goal of this paper is to characterize the sets $S^T_k$ as feasible regions of tractable optimization problems.
In certain settings where an exact tractable characterization is not possible, our goal would be to find tractable inner approximations of these sets. For robustness reasons, we would like these inner approximations to be full-dimensional so that safe queries to the system can be made while tolerating perturbations which may arise during implementation. 
}

\newnewstuff{A secondary goal of this paper is to provide algorithms for what we define as the \emph{$T$-step safe learning problem}.}
Fix a scalar $\bar{\varepsilon}>0$ \newnewstuff{and a norm $||.||$ on $\mathbb{R}^n$}.
Given a safety region $S\subset~\R^n$ and an initial uncertainty set $U_0$, the $T$-step safe learning problem (up to accuracy $\bar{\varepsilon}$ and with respect to norm $||.||$) is
to sequentially choose vectors $x_1,\dots,x_m$,
for some nonnegative integer $m$, such that:
\begin{enumerate}
    \item \textbf{(Safety)} for each $k=1, \dots, m$, $x_{k} \in S^T_{k-1}$,
    \item \textbf{(Learning)} $\sup_{f \in U_m, x \in S^T(\truef)} \norm{f(x) - \truef(x)} \leq \bar{\varepsilon}$.
\end{enumerate}
\newnewstuff{
% The metric $d$ should naturally only compare function values on the true safety set $S^T(\truef)$;
% an example is $d(f_1,f_2) = \sup_{x \in S^T(\truef)} \norm{f_1(x) - f_2(x)}_{\infty}$.
%
If for a given $T$, no such sequence of vectors
$x_1, \ldots, x_m$ exists (for any $m$), we say that
$T$-step safe learning is impossible.
Note that if $T$-step safe learning is possible, then
$T'$-step safe learning is also possible for any 
$T' < T$.
%
% In full generality, this task may be impossible; e.g. if $U_0$ is the set of continuous functions, for any set of observations there are infinitely many functions which agree on the observed points.
% In cases where we cannot solve this problem, we would like to collect data safely and leave learning as a downstream task.
%
% emphasize full dimensional
% in full generality this may not be achievable e.g. U0 c1 functions, for any observations there are infinitely many functions which agree on the observed points
% in linear case we solve, in nonlinear we have safe data collection and leave learning as a downstream task
% By construction, the choice of $x_k \in S_{k-1}^T$ is least constrained when $T=1$,
% and most constrained when $T=\infty$;
% i.e., for any $T'>T$, we have $S^{T'}_{k} \subseteq S^T_{k}$ for all $k$.
% Hence, any $T'$-step safe learning algorithm is also a $T$-step safe learning algorithm,
% albeit safe learning may only be possible with $T$ and not $T'$.
Moreover, since the highest rate of safe information assimilation is achieved when $T=1$, to prove that safe learning is impossible for any $T$, it is necessary and sufficient to prove its impossibility for $T=1$.
% Hence, if $T$-step safe learning is impossible, then $T'$-step safe learning is also impossible.
}

% \newnewstuff{At the other extreme, when $T$ is large
% and possibly infinite, the choice of $x_k$ is non-trivially constrained by the uncertainty set $U_k$.}
\newnewstuff{
In many situations, the choice of the sequence of $\{x_1, \dots, x_m \}$ that achieves $T$-step safe learning may not be unique.
We further suppose that for a function $c: \Rn \mapsto \R$ that takes nonnegative values over $S$,} initializing the unknown system at a state $x\in S$ comes at a cost of $c(x)$. 
In such a setting, we are interested in safely learning \newnewstuff{the dynamical system at minimum total \versionii{initialization} cost.
Ideally, we wish to minimize $\sum_{k=1}^m c(x_k)$ 
over sequences $\{x_1, \dots, x_m \}$ that satisfy the safe learning conditions 1 and 2 above.
However, such an optimization problem cannot be solved
without knowing the action of the true dynamics
$\truef$ on the initialization points $\{x_k\}$ ahead of time. 
Hence, a natural online algorithm 
is to sequentially solve the following greedy optimization problem
}
% To do this, one natural \newnewstuff{greedy algorithm is to solve optimization problems of the type:}
\begin{equation}
\min_{x \in S^T_{k-1}} c(x) \:, \label{eq:greedy_safe}
\end{equation}
whose optimal solution gives the next cheapest $T$-step safe \newnewstuff{initialization} point $x_{k}$,
given information gathered before time $k$. \newnewstuff{A byproduct of our primary goal of characterizing the sets $S_k^T$ tractably is efficient algorithms
for solving the optimization problem \eqref{eq:greedy_safe}.}

% \newnewstuff{:
% Talk about our goal of achieving a full-dim safety
% set as a proxy for learning.}

\newnewstuff{
\paragraph{Contributions.}
In this paper, we derive tractable conic programs that exactly characterize or inner approximate $T$-step safe sets (for any $k$) for both linear systems and a general class of nonlinear systems in the extreme cases when $T=1$ and $T=\infty$.
For linear systems, we also address the case when $T=2$,
and provide algorithms for
solving the exact (i.e., $\bar{\varepsilon}=0$) 
$T$-step safe learning problem when $T=1,2,\infty$. Throughout the paper, we assume that the safety region $S$ is a polyhedron.

More specifically, for linear systems, we give an exact linear programming-based characterization of the one-step safe set when $U_0$ is a polytope, and an exact semidefinite programming-based characterization of the two-step safe set when $U_0$ is an ellipsoid.
Based on the former characterization,
%we first give exact characterizations of the safe sets when $T=1$ and 
%$U_0$ is a polytope, and $T=2$ and $U_0$ is an ellipsoid.
%
%$T=\infty$ for both linear and nonlinear systems, in addition to the case of $T=2$ for linear systems. 
%Based on these characterization, 
we present a linear programming-based algorithm that either
learns the unknown dynamics by making at most $n$ one-step safe queries,
or certifies the impossibility of safe learning (for any $T$).
\versionii{This results demonstrates that safety requirements do not hinder the possibility to identify an $n$-dimensional system in $n$ steps.}
In the case of $T=2$, we show that under mild assumptions, $\lceil \frac{n}{2} \rceil$ trajectories (whose initializations are computed by semidefinite programming) suffice for safe learning.
Roughly speaking, these algorithms sequentially solve \eqref{eq:greedy_safe} and add appropriate safe perturbations to ensure that the remaining uncertainty $U_k$ is shrinking. When $T=\infty$,
under the assumption that $U_0$ is a compact subset of Schur-stable
matrices, we present a sum of squares hierarchy of semidefinite programs
that provide full-dimensional inner approximations of the infinite-step safe set.
%
% % We get full-dim set, and can then learn from one trajectory generically
% This stems from a sum-of-squares hierarchy of semidefinite programs that
% provide an inner approximation to the infinite-step safety set that has full dimension.
%
Under mild assumptions, we show that a single trajectory randomly initialized
from our inner approximation suffices for safe learning.
\versionii{Finally, we extend some of our results for one-step safe learning to the case where more specialized side information is available, as well as to systems involving an affine control law. More specifically, we give an exact linear/semidefinite programming-based characterization of the set of one-step safe points of a linear system in the case where the governing matrix is known to be sparse, low-rank, or a permutation matrix. We also define a notion of controlled safe learning and show that this property can again be checked by linear programming.}

%we show that when $U_0$ give three examples of more specialized side information for which we can still \emph{exactly} characterize the $T$-step safe set of a linear system for $T=1$ (or higher in special cases) as the feasible set of a tractable conic program.}
% Using these inner approximation we show that a single trajectory suffices for safe learning.

% we also provide a solution to the $T$-step safe learning problem for $T=1$, 

% and also for $T=2, \infty$ under a certain full-dimensional assumption. 

% In both cases, we consider the extreme points when $T=1$ and
% $T=\infty$. 
%and t=2 in linear

% For linear systems (i.e, when the dynamics in \eqref{eq:dynamics} are linear), we present an
% efficient algorithm
% for the exact one-step safe learning problem (i.e., when $\varepsilon = 0$ and $T=1$).
% Our algorithm works in the setting where the initial uncertainty
% set $U_0$ is a polyhedron
% in the space of $n \times n$ matrices,
% and the safety set $S$ is also a polyhedron.

% Finally, in the case when $T=2$, we also give 
% an exact solution to the safe learning problem 
% under the assumption that $U_0$ is an ellipsoid.
% % we learn from n/2 trajectories generically

% When $T=\infty$, we give semidefinite representable inner approximations of the infinite-step safe set
% when $U_0$ is a subset of Schur-stable
% matrices and $S$ contains the origin in its
% interior. 
% % We get full-dim set, and can then learn from one trajectory generically
% This yields a sum-of-squares hierarchy of efficient 
% problems that certify if infinity-step safe learning is possible.

Turning to nonlinear systems, we consider the case when
the dynamics in \eqref{eq:dynamics} consists of a
linear term
plus a nonlinear function
with bounded growth. 
% In this setting, we only attempt to
% refine our uncertainty regarding the linear term of the dynamics.
% and fit a parametric model to the nonlinear term
When $T=1$, we give an exact second-order cone programming-based representation of the safe set
when the uncertainty around the linear dynamics is represented
by a polyhedron. 
% as feasible region of a second order cone program
When $T=\infty$, we provide a hierarchy of semidefinite representable inner approximations to the infinite-step safe set.
Under the assumption that
the nonlinear function growth is relatively small compared
to the uncertainty around the linear part of the dynamics, 
we prove that our hierarchy provides a full-dimensional inner approximation.
% Regarding the safe learning problem, we note that in its full generality, the task may be impossible (e.g. if $U_0$ is the set of continuous functions, for any set of observations there are infinitely many functions which agree on the observed points that are arbitrarily far outside of the observations).  
By using our safe set representations, we show how one can safely collect trajectories to 
refine uncertainty regarding the linear term of the dynamics and fit a polynomial model of the nonlinear dynamics that is consistent
with the initial uncertainty set and best agrees with the observations.

\versionii{Finally, we show how some of our tractable conic programming formulations of the $T$-step safe sets can be extended to the cases when the dynamical system has disturbances or the measurements are noisy.
For $T=1$, both for linear and nonlinear systems, we can tolerate both bounded measurement noise and bounded disturbances and still given a exact characterization of the one-step safe set.
For $T=\infty$, both for linear and nonlinear systems, we can tolerate bounded measurement noise and still give tractable inner approximations of the infinite-step safe set.}

%safely gather trajectories to  refine uncertainty regarding the linear term of the dynamics and fit a parametric model to the nonlinear part. We show how one can safely collect trajectories and fit a polynomial model of the nonlinear dynamics that is consistent with the initial uncertainty set and best agrees with the observations.

% \newnewstuff{: this does not imply that the learning
% problem as written is solvable, since the learning problem 
% is about system identification, rather than prediction error.
% in the linear systems case, sysid and prediction error are more
% or less equivalent. but in this nonlinear model, the best you can
% hope for is good prediction error, you cannot in general hope
% for recovery because it is not necessarily identifiable under this
% model.}

\paragraph{Outline.}
In \Cref{sec:relation_work}, we cover the relevant literature
around safe learning and control.
\Cref{sec:one-step},
\Cref{sec:linear:two-step}, and
\Cref{sec:linear} present our results and algorithms
for safely learning linear systems when $T=1,2,\infty$, respectively.
\Cref{sec:nonlinear} and \Cref{sec:inf_nonlinear} contain our results
for nonlinear systems when $T=1$ and $T=\infty$, respectively.
\versionii{In \sectionref{sec:side info}, we present our results on initial uncertainty sets containing sparse, low-rank, and permutation matrices. In Section~\ref{sec: controlled}, we define a notion of controlled safe learning and present an efficient algorithm for checking this property.}
Future directions for research are presented in \Cref{sec:conclusion}, \versionii{including extensions to continuous-time systems}.
\versionii{Omitted proofs of some technical statements can be found in \Cref{sec:appendix}.}
}

% (\algorithmref{alg:one-step}
% and \theoremref{thm:one-step}).

% One of the main contributions of this paper is to present an efficient algorithm
% for the exact one-step safe learning problem (i.e., when $\varepsilon = 0$ and $T=1$) in the case where 
% the dynamics in \eqref{eq:dynamics} are linear, $U_0$ is a polyhedron
% in the space of $n \times n$ matrices that define the dynamics,
% and $S$ is a polyhedron (\algorithmref{alg:one-step}
% and \theoremref{thm:one-step}).

% Another contribution of this paper is to 
% derive exact reformulations of problem \eqref{eq:greedy_safe}, when $T \in \{1, 2\}$, in terms of tractable conic optimization problems. More specifically, under natural assumptions on $S$ and $U_0$,
% when the unknown dynamics are linear,
% we show that problem \eqref{eq:greedy_safe} can be formulated as a linear program when $T=1$ (\theoremref{thm:one-step LP}) and as a semidefinite program when $T=2$ (\theoremref{thm:two-step}).
% Furthermore, when $T=1$ and the unknown dynamics are nonlinear (but bounded in a certain sense), we show that \eqref{eq:greedy_safe} can be formulated as
% a second-order cone program (\theoremref{thm:nonlinear socp}). 
% %
% \newstuff{Finally, the case when $T=\infty$
% is addressed via inner approximations, in both the linear (Theorem~\ref{thm:linear}) and nonlinear case
% (Theorem~\ref{thm:inf_nonlinear}),
% using the set invariance tools of \cite{ahmadi18rdo}.
% %
% All omitted proofs can be found in the appendix.}

\section{Related Work}
\label{sec:relation_work}

The idea of using conic and robust optimization techniques for verifying various properties of a known dynamical system has been the focus of much
research in the control and optimization communities \citep{pablothesis,
   lasserre_book_2010, boyd_book_1994, PabloGregRekha_BOOK}. Our work borrows some
of these techniques to instead learn a dynamical system from data subject to certain safety constraints.
\versionii{Learning dynamical systems from data is an important problem in the field of system
identification; see, e.g.,~\cite{aastrom1971system,Antoulas,keesman2011system,brunton2019data}, and references therein.
This continues to be an active area of research, even for linear systems; see, e.g., the recent paper \cite{moitra23}.}

The problem of additionally accounting for safety constraints
during system identification has recently gained attention; see, e.g.,~\citet{brunke21survey} for an excellent
survey of this growing research field.
% \st{TODO: cite more techniques and papers}
Here, we highlight a few of the key technical tools used:
Gaussian process models \citep{akametalu14safeGP,berkenkamp2015safe,berkenkamp17safeRL},
control barrier functions~\citep{cheng2019safe,taylor2020learning,luo21safenonlinear},
set invariance and uncertainty propagation~\citep{artstein2008invariance,gurriet2019invariance,koller19learning},
``safety critics'' in reinforcement learning~\citep{zhang2020rl,bharadhwaj21safetycritics},
and backup controllers~\citep{mannucci2018safeexploration,wabersich2021safety}.
% While varying in technique, the commonality amongst the works in 
% safe learning is that due to the generality of the problem formulation,
% these techniques will inherently be conservative when applied to linear
% dynamical systems.

We highlight two related works for safely learning linear
dynamical systems that, similar to our work, rely on optimization formulations to
ensure safety.
The first is the work of \citet{dean19safelqr},
which uses convex programming methods to approximate the solution to a
finite-time horizon linear-quadratic optimal control problem
with both model uncertainty and state/action constraints. 
\versionii{They provide convex optimization based sufficient conditions for existence of a control law that keeps a given initial condition safe up to a certain time horizon despite uncertainty and disturbance in the dynamics.
However, their approach requires the initial condition to be fixed and does not characterize the set of initial conditions for which there exists a control law ensuring safety (which would be the right generalization of our work to the controlled case; see Section~\ref{sec: controlled}).
Indeed, one can check that if both the control law and the initial condition are decision variables, then the formulation of \citet{dean19safelqr} is no longer convex. By contrast, our work focuses on autonomous systems and characterizes the set of initial conditions that remain safe despite uncertainty in the dynamics. Our characterizations are \emph{exact} for time horizons $T=1$ and $T=2$ and are in state space, while the sets provided by \citet{dean19safelqr} are in control space and in general conservative.
For $T=1$, our exact characterization of safe sets extends to cases including bounded disturbances in the dynamics and noise in the measurements (even for nonlinear systems). Our results for $T=1$ also extend to the problem of \emph{controlled safe learning} of linear control affine systems; see Section~\ref{sec: controlled}.}
In addition, the work in \citet{dean19safelqr} does not provide an implementable algorithm for handling the case when $T=\infty$ since the size of their
convex programs grow with the horizon length $T$.
%\versionii{This is despite the fact that the analysis of that work is implicitly limited to stabilizable cases; our work does not require any stability analysis for finite time horizons.
\versionii{Also, that work does not use information on the fly for learning dynamics, which could be necessary for safe learning (see \sectionref{sec:dim increase}).}
We note that follow-up work \citep{chen2021rmpc,chen2022rmpc}
provides better heuristics for constructing inner approximations to the \versionii{set of safe controls}, but again these approximations are not exact.

The second related paper is that of \citet{lu17safeexploration}.
In this work, the authors address the problem of ``one-step safety''
and ``trajectory safety'' in a probabilistic framework. While similar sounding to our problem setup,
in their work
the initial condition is \versionii{again} \emph{fixed} and the question of either characterizing
or inner approximating the $T$-step safety sets $S_0^T$ is not addressed.
Furthermore, the proposed algorithm in the $T$-step setting requires a separate computation 
to check safety for every time step $t \in \{1, \dots, T\}$, and hence,
similar to \citet{dean19safelqr}, cannot
be implemented in the $T=\infty$ setting.
The algorithm also requires
checking safety for each coordinate of the state separately and relies on nonconvex optimization without optimality guarantees.

\newnewstuff{We would like to also highlight some papers on the topic of learning to stabilize dynamical systems, which has recently gained attention.
The work of \cite{Dean2020} studies linear systems with noise and shows how to learn a stabilizing controller efficiently, both with respect to the number of required samples and cost.
The work of \cite{Werner2023} shows that one can stabilize linear noiseless systems with less data than would be necessary to learn the system parameters exactly.
% We also draw comparison to three papers concerning the data-driven stabilization of linear and polynomial systems.
% Although these papers mostly concern continuous-time systems, the discrete and continuous cases are conceptually related.
The work of \cite{guo} shows how to search for stabilizing controllers and associated Lyapunov functions for all continuous-time polynomial systems that are consistent with collected data.
In a follow-up paper, \cite{BISOFFI2022110537} use Petersen's lemma to further develop the method and prove a necessary and sufficient condition for data-driven stabilization of linear systems.
All these methods lead to stabilized systems wherein the state will remain bounded, however they do not consider explicit safety constraints.
In another follow-up paper, \cite{luppi} incorporate safety constraints into the framework of \cite{guo} and \cite{BISOFFI2022110537} and show how to find approximations of the continuous time analogue of $S^\infty_k$.
This work is concerned with stabilization of polynomial vector fields and is focused on the $T=\infty$ case. The approach is specific to data observation models which lead to ``matrix ellipsoidal'' uncertainty sets (see \cite{BISOFFI2022110537} for a definition) and does not consider the problems of trajectory initialization and its cost. The work is instead focused on the design of controllers which produce invariant subsets of the safety region that are defined as sublevel sets of polynomials. By contrast, the invariant sets that our work produces (for a different class of nonlinear dynamics) are semidefinite representable and hence can be optimized over efficiently. 
%uncertainty sets that take the form of ``matrix ellipsoids'' (see \cite{BISOFFI2022110537} for a definition). This approach does not address the problems of trajectory initialization and its cost.
%Lastly, they find sets described implicitly as sublevel sets of certain functions as opposed to sets which can be directly optimized over as in our work.
}

We end by noting that our work has some conceptual connections to the literature on experiment design \citep[see, e.g.,][]{pukelsheim06doe, decastro19experimentdesign}.
However, this literature typically does not consider dynamical systems or notions of safety.

A much shorter version of this work containing preliminary results on one-step and two-step safe learning has appeared in \citet{ahmadi2021shortpaper}.

\section{One-Step Safe Learning of Linear Systems}\label{sec:one-step}

In this section, we focus on characterizing one-step safe learning
for linear systems.
Here, the state evolves according to 
\begin{equation}\label{eq:linear dynamics}
x_{t+1} = \trueA x_t,
\end{equation}
where $\trueA$ is an unknown $n \times n$ matrix.
We assume we know that $\trueA$ belongs to a set $U_0 \subset \R^{n \times n}$ that represents our prior knowledge of $\trueA$.
In this section, we take $U_0$ to be a polyhedron; i.e., \begin{equation}\label{eq:U_0 polyhedron}
U_0 = \left\{ A \in \R^{n \times n} \mid  \Tr(V_j^\transpose  A) \leq v_j \quad j = 1, \dots ,s \right\}
\end{equation}
for some matrices $V_1,\dots,V_s \in \R^{n \times n}$ and scalars $v_1,\dots,v_s \in \R$. We also work with a polyhedral representation of the safety region $S$; i.e.,
\begin{equation}\label{eq:S polyhedron}
S = \left \{ x \in \R^n \mid  h_i^\transpose  x \leq b_i \quad i = 1, \dots ,r \right \}
\end{equation}
for some vectors $h_1,\dots,h_r \in \R^n$ and some scalars $b_1,\dots,b_r \in \R$.
We assume that \newnewstuff{initializing the system} at a point $x\in \R^n$ comes at a cost $c^\transpose  x$, for some given vector $c \in \Rn$.
In practice, initialization costs are nonnegative.
\newnewstuff{Since the set $S$ is often compact in applications, one can add a constant term to $c^\transpose  x$ to ensure this requirement without changing any of our optimization problems. We ignore this constant term in our formulations and examples. Our algorithms tractably extend to any semidefinite-representable cost function (see \cite{bental_nemirovski} for a definition) $c:\Rn\mapsto \R$ by replacing the objective function with a new variable $\beta$ and adding the constraint $c(x) \leq \beta$.}

%
% An extension to more general semidefinite representable cost functions
% is possible using tools of conic optimization. 
%\st{ What about more general S and U0?}

We start by finding 
the minimum cost point that is one-step safe under all valid dynamics, i.e.,\ a point $x\in S$ such that $Ax\in S$ for all $A\in U_0$. Once this is done, we gain further information by observing the action $y = \trueA x$ of system \eqref{eq:linear dynamics} on our point $x$, which further constrains the uncertainty set $U_0$. We then repeat this procedure with the updated uncertainty set to find the next minimum cost one-step safe point. 
More generally, after collecting $k$ measurements, 
our uncertainty in the dynamics reduces to the set
\begin{align}
    U_k = \{ A \in U_0 \mid  Ax_j = y_j \quad j = 1,\dots,k\}. \label{eq:uk_linear}
\end{align}
Hence, the problem of finding the next cheapest one-step
safe \newnewstuff{initialization} point \versionii{(i.e., the version of \eqref{eq:greedy_safe} for this specific case)} becomes:
\begin{equation}\label{eq:one-step}
\begin{aligned}
\min_{x\in\R^n} \quad & c^\transpose  x\\
\textrm{s.t.} \quad & x \in S \\
& A x \in S \quad \forall A \in U_k. 
\end{aligned}
\end{equation}

%Notice again that this optimization problem only queries points that are one-step safe and uses the information collected on the fly ($n$ linear equations in each iteration) to reduce the uncertainty on $\trueA$. 

In \sectionref{sec:one-step:duality}, we show that
problem \eqref{eq:one-step} can be efficiently solved.
We then use \eqref{eq:one-step}
as a subroutine in a one-step safe learning algorithm which 
we present in \sectionref{sec:one-step:algorithm}.

\subsection{Reformulation via Duality}
\label{sec:one-step:duality}

In this subsection, we reformulate problem \eqref{eq:one-step} as a linear program.
To do this, we introduce auxiliary variables $\mu_j^{(i)} \in \R$ and \versionii{$\eta_\ell^{(i)} \in \R^n$ for $i=1,\dots,r$, $j=1,\dots,s$, and $\ell=1,\dots,k$.
\begin{proposition}\label{prop:one-step LP}
The feasible set of problem \eqref{eq:one-step} is the projection to $x$-space of the feasible set of the following linear program:
\begin{equation}\label{eq:one-step LP}
\begin{aligned}
\min_{x,\mu,\eta} \quad & c^\transpose  x\\
\textrm{s.t.} \quad & h_i^\transpose  x \leq b_i \quad i = 1, \dots ,r \\
& \sum_{\ell=1}^{k} y_\ell^\transpose  \eta_\ell^{(i)} + \sum_{j=1}^{s} \mu_j^{(i)} v_j \leq b_i \quad i = 1, \dots ,r \\
& x h_i^\transpose  = \sum_{\ell=1}^{k} x_\ell \eta_\ell^{(i)\transpose } + \sum_{j=1}^{s} \mu_j^{(i)} V_j^\transpose  \quad i = 1, \dots ,r \\
& \mu^{(i)} \geq 0 \quad i = 1, \dots ,r.
\end{aligned}
\end{equation}
% We say that these two programs are \emph{equivalent} if they have the same optimal value, if for each $x$ feasible to \eqref{eq:one-step}, there exists values of $\mu$ and $\eta$ such that $(x,\mu,\eta)$ is feasible to \eqref{eq:one-step LP}, and for all $(x,\mu,\eta)$ feasible to \eqref{eq:one-step LP}, $x$ is feasible to \eqref{eq:one-step}.
In particular, the optimal values of \eqref{eq:one-step} and \eqref{eq:one-step LP} are the same and the optimal solutions of \eqref{eq:one-step} are the optimal solutions of \eqref{eq:one-step LP} projected to $x$-space.
\end{proposition}}
\myproof{
Using the definitions of $S$ and $U_0$, let us first rewrite \eqref{eq:one-step} as a bilevel program:
\versionii{
\begin{equation}\label{eq:one-step bilevel}
\begin{aligned}
\min_{x} \quad & c^\transpose  x\\
\textrm{s.t.} \quad & h_i^\transpose  x \leq b_i \quad i = 1, \dots ,r \\
& \begin{bmatrix} \max_A \quad & h_i^\transpose  A x  \\ 
\textrm{s.t.} \quad & \Tr(V_j^\transpose  A) \leq v_j \quad j = 1, \dots ,s \\
& Ax_\ell = y_\ell \quad \ell=1, \dots, k \end{bmatrix} \leq b_i \quad i = 1, \dots ,r.
\end{aligned}
\end{equation}}
We proceed by taking the dual of the $r$ inner programs, treating the $x$ variable as fixed.
\versionii{By introducing dual variables $\mu_j^{(i)}$ and $\eta_\ell^{(i)}$ for $i=1,\dots,r$, $j=1,\dots,s$, and $\ell=1,\dots,k$, and by invoking strong duality of linear programming, we have

\begin{equation}\label{eq:one-step duality}
\begin{bmatrix} \max_A \: & h_i^\transpose  A x  \\ 
\textrm{s.t.} \: & \Tr(V_j^\transpose  A) \leq v_j \: j = 1, \dots ,s \\
& Ax_\ell = y_\ell \: \ell=1, \dots, k \end{bmatrix} = 
\begin{bmatrix} \min_{\mu^{(i)},\eta^{(i)}} \: & \sum_{\ell=1}^{k} y_\ell^\transpose  \eta_\ell^{(i)} + \sum_{j=1}^{s} \mu_j^{(i)} v_j \\ 
\textrm{s.t.} \: & x h_i^\transpose  = \sum_{\ell=1}^{k} x_\ell \eta_\ell^{(i)\transpose } + \sum_{j=1}^{s} \mu_j^{(i)} V_j^\transpose  \\
& \mu^{(i)} \geq 0 \end{bmatrix} 
\end{equation}}
for $i=1,\dots,r$.
Thus by replacing the inner problem of \eqref{eq:one-step bilevel} with the right-hand side of \eqref{eq:one-step duality}, the min-max problem \eqref{eq:one-step bilevel} becomes a min-min problem.
This min-min problem can be combined into a single minimization problem and be written as problem \eqref{eq:one-step LP}.
% By duality, if the maximization problem in \eqref{eq:one-step duality} is unbounded, then the dual is infeasible; thus, in this case, both \eqref{eq:one-step} and \eqref{eq:one-step LP} will be infeasible.
% If the maximization is bounded, then the optimal value of both problems will be the same.
Indeed, if $x$ is feasible to \eqref{eq:one-step bilevel}, for that fixed $x$ and for each $i$, there exist values of $\mu^{(i)}$ and $\eta^{(i)}$ that attain the optimal value for \eqref{eq:one-step duality} and therefore the triple $(x,\mu,\eta)$ will be feasible to \eqref{eq:one-step LP}.
Conversely, if some $(x,\mu,\eta)$ is feasible to \eqref{eq:one-step LP}, it follows that $x$ is feasible to \eqref{eq:one-step bilevel}. This is because for any fixed $x$ and for each $i$, the optimal value of the left-hand side of \eqref{eq:one-step duality} is bounded from above by the objective value of the right-hand side evaluated at any feasible $\mu^{(i)}$ and $\eta^{(i)}$.
}
\versionii{
\begin{remark}
We note that problem \eqref{eq:one-step LP}
can be modified so that
one-step safety is achieved in the presence of 
bounded disturbances.
That is, suppose that the dynamics were governed by
\[
x_{t+1} = \trueA x_t + w_t,
\]
where $w_t$ represents some potentially adversarial disturbance and $\trueA \in U_0$.
We can still give an exact linear programming-based characterization of the one-step safety set in the case when we have $\|w_t\| \leq W_t$, where $\norm{\cdot}$ is any norm whose unit ball is a polytope and $W_t$ is a given scalar.
For example, if $\norm{\cdot}$ is the infinity norm, the set of one-step safe initialization points after observing $k$ measurements from the disturbed dynamics is the projection to $x$-space of the feasible set of the following linear program:
\begin{align*}
\min_{x,\mu,\eta^+,\eta^-} \quad & c^\transpose  x\\
\textrm{s.t.} \quad & h_i^\transpose  x \leq b_i \quad i = 1, \dots ,r \\
& \sum_{j=1}^s \mu_j^{(i)} v_j + \sum_{\ell=1}^{k}\sum_{\ell'=1}^{n} \eta^{+(i)}_{\ell\ell'} (W_\ell + (y_\ell)_{\ell'} )   \\
&\qquad + \sum_{\ell=1}^{k}\sum_{\ell'=1}^{n} \eta^{-(i)}_{\ell\ell'} (W_\ell - (y_\ell)_{\ell'} ) + W_{k+1} \|h_i\|_1 \leq b_i \quad i = 1, \dots, r \\
& x h_i^\transpose  = \sum_{j=1}^s \mu_j^{(i)} V_j^\transpose  + \sum_{\ell=1}^{k}\sum_{\ell'=1}^{n} \eta^{+(i)}_{\ell\ell'} x_\ell e_{\ell'}^\transpose  - \sum_{\ell=1}^{k}\sum_{\ell'=1}^{n} \eta^{-(i)}_{\ell\ell'} x_\ell e_{\ell'}^\transpose  \quad i = 1, \dots, r \\
& \mu \geq 0, \quad \eta^+ \geq 0, \quad \eta^- \geq 0,
\end{align*}
where the input to the problem is the descriptions of $S$ and $U_0$ ($h_i,b_i$ and $V_j,v_j$) and the measurements $(x_\ell,y_\ell)$ and we have introduced dual variables $\mu_j^{(i)}$ for $i=1,\dots,r$, $j=1,\dots,s$ and $\eta^{+(i)}_{\ell\ell'},\eta^{-(i)}_{\ell\ell'}$ for $i=1,\dots,r$, $\ell=1,\dots,k$ and $\ell' = 1,\dots,n$.
\end{remark}
This is a special case of \Cref{thm:nonlinear socp} which will be shown in \Cref{sec:nonlinear}.
\begin{remark}
We note that problem \eqref{eq:one-step LP}
can be modified so that
one-step safety is achieved in the presence of 
bounded measurement noise.
That is, suppose that instead of directly observing $y_k = \trueA x_k$, we observe
\[
y_k = \trueA x_k + z_k,
\]
where $z_k$ represents the noise in the measurement and $\trueA \in U_0$.
We can still give an exact linear programming-based characterization of the one-step safety set in the case when we have $\|z_k\| \leq Z_k$, where $\norm{\cdot}$ is any norm whose unit ball is a polytope and $Z_k$ is a given scalar.
For example, if $\norm{\cdot}$ is the infinity norm, the set of one-step safe initialization points after observing $k$ noisy measurements is the projection to $x$-space of the feasible set of the following linear program:
\begin{align*}
\min_{x,\mu,\eta^+,\eta^-} \quad & c^\transpose  x\\
\textrm{s.t.} \quad & h_i^\transpose  x \leq b_i \quad i = 1, \dots ,r \\
& \sum_{j=1}^s \mu_j^{(i)} v_j + \sum_{\ell=1}^{k}\sum_{\ell'=1}^{n} \eta^{+(i)}_{\ell\ell'} (Z_\ell + (y_\ell)_{\ell'} )   \\
&\qquad + \sum_{\ell=1}^{k}\sum_{\ell'=1}^{n} \eta^{-(i)}_{\ell\ell'} (Z_\ell - (y_\ell)_{\ell'} ) \leq b_i \quad i = 1, \dots, r \\
& x h_i^\transpose  = \sum_{j=1}^s \mu_j^{(i)} V_j^\transpose  + \sum_{\ell=1}^{k}\sum_{\ell'=1}^{n} \eta^{+(i)}_{\ell\ell'} x_\ell e_{\ell'}^\transpose  - \sum_{\ell=1}^{k}\sum_{\ell'=1}^{n} \eta^{-(i)}_{\ell\ell'} x_\ell e_{\ell'}^\transpose  \quad i = 1, \dots, r \\
& \mu \geq 0, \quad \eta^+ \geq 0, \quad \eta^- \geq 0
\end{align*}
where the input to the problem is the descriptions of $S$ and $U_0$ ($h_i,b_i$ and $V_j,v_j$) and the measurements $(x_\ell,y_\ell)$ and we have introduced dual variables $\mu_j^{(i)}$ for $i=1,\dots,r$, $j=1,\dots,s$ and $\eta^{+(i)}_{\ell\ell'},\eta^{-(i)}_{\ell\ell'}$ for $i=1,\dots,r$, $\ell=1,\dots,k$ and $\ell' = 1,\dots,n$.
\end{remark}

We note that we can also exactly characterize one-step safety sets in the presence of both disturbances and noisy measurements.}

\subsection{An Algorithm for One-Step Safe Learning}
\label{sec:one-step:algorithm}

We start by giving a mathematical definition of (exact) safe learning specialized to the case of one-step safety and linear dynamics.
Recall the definition of the set $U_k$ in \eqref{eq:uk_linear}.
\begin{definition}[One-Step Safe Learning]
\label{def:one-step}
We say that one-step safe learning is possible if for some nonnegative integer $m$, we can sequentially choose vectors $x_k \in S$, for $k=1, \ldots, m$, and observe measurements $y_k = \trueA x_k$ such that:
\begin{enumerate}
    \item \textbf{(Safety)} for $k=1,\dots,m$, we have $A x_{k} \in S \quad \forall A \in U_{k-1}$,
    \item \textbf{(Learning)} the set of matrices $U_m$ is a singleton.
\end{enumerate}
\end{definition}

% \begin{remark}
% %\st{: Do we still want to make this remark, given we already said it in the intro?}
% As noted in the introduction, for a given safety region $S$ and uncertainty set $U_0$, if one-step safe learning is impossible, then one cannot safely learn the system with trajectories of length greater than one either.
% \end{remark}

%We now present the following algorithm
%for checking the possibility of one-step safe learning.
%In addition to \theoremref{thm:one-step LP},
%\algorithmref{alg:one-step} relies on the following two subroutines.
%

We now present our algorithm for checking the possibility of one-step safe learning (\algorithmref{alg:one-step}).
The proof of correctness of
\algorithmref{alg:one-step} is given in \theoremref{thm:one-step}.

\begin{remark}
As \theoremref{thm:one-step} will demonstrate, the particular choice of the parameter $\varepsilon\in(0,1]$ in the input to \algorithmref{alg:one-step} does not affect the detection of one-step safe learning by this algorithm. However, a smaller $\varepsilon$ leads to a lower cost of learning. Therefore, in practice, $\varepsilon$ should be chosen positive and as small as possible without causing the matrix $X$ in line~\ref{eq:x_mat} to be ill conditioned.
\end{remark}

%
%
%
%\SetAlFnt{\small}
\begin{algorithm2e}[htb]\label{alg:one-step}
\LinesNumbered
\SetKwInOut{Input}{Input}
\SetKwInOut{Output}{Output}
\SetKw{Break}{break}
\DontPrintSemicolon
\Input{polyhedra $S \subset \R^n$ and $U_0 \subset \R^{n \times n}$, cost vector $c \in \R^n$, and a constant $\varepsilon \in (0, 1]$.}
\Output{A matrix $\trueA \in \R^{n \times n}$ or a declaration that one-step safe learning is impossible.}
\For{$k = 0,\dots,n-1$}{
$D_k \gets \{ (x_j, y_j) \mid j=1,\ldots,k\}$\;
$U_k \gets \{ A \in U_0 \mid  Ax_j = y_j, \quad j = 1,\dots,k\}$\;
\If{$U_k$ is a singleton (cf. \lemmaref{lem:uniqueness})}{
\Return the single element in $U_k$ as $\trueA$\; \label{alg:early_return}
}
Let $x_k^\star$ be the projection to $x$-space of an optimal solution to problem \eqref{eq:one-step LP} with data $D_k$\;
\eIf{$x_k^\star$ is linearly independent from $\{x_1,\dots,x_k\}$}{
$x_{k+1} \gets x_k^\star$
}{
Let $S^1_k$ be the projection to $x$-space of the feasible region of problem \eqref{eq:one-step LP} with data $D_k$\;
Compute a basis $B_k \subset S^1_k$ of $\spanof{S^1_k}$ (cf. \lemmaref{lem:basis})\;
\For{$z_j \in B_k$}{
\If{$z_j$ is linearly independent from $\{x_1,\dots,x_k\}$}{
$x_{k+1} \gets (1-\varepsilon)  x_k^\star + \varepsilon z_j$\;
\Break
}
}
\If{no $z_j \in B_k$ is linearly independent from $\{x_1, \dots, x_k\}$}{
\Return one-step safe learning is impossible\; \label{alg:learning_impossible}
}
}
Observe $y_{k+1} \gets \trueA x_{k+1}$\;
}
Define matrix $X = [x_1,\dots,x_n]$ \label{eq:x_mat}\;
Define matrix $Y = [y_1,\dots,y_n]$\;
\Return $\trueA = YX^{-1}$ \label{alg:full_return}
\caption{One-Step Safe Learning Algorithm}
\end{algorithm2e}

\algorithmref{alg:one-step} invokes two subroutines which
we present next in \lemmaref{lem:uniqueness} and \lemmaref{lem:basis}.

\begin{lemma}\label{lem:uniqueness}
Let $A \in \R^{m \times n}$, $B \in \R^{m \times p}$, $c \in \R^m$, and define the polyhedron
\[P \defn \{ x \in \R^n \mid  \exists y \in \R^p \quad \textnormal{s.t.} \quad A x + B y \leq c \}.\]
The problem of checking if $P$ is a singleton can be reduced to solving $2n$ linear programs.
\end{lemma}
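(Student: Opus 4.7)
The plan is to reduce the singleton test to bounding each coordinate of $x$ separately over $P$. The key observation is that a nonempty set $P \subseteq \R^n$ is a singleton if and only if each coordinate projection $\pi_i(P) = \{x_i \mid x \in P\}$ is a single point, which in turn holds if and only if $\max_{x \in P} x_i = \min_{x \in P} x_i$ and both extrema are attained.

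Concretely, for each $i = 1, \dots, n$ I would solve the two linear programs
\begin{align*}
\underline{x}_i &\defn \min_{x, y} \; x_i \quad \text{s.t.} \quad A x + B y \leq c, \\
\overline{x}_i &\defn \max_{x, y} \; x_i \quad \text{s.t.} \quad A x + B y \leq c,
\end{align*}
yielding $2n$ linear programs in total. I would then argue: (a) if any of these LPs is infeasible, then $P = \emptyset$ and is therefore not a singleton; (b) if any LP is unbounded, then $\pi_i(P)$ contains points of arbitrarily large absolute value, so $P$ contains at least two distinct points and is not a singleton; (c) otherwise all $2n$ LPs attain finite optima, and $P$ is a singleton if and only if $\underline{x}_i = \overline{x}_i$ for every $i \in \{1, \dots, n\}$.

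The only direction that requires a short argument is the ``if'' part of (c). Assuming finite $\underline{x}_i = \overline{x}_i$ for every $i$, any $x \in P$ must satisfy $x_i \in [\underline{x}_i, \overline{x}_i] = \{\underline{x}_i\}$, so $x$ is uniquely determined. Feasibility of the LPs guarantees that at least one such $x$ exists, giving exactly one element of $P$. This is essentially the only ``step'' in the proof; the rest amounts to observing that linear programming detects infeasibility and unboundedness, and that all the needed information is captured by the $2n$ coordinate optimizations. I do not anticipate any real obstacle; the entire proof is effectively a standard projection-and-range argument packaged as an algorithm.
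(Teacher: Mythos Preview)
Your proposal is correct and follows essentially the same approach as the paper: for each coordinate $i=1,\dots,n$, maximize and minimize $x_i$ over the lifted feasible set and declare $P$ a singleton if and only if every pair of optimal values coincides. Your treatment is in fact slightly more careful than the paper's, since you explicitly handle the infeasible and unbounded cases that the paper's one-line proof leaves implicit.
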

\myproof{
For each $i=1,\dots,n$, maximize and minimize the $i$-th coordinate of $x$ over $P$.
It is straightforward to check that $P$ is a singleton if and only if the optimal values of these two linear programs coincide for every $i=1,\dots,n$.
}
%
% The next theorem states that we can efficiently find a basis of a
% polyhedron also by solving a sequence of linear programs.
%
\newnewstuff{In the next lemma, the notation $\spanof{P}$ denotes the set of all linear combinations of points in a set $P \subseteq \R^n$ (see the appendix for a proof of this lemma).}
% \st{: rename to lemma and move proof to appendix and fix
% refs everywhere (i.e. change Theorem to Lemma in refs).} 
\begin{lemma}\label{lem:basis}
Let $A \in \R^{m \times n}$, $B \in \R^{m \times p}$, $c \in \R^m$, and define the polyhedron
\[P \defn \{ x \in \R^n \mid  \exists y \in \R^p \quad \textnormal{s.t.} \quad A x + B y \leq c \}.\]
One can find a basis of $\spanof{P}$ contained within $P$ by solving at most $2n^2$ linear programs.
\end{lemma}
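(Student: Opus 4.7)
The plan is to build up a linearly independent subset $\{b_1,\dots,b_k\}\subseteq P$ iteratively until its linear span equals $\spanof{P}$. At the start of iteration $k\ge 1$, we have $k-1$ linearly independent vectors $b_1,\dots,b_{k-1}\in P$; set $V_{k-1}=\spanof{\{b_1,\dots,b_{k-1}\}}$ and compute (by Gram--Schmidt, with no LPs) a basis $w_1,\dots,w_{n-k+1}$ of its orthogonal complement $V_{k-1}^{\perp}$. For each $j=1,\dots,n-k+1$, I would solve the two linear programs $\max\{w_j^T x:Ax+By\le c\}$ and $\min\{w_j^T x:Ax+By\le c\}$ in the variables $(x,y)\in\R^{n+p}$; the $x$-component of any optimizer lies in $P$ by definition of $P$. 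If one of these $2(n-k+1)$ LPs returns a point $p^\star\in P$ with $w_j^T p^\star\ne 0$, then setting $b_k:=p^\star$ preserves linear independence (since $V_{k-1}\subseteq w_j^{\perp}$ but $w_j^T p^\star\ne 0$) and the iteration proceeds. If instead all $2(n-k+1)$ LPs have optimal value zero, then $w_j^T p=0$ for every $p\in P$ and every $j$, so $P\subseteq\bigcap_j w_j^{\perp}=V_{k-1}$; combined with $\{b_1,\dots,b_{k-1}\}\subseteq P\subseteq V_{k-1}=\spanof{\{b_1,\dots,b_{k-1}\}}$, this yields $\spanof{P}=\spanof{\{b_1,\dots,b_{k-1}\}}$, and $\{b_1,\dots,b_{k-1}\}$ is the desired basis contained in $P$.

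For the LP count, each non-terminating iteration enlarges the independent set by exactly one vector, so the algorithm halts after at most $\dim\spanof{P}+1\le n+1$ iterations. Since iteration $k$ uses at most $2(n-k+1)$ LPs, the total is bounded by $\sum_{k=1}^{n+1}2\max(n-k+1,0)=2(n+(n-1)+\dots+1+0)=n(n+1)\le 2n^2$ for $n\ge 1$, matching the claimed budget.

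The main technical nuance will be the case of an unbounded LP. In that event the LP solver certifies a feasible point $v\in\R^{n+p}$ of $\{(x,y):Ax+By\le c\}$ together with a recession direction $r=(r_x,r_y)$ satisfying $w_j^T r_x>0$; then $v+tr$ is feasible for every $t\ge 0$, and for $t$ large enough its $x$-component lies in $P$ and has nonzero $w_j^T$-value, so the extension step still succeeds. The degenerate case $P=\emptyset$ is detected by infeasibility of the first LP, and the empty set is returned as a basis of $\spanof{P}=\{0\}$.
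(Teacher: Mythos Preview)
Your argument is correct and takes a genuinely different route from the paper. The paper homogenizes: given $\{e_1,\dots,e_k\}$, it forms the feasibility system $e_i^T x=0$, $x=x^+-x^-$, $Ax^\pm+By^\pm\le\lambda^\pm c$, $\lambda^\pm\ge0$, and tests whether its $x$-projection is $\{0\}$ by maximizing and minimizing each of the $n$ coordinates of $x$ ($2n$ LPs per iteration, $n$ iterations). When a nonzero $x$ is found, the paper shifts to ensure $\lambda^\pm\ge1$ and then extracts $x^+/\lambda^+$ or $x^-/\lambda^-$ as the new basis element in $P$. You instead optimize $w_j^Tx$ directly over $\{Ax+By\le c\}$ for a basis $\{w_j\}$ of $V_{k-1}^\perp$, take the optimizer (or a point along an unbounded ray) as $b_k$, and terminate when all such LPs have value zero. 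Your approach avoids the homogenization and the extraction step, and your LP count $n(n+1)$ is slightly sharper than the paper's $2n^2$. What the paper's homogenization buys is uniformity: since its feasible set is a cone, each LP is either zero or unbounded, and in the unbounded case the returned recession direction is itself feasible, so no separate case analysis is needed; you pay for the more elementary setup by handling unboundedness explicitly. A minor phrasing point: in your unbounded case, the $x$-component of $v+tr$ lies in $P$ for \emph{every} $t\ge0$ (not just large $t$); it is only the nonvanishing of $w_j^T(v_x+tr_x)$ that requires $t$ large.
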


% We are now in a position to present our 
% one-step safe learning algorithm.

Our next theorem is the main result of the section.
%It states that \algorithmref{alg:one-step} succeeds in recovering
%the true matrix $\trueA$ if and only if one-step safe learning is possible.
\begin{theorem}
\label{thm:one-step}
Given a safety region $S \subset \R^n$ and an uncertainty set $U_0 \subset \R^{n \times n}$, one-step safe learning is possible if and only if \algorithmref{alg:one-step} (with an arbitrary choice of $c \in \R^n$ and $\varepsilon \in (0, 1]$)
returns a matrix.
\end{theorem}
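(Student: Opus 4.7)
The plan is to prove both directions of the biconditional using \theoremref{thm:one-step LP}, which identifies the projection to $x$-space of the feasible region of \eqref{eq:one-step LP} with the one-step safe set $S^1_k$ (a convex polyhedron). A preliminary observation is that every query $x_{k+1}$ chosen by \algorithmref{alg:one-step} lies in $S^1_k$: in the first branch $x_{k+1} = x_k^{\star}$ is directly a feasible point, and in the second branch $x_{k+1} = (1-\varepsilon) x_k^{\star} + \varepsilon z_j$ is a convex combination of two elements of $S^1_k$ and hence lies in $S^1_k$ by convexity. This guarantees the Safety condition of Definition \ref{def:one-step} throughout the execution.

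For the forward direction, suppose \algorithmref{alg:one-step} returns a matrix. If it returns at line \ref{alg:early_return}, then $U_k = \{\trueA\}$ is a singleton and the returned matrix equals $\trueA$ after at most $k \le n-1$ safe queries. If it returns at line \ref{alg:full_return}, then by construction each $x_{k+1}$ is linearly independent from $\{x_1, \dots, x_k\}$: either $x_k^{\star}$ is checked explicitly to be linearly independent, or the perturbation $(1-\varepsilon) x_k^{\star} + \varepsilon z_j$, with $\varepsilon \in (0,1]$ and $z_j$ linearly independent from the previous $x_i$'s, is itself linearly independent from them. Hence $X = [x_1 \mid \cdots \mid x_n]$ is invertible, and $\trueA = Y X^{-1}$ is uniquely recovered, satisfying the Learning condition.

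The more subtle direction is the converse: if the algorithm reaches line \ref{alg:learning_impossible} at some iteration $k$, one-step safe learning is genuinely impossible. At this point $U_k$ is not a singleton (otherwise line \ref{alg:early_return} would have triggered) and no basis element of $\spanof{S^1_k}$ is linearly independent from $\{x_1, \dots, x_k\}$, which implies $\spanof{S^1_k} \subseteq \spanof{\{x_1, \dots, x_k\}}$. I would then prove by induction on $k' \ge k$ that any sequence of further safe queries $x_{k+1}, x_{k+2}, \dots$ must satisfy both $U_{k'} = U_k$ and $\spanof{\{x_1, \dots, x_{k'}\}} = \spanof{\{x_1, \dots, x_k\}}$. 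The inductive step uses the key fact that, since $U_{k'} = U_k$ forces $S^1_{k'} = S^1_k$, any safe query $x_{k'+1} \in S^1_{k'}$ lies in $\spanof{\{x_1, \dots, x_{k'}\}}$; writing $x_{k'+1} = \sum_j \alpha_j x_j$ yields $A x_{k'+1} = \sum_j \alpha_j y_j$ for every $A \in U_{k'}$, so the new observation $y_{k'+1} = \trueA x_{k'+1}$ is already entailed by the previous data and imposes no new linear constraint, giving $U_{k'+1} = U_{k'}$. Since $U_k$ is not a singleton, the Learning condition is unreachable for any $m$, so one-step safe learning is impossible.

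The main obstacle is the converse direction, specifically justifying that the ``one-shot'' check performed by the algorithm at iteration $k$ (comparing $\spanof{S^1_k}$ with $\spanof{\{x_1, \dots, x_k\}}$) rules out every conceivable future safe strategy, including adaptive ones. The induction above resolves this by showing that a safe query outside $\spanof{\{x_1, \dots, x_k\}}$ never becomes available after the algorithm is stuck, while any safe query inside this span is informationally redundant.
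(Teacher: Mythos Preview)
Your forward direction matches the paper's and is in fact slightly more explicit about why each chosen $x_{k+1}$ is linearly independent from its predecessors.

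The converse, however, has a genuine gap. Definition~\ref{def:one-step} quantifies over \emph{all} safe sequences $x_1,\dots,x_m$, not only over continuations of the particular $x_1,\dots,x_k$ produced by the algorithm. Your induction, as stated, rules out only sequences that begin with the algorithm's $k$ queries and then proceed arbitrarily; it does not, on its face, exclude a competing strategy that makes different choices from the outset. The paper closes this gap by taking an \emph{arbitrary} safe sequence $\tilde{x}_1,\dots,\tilde{x}_{\tilde m}$ and proving inductively that every $\tilde{x}_j$ lies in the algorithm's final set $S^1_m$; since $\{x_1,\dots,x_m\}$ spans $S^1_m$, this forces $U_m \subseteq \tilde U_{\tilde m}$, so $\tilde U_{\tilde m}$ cannot be a singleton.

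Your argument is easily repaired along either of two lines. One option is to note that if some fresh sequence $\tilde{x}_1,\dots,\tilde{x}_{\tilde m}$ achieves safe learning, then the concatenation $x_1,\dots,x_k,\tilde{x}_1,\dots,\tilde{x}_{\tilde m}$ is itself safe (because at each stage the uncertainty set for the concatenated sequence is contained in $\tilde U_{j-1}$, so $\tilde{x}_j \in \tilde S^1_{j-1}$ remains safe), and then your induction on extensions yields the contradiction $U_k = U_{k+\tilde m} \subseteq \tilde U_{\tilde m} = \{\trueA\}$. The other option is simply to run your induction on an arbitrary fresh sequence, which is exactly the paper's route. Both rely on the mechanism you correctly identify---a safe query in the span of prior queries is informationally redundant---so the core idea is right; only the quantifier has to be made to match the definition.
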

\myproof{
[``If'']
By construction, the sequence of \newnewstuff{initialization point}s chosen by \algorithmref{alg:one-step} satisfies the first condition of \definitionref{def:one-step}, since the vectors $x_k^\star$ and $z_j$ are both contained in $S^1_k$ and any vector in $S^1_k$ will remain in the safety region under the action of all matrices in $U_k$; i.e. all matrices in $U_A$ that are consistent with the measurements made so far.
If \algorithmref{alg:one-step} terminates early at line~\ref{alg:early_return} for some iteration $k$, then clearly the uncertainty set $U_k$ is a singleton.
On the other hand, 
if we reach line~\ref{alg:full_return}, then we must have $n$
linearly independent \newnewstuff{initialization point}s $\{x_1,\dots,x_{n}\}$.
From this, it is clear that the set $\{ A \in U_0 \mid A x_j = y_j, j = 1, \dots, n \} = \{A_\star\}$.
\par
[``Only if''] Suppose \algorithmref{alg:one-step} chooses points $\{x_1,\dots,x_{m}\}$ where $m < n$ and terminates at line~\ref{alg:learning_impossible}.
Then it is clear from the algorithm that $\{x_1,\dots,x_{m}\}$ must form a basis of $\spanof{S^1_m}$ and that $U_m$ is not a singleton.
Take $\tilde{m}$ to be any nonnegative integer and $\{\tilde{x}_1,\dots,\tilde{x}_{\tilde{m}}\}$ to be any sequence that satisfies the first condition of \definitionref{def:one-step}.
For $k=1, \ldots, \tilde{m}$, let
\begin{align*}
    \tilde{U}_k &= \{ A \in U_0 \mid  A\tilde{x}_j = \trueA \tilde{x}_{j},  j = 1,\dots,k\} \:, \\
    \tilde{S}^1_k &= \{x \in S \mid Ax \in S,  \forall A\in \tilde{U}_k\} \:.
\end{align*}
First we claim that $\tilde{x}_k \in S^1_m$ for $k=1,\dots,\tilde{m}$.
We show this by induction.
It is clear that $\tilde{x}_1 \in S^1_m$ since $\tilde{x}_1 \in S^1_0$ and $S^1_0 \subseteq S^1_m$.
Now we assume $\tilde{x}_1,\dots,\tilde{x}_{k} \in S^1_m$ and show that $\tilde{x}_{k+1} \in S^1_m$.
Since $\{x_1,\dots,x_{m}\}$ forms a basis of $\spanof{S^1_m}$, it follows that for any matrix $A$, $Ax_j=\trueA x_j$ for $j=1,\dots,m$ implies $Ax = \trueA x$ for all $x \in S^1_m$.
In particular, for any matrix $A$, $Ax_j=\trueA x_j$ for $j=1,\dots,m$ implies $A\tilde{x}_{j} = \trueA \tilde{x}_{j}$ for all $j=1,\dots,k$.
It follows that $U_m \subseteq \tilde{U}_k$ and therefore, $\tilde{S}^1_k \subseteq S^1_m$.
By the first condition of \definitionref{def:one-step}, we must have $\tilde{x}_{k+1} \in \tilde{S}^1_k$, and thus, $\tilde{x}_{k+1} \in S^1_m$.
This completes the inductive argument and shows that $\tilde{x}_k \in S^1_m$ for $k=1,\dots,\tilde{m}$.
From this, it follows that $U_m \subseteq \tilde{U}_{\tilde{m}}$.
Recall that $U_m$ is not a singleton, thus $\tilde{U}_{\tilde{m}}$ is not a singleton either.
Therefore, the sequence $\{\tilde{x}_1,\dots,\tilde{x}_{\tilde{m}}\}$ does not satisfy the second condition of \definitionref{def:one-step}.
}

\begin{corollary}
\label{cor:n steps}
Given a safety region $S \subset \R^n$ and an uncertainty set $U_0 \subset \R^{n \times n}$, if one-step safe learning is possible, then it is possible with at most $n$ measurements. 
\end{corollary}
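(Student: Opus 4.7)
The plan is to derive this directly from \theoremref{thm:one-step} by inspecting the structure of \algorithmref{alg:one-step}. If one-step safe learning is possible, then by the ``only if'' direction of \theoremref{thm:one-step}, the algorithm must return a matrix rather than declare impossibility. So it suffices to bound the number of measurements the algorithm makes along any branch that returns a matrix.

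First I would observe that the outer \textbf{for} loop of \algorithmref{alg:one-step} iterates over $k = 0, \dots, n-1$, and each pass through the body performs at most one measurement $y_{k+1} = \trueA x_{k+1}$. Thus any successful run that reaches line~\ref{alg:full_return} uses exactly $n$ measurements (one per iteration of the loop), while any run that returns early at line~\ref{alg:early_return} uses strictly fewer. In either case at most $n$ measurements are made.

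Next I would verify that the sequence of initialization points and measurements produced by the algorithm is a valid witness to one-step safe learning in the sense of \definitionref{def:one-step}. Safety of every $x_k$ was already established in the ``if'' direction of the proof of \theoremref{thm:one-step}, since each $x_k$ lies in $S^1_{k-1}$ (the projection to $x$-space of the feasible region of \eqref{eq:one-step LP}). The learning condition $|U_m| = 1$ holds either because the early-termination test at line~\ref{alg:early_return} passed, or because the $n$ chosen vectors are linearly independent and hence pin down $\trueA$ uniquely via $\trueA = Y X^{-1}$.

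No step is substantively difficult here: the proof is just a bookkeeping observation about the loop bound in \algorithmref{alg:one-step} combined with \theoremref{thm:one-step}. The only point requiring a bit of care is to note that the ``\textbf{break}'' and the linear independence checks inside the loop ensure that exactly one measurement is appended per iteration whenever the algorithm does not terminate, so the measurement counter is bounded by the iteration counter $n$.
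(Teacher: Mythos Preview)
Your proposal is correct and is exactly the argument the paper has in mind: the corollary is stated immediately after \theoremref{thm:one-step} without its own proof, since it follows at once from the loop bound $k=0,\dots,n-1$ in \algorithmref{alg:one-step} together with the equivalence established in \theoremref{thm:one-step}. Your additional bookkeeping about one measurement per iteration and the early-return branch is accurate and simply spells out what the paper leaves implicit.
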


\newnewstuff{Note that if $\trueA$ belongs to the interior of $U_0$, any algorithm needs at least $n$ measurements in order to learn $\trueA$.}

\subsection{The Value of Exploiting Information on the Fly}
\label{sec:cost of learning}

In addition to detecting the possibility of safe learning, \algorithmref{alg:one-step}
attempts to minimize the overall cost of learning (i.e.,\ $\sum_{k=1}^{m} c^\transpose  x_k$) by exploiting 
information gathered at every step.
In order to demonstrate the value of using information online,
we construct \algorithmref{alg:offline-one-step} which 
chooses $n$ \newnewstuff{initialization points} $x_1, \dots, x_n$ ahead of time based solely on $U_0$ and $S$.
This algorithm succeeds under the assumption that $S_0^1$ contains a basis of $\R^n$.
%
%
%We now turn to the question of one-step safe learning with minimum cost.
%
%
% We define the cost of learning to be $\sum_{k=1}^{m} c^\transpose  x_k$, where
% $m$ is the total number of observations made, and $c^\transpose  x$ is the measurement
% cost of querying a point $x$.
%To learn in $n$ steps, we pick $\{ x_1, \dots , x_n \} $; therefore, we can define the total cost $\sum_{k=1}^n c^\transpose  x_k$ to be the \emph{cost of learning}.
\begin{algorithm2e}[ht]\label{alg:offline-one-step}
\LinesNumbered
\SetKwInOut{Input}{Input}
\SetKwInOut{Output}{Output}
\SetKw{Break}{break}
\DontPrintSemicolon
\Input{polyhedra $S \subset \R^n$ and $U_0 \subset \R^{n \times n}$, cost vector $c \in \R^n$, and a constant $\varepsilon \in (0, 1]$.}
\Output{A matrix $\trueA \in \R^{n \times n}$ or failure.}
\If{$S_0^1$ does not contain a basis of $\R^n$ (cf. \lemmaref{lem:basis})}{
\Return failure
}
Compute a basis $\{ z_1, \dots, z_n \} \subset S_0^1$ of $\R^n$\;
Let $x_0^\star$ be the projection to $x$-space of an optimal solution to problem \eqref{eq:one-step LP} with data $D_0$\;
Set $x_k = (1-\varepsilon) x_0^\star + \varepsilon z_k$ for $k=1, \dots, n$\;
Observe $y_k \gets A_\star x_k$ for $k=1, \dots, n$\;
Define matrix $X = [x_1,\dots,x_n]$\;
Define matrix $Y = [y_1,\dots,y_n]$\;
\Return $\trueA = YX^{-1}$
\caption{Offline One-Step Safe Learning Algorithm}
\end{algorithm2e}

% Algorithm~\ref{alg:offline-one-step} presents an offline algorithm which 
% chooses $n$ measurement vectors $x_1, \dots, x_n$ ahead of time based solely on $U_A$ and $S$. 
% %
% This algorithm serves as a baseline for quantifying the 
% advantage in using data online in Algorithm~\ref{alg:one-step}.
%
As $\varepsilon$ tends to zero, the cost of Algorithm~\ref{alg:offline-one-step}
approaches $n c^\transpose  x_0^\star$, where $x_0^\star$ is a minimum cost \newnewstuff{initialization point}
in $S_0^1$;
therefore, $n c^\transpose  x_0^\star$
serves as an \emph{upper bound} on the cost incurred by \algorithmref{alg:one-step}.
We note that $n c^\transpose  x_0^\star$ is also the minimum cost achievable by any one-step safe offline algorithm
that takes $n$ measurements, 
since all \newnewstuff{initialization points} $\{x_k\}$ of such an algorithm must come from $S^1_0$.

We refer the reader to \sectionref{sec:one-step example} for a numerical example comparing \algorithmref{alg:one-step} and \algorithmref{alg:offline-one-step}, and to \sectionref{sec:dim increase} for an example where exploiting online information is necessary for safe learning.
\subsection{A Lower Bound on the Cost of Safe Learning}
\label{sec:one step lower bound}
Consider a safety region $S\subset \Rn$, an initial uncertainty set $U_0 \subset \Rnn,$ and an affine function $c: S \mapsto \R_+$. By assuming knowledge of the matrix $\trueA$ governing the true dynamics, we can express the minimum cost of safe learning (cf.~the paragraph before Eq.~\eqref{eq:greedy_safe}) over all possible (online or offline) algorithms as the optimal value of the following optimization problem:

% By assuming that we know $\trueA$,
% \newnewstuff{
% for fixed sets $S\subset \Rn$, $U_0 \subset \Rnn,$ and affine function $c: S \mapsto \R_+$, we can express the minimum cost of safe learning (cf.~the paragraph before Eq.~\eqref{eq:greedy_safe}) over all possible (online or offline) algorithms as the optimal value of the following optimization problem:
% \begin{equation}
% \begin{aligned}
% \min_{x_1,\dots, x_n \in \Rn} \quad & \sum_{k=1}^n c^\transpose  x_k\\
% \textrm{s.t.} \quad & x_k \in S \quad  k = 1, \dots ,n\\
% &  Ax_1 \in S \quad \forall A \in U_0 \\
% & Ax_2 \in S \quad \forall A \in \{ A \in U_0 \mid  A x_1 = \trueA x_1 \} \\
% & Ax_3 \in S \quad \forall A \in \{ A \in U_0 \mid  A x_1 = \trueA x_1, \quad A x_2 = \trueA x_2 \} \\
% & \vdots \\
% & Ax_n \in S \quad \forall A \in \{ A \in U_0 \mid  A x_k = \trueA x_k \quad k = 1,\dots,n-1 \}.
% \end{aligned}
% \end{equation}
\begin{equation}
\label{eq:onestep lower bound}
\begin{aligned}
\inf_{m \in \mathbb{N},x_1,\dots, x_m \in \Rn} \quad &   \sum_{k=1}^m c(x_k)\\
\textrm{s.t.} \quad & x_k  \in S \quad  k = 1, \dots ,m\\
& Ax_1  \in S \quad \forall A \in U_0 \\
& Ax_2  \in S \quad \forall A \in \{ A \in U_0 \mid  A x_1 = \trueA x_1 \} \\
& Ax_3  \in S \quad \forall A \in \{ A \in U_0 \mid  A x_1 = \trueA x_1, \quad A x_2 = \trueA x_2 \} \\
&  \quad \quad \: \vdots  \\
& Ax_m  \in S \quad \forall A \in \{ A \in U_0 \mid  A x_k = \trueA x_k \quad k = 1,\dots,m-1 \} \\
&  \{ A \in U_0 \mid  A x_k = \trueA x_k \quad k = 1,\dots,m \} = \{ \trueA \}.
\end{aligned}
\end{equation}
For a fixed $m \in \mathbb{N},$ and assuming knowledge of $\trueA$, using a similar duality approach as in the proof of Proposition~\ref{prop:one-step LP}, the membership constraints in \eqref{eq:onestep lower bound} can be written as bilinear constraints in $x_1,\ldots,x_m$ and additional dual variables.
It is unclear however if \eqref{eq:onestep lower bound} can be solved tractably (even for fixed $m$).
Accordingly, we use the following easily computable lower bound on the minimum cost of safe learning for our numerical example in the next section.
Suppose $\trueA$ is in the interior of $U_0$.
% we can also compute a \emph{lower bound} on the cost of one-step safe learning of any algorithm
% that takes $n$ measurements. 
Let $$S^1(\trueA) = \{ x \in S \mid \trueA x \in S \}$$ be the true one-step safety region of $\trueA$.
Suppose $x^\star$ is an optimal solution to the linear program that minimizes $c(x)$ over $S^1(\trueA)$.
Since one-step safe learning requires at least $n$ measurements, we cannot 
achieve a cost lower than $n c(x^\star)$.

% \newnewstuff{
% This lower bound may not be sharp.
% For a fixed $S$, $U_0$ and $\trueA$, the minimum total initialization cost for any algorithm making $n$ measurements can be written as the value of the following optimization problem:
% \begin{equation}
% \label{eq:onestep lower bound}
% \begin{aligned}
% \min_{x_1,\dots, x_n \in \Rn} \quad & \sum_{k=1}^n c^\transpose  x_k\\
% \textrm{s.t.} \quad & x_k \in S \quad  k = 1, \dots ,n\\
% &  Ax_1 \in S \quad \forall A \in U_0 \\
% & Ax_2 \in S \quad \forall A \in \{ A \in U_0 \mid  A x_1 = \trueA x_1 \} \\
% & Ax_3 \in S \quad \forall A \in \{ A \in U_0 \mid  A x_1 = \trueA x_1, \quad A x_2 = \trueA x_2 \} \\
% & \vdots \\
% & Ax_n \in S \quad \forall A \in \{ A \in U_0 \mid  A x_k = \trueA x_k \quad k = 1,\dots,n-1 \}.
% \end{aligned}
% \end{equation}
% Using a similar method as in the proof of \theoremref{thm:one-step LP}, the above optimization problem can be written as a bilinear program.
% It is unclear if this bilevel program can be solved tractably.
% Accordingly, we use the simpler lower bound described previously for our numerical example in the next section.
% }

\subsection{Numerical Example of One-Step Safe Learning}\label{sec:one-step example}
We present a numerical example with $n=4$.
Here, we take $U_0 = \{ A \in \R^{4 \times 4} \mid |A_{ij}| \leq 4 \quad \forall i,j \}$,
$S = \{ x \in \R^4 \mid \norm{x}_{\infty} \leq 1 \}$,
and $c=(-1,-1,0,0)^\transpose $.
We choose the matrix $\trueA$ uniformly at random among integer matrices in $U_0$
\[
\trueA = \begin{bmatrix*}[r]
 2 &  1 &  4 &  2\\
 2 & -3 & -1 & -2\\
-2 & -3 &  1 &  0\\
 2 &  0 & -2 &  2
\end{bmatrix*}.
\]
In this example, \algorithmref{alg:one-step}
takes four steps to safely recover $\trueA$.
The projection to the first two dimensions of the four vectors that \algorithmref{alg:one-step}
selects are plotted in \figureref{fig:one-step S} (note that two of the points are very close to each other).
Because of the cost vector $c$, points higher and further to the right in the plot have lower \newnewstuff{initialization} cost.
Also plotted in \figureref{fig:one-step S} are the projections to the first two dimensions
of the sets $S^1_k$ for $k\in\{0, 1, 2, 3\}$
and of the set $S^1(\trueA)$,
the true one-step safety region of $\trueA$.
In \figureref{fig:one-step U}, we plot $U_k$ (the remaining uncertainty after making $k$ measurements) for $k\in\{0, 1, 2, 3, 4\}$; we draw a two-dimensional projection of these sets of matrices by looking at the trace and the sum of the entries of each matrix in the set.
Note that $U_4$ is a single point since we have recovered the true dynamics after the fourth measurement.

\begin{figure}
\figureconts
{fig:one-step}% label for whole figure
{\caption{One-step safe learning associated with the numerical example in \sectionref{sec:one-step example}.}}% caption for whole figure
{%
\subfigure[$S^1_k$ grows with $k$.][c]{%
\label{fig:one-step S}% label for this sub-figure
\includegraphics[width=.5\textwidth -.5em]{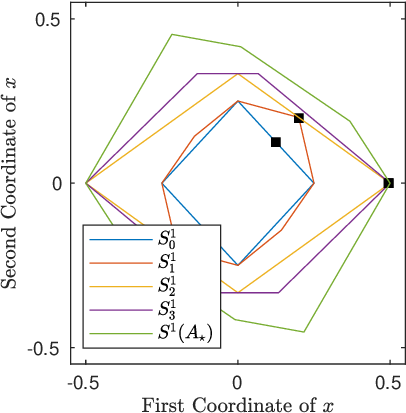}
} % space out the images a bit
\subfigure[$U_k$ shrinks with $k$.][c]{%
\label{fig:one-step U}% label for this sub-figure
\includegraphics[width=.5\textwidth -.5em]{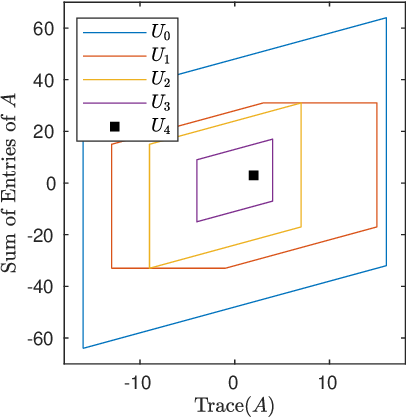}
}
}
\end{figure}

The cost of learning (i.e.,\ $\sum_{i=1}^{4} c_i^\transpose  x_i$) for the offline algorithm (\algorithmref{alg:offline-one-step}) approaches $-1$ as $\varepsilon \rightarrow 0$. The cost of learning for \algorithmref{alg:one-step} (with $\varepsilon=0.01$) is $-1.6385$. The lower bound on the cost of learning is $-2.2264$ (cf.\ \Cref{sec:one step lower bound}).\footnote{Note that adding a constant to the objective function to make it nonnegative over $S$, would shift all of our bounds by the same amount.}
We can see that the value of exploiting information on the fly is significant.
% Here we see that by using information on the fly, we can do significantly better than the offline algorithm, which does not use information on the fly.

\versionii{To show that the above trend is not specific to the example we chose, we repeat the procedure for 100 randomly generated instances of this problem.
We use the same sets $S$ and $U_0$, we sample the matrix $\trueA$ uniformly at random from integer matrices in $U_0$, and we sample the cost vector $c$ uniformly at random from the unit sphere. In all 100 examples, we learn the true system after 4 iterations as guaranteed by Corollary~\ref{cor:n steps}. The cost of learning for \algorithmref{alg:one-step} was on average $-1.2151$ with a standard deviation of $0.4310$.
The cost of learning for \algorithmref{alg:offline-one-step} was on average $-0.7717$ with a standard deviation of $0.1038$.
The lower bound on the cost of learning was on average $-3.1792$ with a standard deviation of $1.2243$.
The box plot in \figureref{fig:boxplots} summarizes the distribution of initialization costs for each iterate.
For later iterates when more has been learned about the system, lower cost initialization points are chosen by the online algorithm.
}

\begin{figure}
\centering
\includegraphics[width=.5\textwidth]{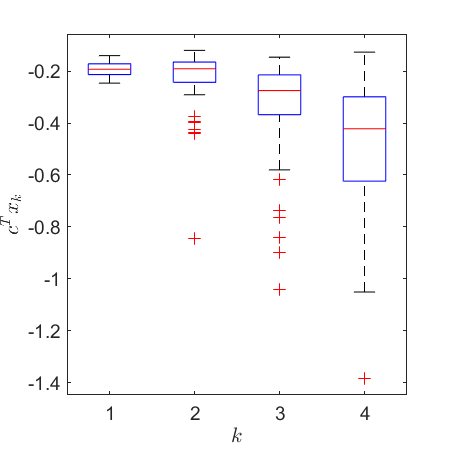}
\caption{\versionii{Initialization cost of the iterates chosen by \algorithmref{alg:one-step} (the online algorithm) for the distribution of four-dimensional problems described at the end of \sectionref{sec:one-step example}. The initialization cost for \algorithmref{alg:offline-one-step} (the offline algorithm) would be $4c^\transpose x_1$.}}
\label{fig:boxplots}
\end{figure}

\newstuff{\subsection{Failure of Offline Learning}
\label{sec:dim increase}

It is natural to ask if in every case that one-step safe learning is possible, whether it is also possible with
an offline algorithm (i.e., an algorithm that can only sample points from $S_0^1$).
In this subsection, we show that this is not the case, demonstrating the necessity of
exploiting information on the fly.

Consider the following example with $n=2$,
\[U_0 = \left \{ A \in \R^{2 \times 2} \bigg\vert \begin{bmatrix}
 1 &  0 \\
 0 & 1 
\end{bmatrix} \leq A \leq \begin{bmatrix}
 2 &  1 \\
 1 & 2 
\end{bmatrix} \right \}
, \quad
S = [1,3]^2,\]
an arbitrary cost vector $c$,
and $\trueA = \begin{bmatrix}
 1 &  0 \\
 1 & 1 
\end{bmatrix}$.
It is straightforward to see that no offline algorithm can recover $\trueA$.
Indeed, one can check that (i) $S_0^1 = \{(1, 1)^\transpose \}$, which does not contain a basis of $\R^2$,
and (ii)
\newnewstuff{
\[
U_1 = \mathrm{conv}\left( \left\{ \begin{bmatrix}
 1 &  0 \\
 0 & 2 
\end{bmatrix},\begin{bmatrix}
 1 &  0 \\
 1 & 1 
\end{bmatrix} \right\} \right)
\]
 which is not a singleton.}

By contrast, \algorithmref{alg:one-step} takes two steps to safely recover $\trueA$, demonstrating
that safe learning is possible. 
Plotted in \figureref{fig:dim increase S} are the sets $S^1_k$ for $k\in\{0, 1\}$ and the set $S^1(\trueA)$, the true one-step safety region of $\trueA$.
In \figureref{fig:dim increase U}, we plot $U_k$ (the remaining uncertainty after making $k$ measurements) for $k\in\{0, 1, 2\}$; we draw a two-dimensional projection of these sets of matrices by plotting the trace and the sum of the entries of each matrix in the set.
As noted previously, $U_1$ is not a singleton as we cannot exactly recover $\trueA$ from measuring its action on the single point in $S_0^1$.
However, $U_2$ is a singleton since we have recovered the true dynamics after the second measurement.
Thus, we see that the value of exploiting information on the fly is significant not just in terms of cost, but in terms of the possibility of learning as well.

\begin{figure}
\figureconts
{fig:dim increase}% label for whole figure
{\caption{One-step safe learning associated with the numerical example in \sectionref{sec:dim increase}.}}% caption for whole figure
{%
\subfigure[$S^1_k$ grows with $k$.][c]{%
\label{fig:dim increase S}% label for this sub-figure
\includegraphics[width=.5\textwidth -.5em]{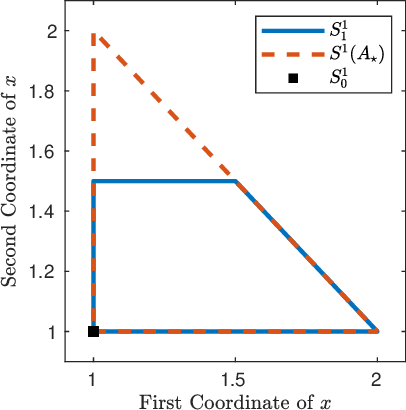}
} % space out the images a bit
\subfigure[$U_k$ shrinks with $k$.][c]{%
\label{fig:dim increase U}% label for this sub-figure
\includegraphics[width=.5\textwidth -.5em]{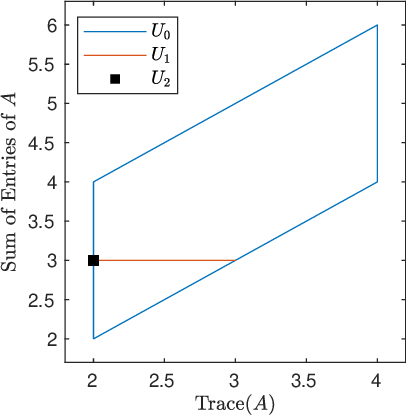}
}
}
\end{figure}

}
\section{Two-Step Safe Learning of Linear Systems}
\label{sec:linear:two-step}

In this section, we again focus on learning the linear dynamics in \eqref{eq:linear dynamics}.
However, unlike the previous section, we are interested in making queries to the system that are two-step safe.
%This means that we would like to query points that are in the safety region and that remain in the safety region for two steps under the action of all dynamics in the uncertainty set that are consistent with the measurements made so far.
\aaa{An} advantage of this formulation is that we may have fewer system resets and can potentially learn the dynamics with lower \newnewstuff{initialization} cost. \aaa{Moreover, it turns out that the robust optimization problem underlying the two-step safe learning problem remains tractable in the setting where the initial uncertainty set is an ellipsoid (in matrix space). We do not anticipate an exact tractable formulation of the $T$-step safe learning problem for $T\geq 3$.} 
\newnewstuff{Our analysis of the $T=2$ case (in addition to the limiting cases of $T=1$ and $T=\infty$) is mainly motivated by the aforementioned tractability reason (see \theoremref{thm:two-step}).}
%\aaa{While the resulting robust optimization problem will turn out to be more involved than that of the previous section, we show that it can still be exactly and tractably solved in the case where the initial uncertainty set is an ellipsoid (in matrix space).}

\aaa{We take the input to the two-step safe learning problem to be a} polyhedral safety region $S \subset \R^n$ given in the form of \eqref{eq:S polyhedron}, an objective function representing \newnewstuff{initialization} cost which for simplicity we again take to be a linear function $c^\transpose  x$, and an uncertainty set $U_0 \subset \R^{n \times n}$ to which \newnewstuff{the matrix} $\trueA$ belongs.
\aaa{We assume the set $U_0$ is an ellipsoid}; this means that there is a strictly convex quadratic function $q: \R^{n \times n} \rightarrow \R$ such that
\[ U_0 = \left \{ A \in \R^{n \times n} \mid  q(A) \leq 0 \right \}.\]
An example of such an uncertainty set is $U_0 =  \{ A \in \R^{n \times n} \mid \norm{A-A_0}_F \leq \gamma\} $, where $A_0$ is a nominal matrix, $\gamma$ is a positive scalar, and $\norm{\cdot}_F$ \newnewstuff{denotes} to the Frobenius norm.
Having safely collected $k$ length-two trajectories $\{ (x_j, \trueA x_j, \trueA^2 x_j) \}_{j=1}^{k}$, 
our uncertainty around $\trueA$ reduces to:
\begin{align}
    U_k = \{ A \in U_0 \mid A x_j = \trueA x_j, A^2 x_j = \trueA^2 x_j, j=1, \dots, k \}. \label{eq:U_k_two_step}
\end{align}
The optimization problem we would like to solve to find the next best two-step safe \newnewstuff{initialization} point \versionii{(i.e., the version of \eqref{eq:greedy_safe} for this specific case)} is the following:
\begin{equation}\label{two-step}
\begin{aligned}
\min_{x} \quad & c^\transpose  x\\
\textrm{s.t.} \quad & x \in S \\
& A x \in S \quad \forall A \in U_k \\
& A^2 x \in S \quad \forall A \in U_k.
\end{aligned}
\end{equation}

\newstuff{The feasible region of \eqref{two-step} is the set of two-step safe points with information available at step $k$ and is denoted, using our convention, by $S^2_k$.}

%Recall that the feasible region of \eqref{two-step} is denoted as $S^2_k$.}

\subsection{Reformulation via the S-Lemma}
In this subsection, we derive a tractable reformulation of problem \eqref{two-step}\newstuff{, which as a consequence results in an efficient semidefinite representation of the set $S^2_k$}.
\newnewstuff{Recall that $n$ denotes the dimension of the state and $r$ denotes the number of facets of the polytopic safety set $S$.}

\begin{theorem}\label{thm:two-step}
Problem \eqref{two-step} can be reformulated as a semidefinite program \newstuff{involving $3r$ scalar inequalities and $2r$ positive semidefinite constraints on matrices of size at most $(n^2 + 1) \times (n^2 + 1)$}.
\end{theorem}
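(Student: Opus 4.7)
The crux is a simple algebraic observation about $U_k$: if $A x_j = \trueA x_j = y_j$, then $A^2 x_j = A(A x_j) = A y_j$, so (given $A x_j = y_j$) the quadratic equality $A^2 x_j = \trueA^2 x_j \defn z_j$ is equivalent to the \emph{linear} equality $A y_j = z_j$. Consequently,
\[
U_k = \{ A \in \Rnn \mid q(A) \le 0,\ A x_j = y_j,\ A y_j = z_j,\ j = 1, \dots, k \},
\]
the intersection of a single ellipsoid in $\Rnn$ with a finite collection of affine hyperplanes. This is precisely the structural property that makes the S-procedure lossless in what follows.

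The next step is to unpack \eqref{two-step} facet by facet. The constraint $x \in S$ contributes the $r$ scalar inequalities $h_i^T x \le b_i$. For each $i = 1, \dots, r$, the constraint $Ax \in S$ for all $A \in U_k$ becomes the robust inequality $\max_{A \in U_k} h_i^T A x \le b_i$, whose objective is affine in the entries of $A$. Similarly, $A^2 x \in S$ for all $A \in U_k$ becomes $\max_{A \in U_k} h_i^T A^2 x \le b_i$, whose objective is a (possibly indefinite) quadratic in the entries of $A$ for fixed $x$. Altogether, there are $2r$ robust subproblems to dualize.

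To dualize, vectorize $a = \mathrm{vec}(A) \in \R^{n^2}$ and homogenize with an extra scalar fixed to $1$, so that every relevant inhomogeneous quadratic form is encoded by a symmetric $(n^2+1) \times (n^2+1)$ matrix. In this form each of the $2r$ maximization subproblems is a QCQP with a single quadratic inequality ($q(A) \le 0$) and several linear equalities. By the S-lemma with equality constraints (equivalently, the classical S-lemma applied after eliminating the equalities on their affine subspace), each subproblem admits a lossless SDP dual consisting of a nonnegative scalar multiplier $\lambda_i$ for the ellipsoid, free multipliers for the equalities, and a single LMI of size $(n^2+1) \times (n^2+1)$ whose entries are linear in $x$ and the dual variables. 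Losslessness requires Slater's condition on the affine subspace of the equalities, which is witnessed by the true matrix $\trueA \in U_k$, assumed to lie in the interior of $U_0$.

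Assembling all pieces yields a semidefinite program in $x$ and the auxiliary dual variables. The scalar inequalities are: $r$ from $x \in S$ plus one nonnegativity constraint on $\lambda_i$ per robust inequality, for $r + 2r = 3r$ total. The PSD constraints are: one $(n^2+1)\times(n^2+1)$ LMI per robust inequality, for $2r$ total, as claimed. The \textbf{main obstacle} is justifying the lossless dualization in the presence of the linear equalities; this reduces, via the elimination argument or directly via the extended S-lemma, to the classical S-lemma with one quadratic inequality and a Slater point, with the Slater point supplied by $\trueA$ itself. The first-paragraph reduction of $A^2 x_j = z_j$ to a linear equality is crucial here, as without it $U_k$ would be cut out by several quadratic constraints, for which the S-procedure would generally be lossy.
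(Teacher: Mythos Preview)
Your proposal is correct and follows essentially the same route as the paper: linearize the quadratic equalities $A^2 x_j = z_j$ to $A y_j = z_j$, then apply the S-lemma facet by facet to the $2r$ robust constraints, counting $r$ facet inequalities, $2r$ nonnegativity constraints on the multipliers, and $2r$ LMIs of size at most $(n^2+1)\times(n^2+1)$.

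The one point where you diverge is the Slater justification: you assume $\trueA$ lies in the interior of $U_0$, which the paper does \emph{not} assume. The paper instead first disposes of the case where $U_k$ is a singleton (then $\trueA$ is known and \eqref{two-step} is a linear program), and in the remaining case uses strict convexity of $q$ restricted to the affine subspace of the equalities: any two distinct points $\bar a_1,\bar a_2$ with $\hat q(\bar a_i)\le 0$ yield $\hat q\big(\tfrac12(\bar a_1+\bar a_2)\big)<0$, furnishing the Slater point without any interior assumption on $\trueA$.
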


Our proof makes use to the S-lemma 
\citep[see\newnewstuff{,} e.g.,][]{s_lemma} which we recall next.
\begin{lemma}[S-lemma]
For two quadratics functions $q_a$ and $q_b$, if there exists a point $\bar{x}$ such that $q_a(\bar{x}) < 0$, then the implication
\[ \forall x, \left[ q_a(x) \leq 0 \Rightarrow q_b(x) \leq 0 \right] \]
holds if and only if there exists a scalar $\lambda \geq 0$ such that
\[\lambda q_a(x) - q_b(x) \geq 0 \quad \forall x .\]
\end{lemma}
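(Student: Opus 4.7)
The plan is to peel off the three membership conditions in \eqref{two-step} separately, producing scalar linear inequalities for the first and applying the S-lemma, once per facet of $S$, to the second and third.

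First, I would replace $x \in S$ by the $r$ scalar linear inequalities $h_i^T x \leq b_i$ coming from the polyhedral description \eqref{eq:S polyhedron} of $S$. Next, I would reduce $U_k$, as defined in \eqref{eq:U_k_two_step}, to the intersection of an ellipsoid with an affine subspace. The key observation is that after observing $y_j = \trueA x_j$ and $z_j = \trueA^2 x_j$, any $A$ satisfying $Ax_j = y_j$ automatically satisfies $A^2 x_j = A y_j$, so the constraint $A^2 x_j = z_j$ collapses to the linear constraint $A y_j = z_j$. Hence
\[
U_k \;=\; \{ A \in \Rnn : q(A) \leq 0 \} \,\cap\, \mathcal{L}_k,
\]
where $\mathcal{L}_k$ is an affine subspace of $\Rnn$. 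Parameterizing $\mathcal{L}_k$ by $A = A_0 + \sum_\ell \Phi_\ell \xi_\ell$ with $\xi \in \R^d$ and $d \leq n^2$, membership in $U_k$ becomes $\tilde q(\xi) \leq 0$ for a strictly convex quadratic $\tilde q$ on $\R^d$.

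Then, for each facet index $i \in \{1,\ldots,r\}$, the robust constraint $Ax \in S$ for all $A \in U_k$ becomes the implication $\tilde q(\xi) \leq 0 \Rightarrow h_i^T (A_0 + \Phi \xi) x - b_i \leq 0$, whose conclusion is affine in $\xi$ with coefficients linear in $x$. Under the Slater condition, the S-lemma gives an equivalent reformulation: there exists $\lambda_i \geq 0$ such that $\lambda_i \tilde q(\xi) - h_i^T(A_0 + \Phi\xi)x + b_i \geq 0$ for all $\xi \in \R^d$. This nonnegativity of a quadratic is an LMI on a matrix of size $(d+1)\times(d+1) \leq (n^2+1)\times(n^2+1)$, whose entries are jointly affine in $(x,\lambda_i)$. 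Similarly, for each $i$, the constraint $A^2 x \in S$ for all $A \in U_k$ produces an implication with a \emph{quadratic} conclusion $h_i^T (A_0 + \Phi \xi)^2 x - b_i \leq 0$, whose matrix representation in $\xi$ has entries linear in $x$. Applying the S-lemma once more yields a dual $\mu_i \geq 0$ and an LMI of the same size whose entries are jointly affine in $(x, \mu_i)$.

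Finally, I would tally up: $r$ scalar inequalities from $x \in S$, plus $r$ inequalities $\lambda_i \geq 0$, plus $r$ inequalities $\mu_i \geq 0$, totaling $3r$ scalar inequalities; and $r$ LMIs from $Ax \in S$ plus $r$ LMIs from $A^2 x \in S$, totaling $2r$ PSD constraints on matrices of size at most $(n^2+1)\times(n^2+1)$. The main obstacle I expect is two-fold: verifying the Slater condition for the reduced problem (i.e., that the affine subspace $\mathcal{L}_k$ meets the strict interior of the ellipsoid $\{q \leq 0\}$, which follows from $\trueA$ lying in the interior of $U_0$), and checking that the quadratic form $h_i^T(A_0+\Phi\xi)^2 x$ admits a matrix representation whose entries are genuinely linear in $x$ (they are, since they involve inner products of the fixed matrices $h_i^T \Phi_\ell \Phi_{\ell'}$ with $x$), so that the resulting condition is a proper LMI rather than a bilinear matrix inequality in the decision variables.
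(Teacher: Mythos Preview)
Your proposal does not prove the stated lemma. The statement you were asked to prove is the S-lemma itself: the equivalence, for two quadratic functions under a Slater-type condition, between the quadratic implication and the existence of a nonnegative multiplier $\lambda$. Your write-up instead sketches a proof of \theoremref{thm:two-step} (the SDP reformulation of the two-step safe set), \emph{using} the S-lemma as a black box. That is a different result entirely, and nothing in your argument establishes either direction of the S-lemma.

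For context, the paper does not prove the S-lemma either; it is quoted from the literature \citep{s_lemma} and invoked as a tool. If your intention was actually to prove \theoremref{thm:two-step}, then your outline is essentially identical to the paper's proof: linearize the trajectory constraints to reduce $U_k$ to an ellipsoid intersected with an affine subspace, parameterize that subspace, verify the strict-feasibility hypothesis via strict convexity of the reduced quadratic, and apply the S-lemma once per facet for each of the two robust membership constraints, counting $3r$ scalar inequalities and $2r$ PSD constraints of size at most $(n^2+1)\times(n^2+1)$. The one nuance the paper handles that you gloss over is the degenerate case where $U_k$ is already a singleton, in which case the Slater condition fails and the problem collapses to an LP; you assume Slater holds via ``$\trueA$ lying in the interior of $U_0$,'' but the paper instead derives it from the non-uniqueness of $U_k$ combined with strict convexity of $\hat q$, which is the correct justification after data has been collected.
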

\myproofof{of \theoremref{thm:two-step}}{
Note that the set of equations
\[Ax_j = \trueA x_j, \quad  A^2 x_j = \trueA^2 x_j \quad j =1,\dots,k\]
in the definition of $U_k$ in \eqref{eq:U_k_two_step} is equivalent to the set of linear equations 
\begin{equation}\label{eq:two-step linear constraints}
    Ax_j = \trueA x_j, \quad  A (\trueA x_j) = \trueA^2 x_j \quad j =1,\dots,k.
\end{equation}
If there is only one matrix in $U_0$ that satisfies all of the equality constraints in \eqref{eq:two-step linear constraints} (a condition that can be checked via a simple modification of \lemmaref{lem:uniqueness}), then we have found $\trueA$ and \eqref{two-step} becomes a linear program.
Therefore, let us assume that more than one matrix in $U_0$ satisfies the constraints in \eqref{eq:two-step linear constraints}.
In order to apply the S-lemma, we need to remove these equality constraints, a task that we accomplish via variable elimination.
Let $\hat{n}$ be the dimension of the affine subspace of matrices that satisfy the constraints in \eqref{eq:two-step linear constraints} and let $\hat{A} \in \R^{n \times n}$ be an arbitrary member of this affine subspace.
Let $A_1,\dots,A_{\hat{n}}  \in \R^{n \times n}$ be a basis of the subspace
\[\{ A \in \R^{n \times n} \mid Ax_j = 0,\quad A(\trueA x_j) = 0 \quad j=1,\dots,k \}. \]
Consider an affine function $g : \R^{\hat{n}} \rightarrow \R^{n \times n}$ defined as follows:
\[
g(\hat{a}) \defn \hat{A} + \sum_{i=1}^{\hat{n}} \hat{a}_i A_i.
\]
The function $g$ has the properties that it is injective and that for each $A$ that satisfies the equality constraints, there must be a vector $\hat{a}$ such that $A = g(\hat{a})$.
In other words, the function $g$ is simply parametrizing the affine subspace of matrices that satisfy the equality constraints.
Now we can reformulate \eqref{two-step} as:
\begin{equation}\label{eq:two-step parametrization}
\begin{aligned}
\min_{x} \quad & c^\transpose  x\\
\textrm{s.t.} \quad & x \in S \\
& g(\hat{a}) x \in S \quad \forall \hat{a} \quad \textrm{s.t.} \quad q ( g(\hat{a})) \leq 0 \\
& g(\hat{a})^2 x \in S \quad \forall \hat{a} \quad \textrm{s.t.} \quad q ( g(\hat{a})) \leq 0.
\end{aligned}
\end{equation}
Let $\hat{q} \defn q \circ g$.
Since $q$ is a strictly convex quadratic function and $g$ is an injective affine map, $\hat{q}$ is also a strictly convex quadratic function.
Since we are under the assumption that there are multiple matrices in $U_0$ that satisfy the equality constraints, there must be a vector $\bar{a} \in \R^{\hat{n}}$ such that $\hat{q}(\bar{a}) < 0$.
To see this, take $\bar{a}_1 \neq \bar{a}_2$ such that $\hat{q}(\bar{a}_1),\hat{q}(\bar{a}_2) \leq 0$.
It follows from strict convexity of $\hat{q}$ that $\hat{q}(\frac{1}{2} (\bar{a}_1 + \bar{a}_2)) < 0$.
Using the definition of $S$, problem \eqref{eq:two-step parametrization} can be rewritten as:
\begin{equation}\label{eq:two-step bilevel}
\begin{aligned}
\min_{x} \quad & c^\transpose  x\\
\textrm{s.t.} \quad & h_i^\transpose  x \leq b_i \quad i = 1, \dots ,r \\
& \begin{bmatrix} \max_{\hat{a}} \quad & h_i^\transpose  g(\hat{a}) x  \\ 
\textrm{s.t.} \quad & \hat{q}(\hat{a}) \leq 0 \end{bmatrix} \leq b_i \quad i = 1, \dots ,r \\
& \begin{bmatrix} \max_{\hat{a}} \quad & h_i^\transpose  g(\hat{a})^2 x  \\ 
\textrm{s.t.} \quad & \hat{q}(\hat{a}) \leq 0 \end{bmatrix} \leq b_i \quad i = 1, \dots ,r.
\end{aligned}
\end{equation}
Let $q_{1,i}(\hat{a};x) = h_i^\transpose  g(\hat{a}) x - b_i$ and $q_{2,i}(\hat{a};x) = h_i^\transpose  g(\hat{a})^2 x - b_i$.
We consider these functions as quadratic functions of $\hat{a}$ parametrized by $x$.
Note that the coefficients of $q_{1,i}$ and $q_{2,i}$ depend affinely on $x$.
Using logical implications, problem \eqref{eq:two-step bilevel} can be rewritten as:
\begin{equation}\label{eq:two-step implications}
\begin{aligned}
\min_{x} \quad & c^\transpose  x\\
\textrm{s.t.} \quad & h_i^\transpose  x \leq b_i \quad i = 1, \dots ,r \\
& \forall \hat{a}, \left[ \hat{q}(\hat{a}) \leq 0 \Rightarrow q_{1,i}(\hat{a};x) \leq 0 \right] \quad i = 1, \dots ,r \\
& \forall \hat{a}, \left[ \hat{q}(\hat{a}) \leq 0 \Rightarrow q_{2,i}(\hat{a};x) \leq 0 \right] \quad i = 1, \dots ,r.
\end{aligned}
\end{equation}
Now we use the S-lemma to reformulate an implication between quadratic inequalities as a constraint on the global nonnegativity of a quadratic function.
Note that as we have already argued for the existence of a vector $\bar{a}$ such that $\hat{q}(\bar{a}) < 0$, the condition of the S-lemma is satisfied.
After introducing variables $\lambda_{1,i}$ and $\lambda_{2,i}$  for $i=1,\dots,r$, we apply the S-lemma $2r$ times to reformulate \eqref{eq:two-step implications} as the following program:
\begin{equation}\label{eq:two-step sdp}
\begin{aligned}
\min_{x,\lambda} \quad & c^\transpose  x\\
\textrm{s.t.} \quad & h_i^\transpose  x \leq b_i \quad i = 1, \dots ,r \\
& \lambda_{1,i} \hat{q}(\hat{a}) - q_{1,i}(\hat{a};x) \geq 0 \quad \forall \hat{a} \quad i = 1, \dots ,r \\
& \lambda_{2,i} \hat{q}(\hat{a}) - q_{2,i}(\hat{a};x) \geq 0 \quad \forall \hat{a} \quad i = 1, \dots ,r \\
& \lambda_{1,i} \geq 0, \quad \lambda_{2,i} \geq 0 \quad i=1,\dots,r.
\end{aligned}
\end{equation}
It is a standard procedure to convert the constraint that a quadratic function\newstuff{ of $N$ variables} is globally nonnegative into a semidefinite constraint\newstuff{ on a matrix of size $(N+1) \times (N+1)$}.
Note that the coefficients of $q_{1,i}$ and $q_{2,i}$ depend affinely on $x$; this results in linear matrix inequalities when \eqref{eq:two-step sdp} is converted into a semidefinite program.
}
\newstuff{
\subsection{Number of \newnewstuff{Two-Step} Trajectories Needed for Learning}
\label{sec:two-step num traj}
In \sectionref{sec:one-step}, we \newnewstuff{established} that when one-step safe learning is possible, it can be done with at most $n$ trajectories of length one (\newnewstuff{see} Corollary~\ref{cor:n steps}).
It is natural to ask how many trajectories might be required in the case of two-step safe learning. We show that generically, $\lceil \frac{n}{2} \rceil$ two-step trajectories suffice for learning.

Given $m$ two-step trajectories, let $X = [x_1,\dots,x_m]$ be a matrix whose columns are the vectors where our trajectories are initialized.
Since our trajectories are of length two, we will observe \newnewstuff{$Y^{(1)} \defn \trueA X$ and $Y^{(2)} \defn \trueA^2 X$.}
We can write these measurements as the following linear system in $A$:
\begin{equation}
\label{eq:two-step linear system}
\newnewstuff{A[X,Y^{(1)}] = [Y^{(1)},Y^{(2)}].}
\end{equation}
From this, it is clear that $A$ will be uniquely identifiable if the matrix \newnewstuff{$[X,Y^{(1)}]$} has rank $n$.
This is only possible if \newnewstuff{$[X,Y^{(1)}]$} has at least $n$ columns; in particular, this requires that $m \geq \lceil \frac{n}{2} \rceil$.
This suggests that we may be able to learn $A$ with only $\lceil \frac{n}{2} \rceil$ trajectories if we choose $X$ correctly.
Unfortunately, it is possible that no matter how we choose $X$, we may need more than $\lceil \frac{n}{2} \rceil$ trajectories in order to make \newnewstuff{$[X,Y^{(1)}]$} have rank $n$.
This can be seen for example if $A$ is the zero matrix \newnewstuff{or the identity matrix.}
Despite this, we can design an algorithm (Algorithm~\ref{alg:two-step}) for which $\lceil \frac{n}{2} \rceil$ trajectories suffice generically to make the matrix $[X,Y^{(1)}]$ have rank $n$, and hence for learning $\trueA$.
Our algorithm relies on the following lemma as a subroutine (see the appendix for a proof).
\begin{lemma}
\label{lem:sdp_convex_hull}
If $S_0^2$ is full-dimensional, one can solve $2n$ semidefinite programs to find $2n$ vectors in $S_0^2$ whose convex hull is full-dimensional \newnewstuff{(if $S_0^2$ is not full-dimensional, the same process will prove that it is not full-dimensional).
These semidefinite programs have the same variables and constraints as the program from \theoremref{thm:two-step}, and in addition, at most $n$ linear constraints.
% \st{: do we want to state the size of these SDP?}
}
\end{lemma}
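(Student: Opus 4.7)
The plan is to iteratively slice $S_0^2$ by coordinate hyperplanes. Define the running affine subspace $H_{i-1} \defn \{x \in \R^n : e_j^T x = c_j, \: j = 1, \dots, i-1 \}$, where $e_j$ is the $j$-th standard basis vector and the scalar $c_j$ is chosen at iteration $j$; set $H_0 = \R^n$. At iteration $i = 1, \dots, n$, I solve two SDPs to compute $v_i^+ \in \arg\max\{e_i^T x : x \in S_0^2 \cap H_{i-1}\}$ and $v_i^- \in \arg\min\{e_i^T x : x \in S_0^2 \cap H_{i-1}\}$; each such SDP has the variables and constraints of the program from \theoremref{thm:two-step} plus the $i - 1 \le n - 1$ linear equality constraints defining $H_{i-1}$. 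If the two optimal values coincide, I halt and report that $S_0^2$ is not full-dimensional; otherwise I pick $c_i$ strictly inside the resulting open interval (say, the midpoint) and proceed.

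If the algorithm completes all $n$ iterations, it returns $\{v_i^\pm\}_{i=1}^n$, with $e_j^T v_i^\pm = c_j$ for every $j < i$ and $e_i^T v_i^+ > e_i^T v_i^-$. The differences $\{v_i^+ - v_i^-\}_{i=1}^n$, arranged as columns of an $n \times n$ matrix, form a lower-triangular matrix with strictly positive diagonal, hence are linearly independent. Therefore the affine hull of the $2n$ vectors has dimension $n$ and their convex hull is full-dimensional. For correctness I need to verify two claims: (a) when $S_0^2$ is full-dimensional, the algorithm never halts early; and (b) when $S_0^2$ is not full-dimensional, the algorithm halts by iteration $\dim(S_0^2) + 1 \le n$.

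Claim (a) will follow by induction on $i$ from the invariant ``$S_0^2 \cap H_{i-1}$ is relatively full-dimensional in $H_{i-1}$''. In the inductive step, $e_i$ is a direction within $H_{i-1}$ (since $H_{i-1}$ is orthogonal to $e_1, \dots, e_{i-1}$), so relative full-dimensionality of the slice forces $m_i < M_i$; picking $c_i \in (m_i, M_i)$ and invoking the standard convex-analysis fact that the relative interior of a convex set surjects onto the open range of any linear functional, one finds a relatively interior point $z \in S_0^2 \cap H_{i-1}$ with $e_i^T z = c_i$, whereupon a small relatively-open ball around $z$ inside $S_0^2 \cap H_{i-1}$ intersects $\{e_i^T x = c_i\}$ in a relatively-open ball of one lower dimension lying in $S_0^2 \cap H_i$. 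Claim (b) will follow because if the algorithm has not halted by iteration $j$, then at each prior iteration the slice dimension must have dropped by exactly one (dropping by zero would make $e_j^T x$ constant on the slice and trigger halting), so starting from $\dim(S_0^2) = k < n$ the slice reaches dimension zero at iteration $k$, forcing $m_{k+1} = M_{k+1}$ at the next iteration. The main technical subtlety lies in maintaining the full-dimensionality invariant in claim (a), resolved by the ball-inside-interior argument above.
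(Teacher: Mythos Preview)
Your proposal is correct and follows essentially the same approach as the paper: the same iterative coordinate-slicing scheme, the same midpoint choice for $c_j$, and the same lower-triangular structure of the differences $v_i^+ - v_i^-$. The only minor differences are that the paper maintains the invariant by tracking an explicit Euclidean ball $B(x_k,r_k)\subseteq S_0^2$ satisfying the slice constraints (rather than your relative-interior argument), and explicitly exhibits a full-dimensional parallelotope inside the convex hull (rather than invoking the affine-hull dimension).
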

Our algorithm for two-step safe learning is \algorithmref{alg:two-step}. We can prove the following theorem about it.
%\st{: move algorithm here when the space is finalized everywhere else.}

%
\begin{algorithm2e}[htb]\label{alg:two-step}
\LinesNumbered
\SetKwInOut{Input}{Input}
\SetKwInOut{Output}{Output}
\SetKwInOut{Require}{Require}
\SetKw{Break}{break}
\DontPrintSemicolon
\Require{$S_0^2$ full-dimensional}
\Input{polyhedron $S \subset \R^n$, ellipsoid $U_0 \subset \R^{n \times n}$, cost vector $c \in \R^n$, and a constant $\varepsilon \in (0, 1]$.}
\Output{A matrix $\trueA \in \R^{n \times n}$.}
Compute $2n$ vectors $z_1, \dots, z_{2n} \in S_0^2$
such that $\mathrm{conv}\{z_1, \dots, z_{2n}\}$ is full-dimensional (cf. Lemma~\ref{lem:sdp_convex_hull})\;
Define $m = \lceil \frac{n}{2} \rceil$\;
\For{$k = 0,\dots,m-1$}{
$U_k \gets \{ A \in U_0 \mid  Ax_j = y_j^{(1)}, A y_j^{(1)} = y_j^{(2)} \quad j = 1,\dots,k\}$\;
\If{$U_k$ is a singleton\footnotemark}{
\Return the single element in $U_k$ as $\trueA$\; \label{alg:two_step_early_return}
}
Let $x_k^\star$ be an optimal solution to problem \eqref{two-step} with the set $U_k$ (cf. \theoremref{thm:two-step})\;
Pick a random vector $\lambda \in \R^{2n}$ from the $2n$-dimensional simplex\footnotemark\;
$x_{k+1} \gets (1-\varepsilon) x_k^\star + \varepsilon \sum_{i=1}^{2n} \lambda_i z_i$\;
Observe $y_{k+1}^{(1)} \gets \trueA x_{k+1}$, $y_{k+1}^{(2)} \gets \trueA y_{k+1}^{(1)}$\;
}
\newnewstuff{Define matrix $X = [x_1,\dots,x_m]$\;
Define matrix $Y^{(1)} = [y_1^{(1)},\dots,y_{m}^{(1)}]$\;
Define matrix $Y^{(2)} = [y_1^{(2)},\dots,y_{m}^{(2)}]$\;
\Return $\trueA = [Y^{(1)},Y^{(2)}] [X,Y^{(1)}]^\transpose  ([X,Y^{(1)}] [X,Y^{(1)}]^\transpose )^{-1}$}
\caption{Two-Step Safe Learning Algorithm}
\end{algorithm2e}

\begin{theorem}
\label{thm:two-step generic}
\newnewstuff{Suppose $S^2_0$ is full-dimensional.
For any matrix $\trueA$ outside of a Lebesgue measure zero set in $\Rnn$, \algorithmref{alg:two-step} almost surely succeeds in safe learning using only $\lceil \frac{n}{2} \rceil$ trajectories.}
% There exists a set $\mathcal{A} \subset \Rnn$ of Lebesgue measure zero such that if $\trueA \notin \mathcal{A}$ and if $S^2_0$ is full-dimensional, then \algorithmref{alg:two-step} will almost surely recover $\trueA$ (using only $\lceil \frac{n}{2} \rceil$ trajectories).
\end{theorem}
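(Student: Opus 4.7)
The plan is to separate the claim into a (deterministic) safety statement and an (almost sure) learning statement. Safety is immediate from convexity: at each iteration $k$ the point $x_k^\star$ lies in $S_k^2$ by construction (it is feasible for \eqref{two-step}), each $z_i$ lies in $S_0^2 \subseteq S_k^2$ (since $U_k \subseteq U_0$ implies $S_k^2 \supseteq S_0^2$), and $S_k^2$ is convex as an intersection of preimages of the polytope $S$ under the linear maps $x \mapsto Ax$ and $x \mapsto A^2 x$ for $A \in U_k$; hence $x_{k+1} = (1-\varepsilon)x_k^\star + \varepsilon \sum_i \lambda_i z_i \in S_k^2$, certifying two-step safety. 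For the learning part, note that $Y^{(1)} = \trueA X$ and $Y^{(2)} = \trueA Y^{(1)}$, so $[Y^{(1)}, Y^{(2)}] = \trueA [X, Y^{(1)}]$; once the matrix $[X, Y^{(1)}]$ attains rank $n$, the pseudoinverse formula in the algorithm recovers $\trueA$ exactly. Thus the theorem reduces to showing that for $\trueA$ outside a measure-zero set in $\Rnn$, the subspace $V_m \defn \mathrm{span}\{x_1, \trueA x_1, \ldots, x_m, \trueA x_m\}$ equals $\R^n$ almost surely over the $\lambda^{(k)}$'s.

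I would prove by induction on $k$ that $\dim V_k = \min(2k, n)$ almost surely. For the inductive step, condition on $\lambda^{(0)}, \ldots, \lambda^{(k-1)}$, which fixes both $V_k$ and $x_k^\star$. The map $\lambda^{(k)} \mapsto x_{k+1}$ is then affine with full-dimensional image in $\R^n$ (because $\mathrm{conv}\{z_1,\ldots,z_{2n}\}$ is full-dimensional by Lemma~\ref{lem:sdp_convex_hull}), so the pushforward of the uniform measure on the simplex is absolutely continuous with respect to Lebesgue measure on that image. It therefore suffices to show that when $\dim V_k = 2k \leq n - 2$ the ``bad set''
\[
B \defn \{x \in \R^n : x \in V_k \text{ or } \trueA x \in V_k + \mathrm{span}(x)\}
\]
has Lebesgue measure zero in $\R^n$. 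The first clause cuts out a subspace of dimension $2k < n$. The second clause is equivalent to $(\trueA - \alpha I)x \in V_k$ for some $\alpha \in \R$: away from the finite spectrum of $\trueA$ this is the image of a smooth map $V_k \times \R \to \R^n$ and has dimension at most $2k+1 \leq n-1$; for each $\alpha$ in the spectrum, $\{x : (\trueA - \alpha I)x \in V_k\}$ is a proper subspace of $\R^n$ as long as $V_k \not\supseteq \mathrm{Im}(\trueA - \alpha I)$, which holds for $\dim V_k \leq n-2$ whenever $\trueA$ has simple spectrum. A Fubini argument over $(\lambda^{(0)},\ldots,\lambda^{(k-1)})$ and $\lambda^{(k)}$ then yields $x_{k+1} \notin B$ almost surely, so $\dim V_{k+1} = 2k + 2$.

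For $n$ even the induction runs all the way to $\dim V_m = n$ with $m = n/2$. For $n$ odd it yields $\dim V_{m-1} = n - 1$ with $m = (n+1)/2$, and the final step only needs $x_m \notin V_{m-1}$; since $V_{m-1}$ is an $(n-1)$-dimensional subspace of $\R^n$ and thus Lebesgue-null, the same pushforward argument gives this almost surely. The exceptional measure-zero set of $\trueA$'s can be taken to be the complement of matrices with $n$ distinct eigenvalues over $\mathbb{C}$, which is a proper algebraic subvariety of $\Rnn$ and hence Lebesgue-null. The main obstacle I anticipate is the measure-theoretic bookkeeping in the inductive step---in particular, handling the potentially discontinuous dependence of $x_k^\star$ on past data and carefully bounding $\dim(\bigcup_\alpha B_\alpha)$ as $\alpha$ ranges over $\R$; both difficulties are resolved by working conditionally on the history, at which point the map $\lambda^{(k)} \mapsto x_{k+1}$ is affine with full-dimensional image and the measure-theoretic estimates become routine.
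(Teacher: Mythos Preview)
Your proof is correct and takes a genuinely different route from the paper's. The paper packages the failure event into a single algebraic variety: it sets $\mathcal{V} = \{(A,X)\in\R^{n\times(n+m)} : \det Z(A,X)=0\}$ with $Z(A,X)$ the first $n$ columns of $[X,AX]$, exhibits one explicit $(A,X)\notin\mathcal{V}$ to conclude $\mathcal{V}$ is a proper variety and hence null, slices via Fubini so that for a.e.\ $A$ the section $\mathcal{V}_A\subset\R^{n\times m}$ is null, and then invokes a standalone proposition (Proposition~\ref{prop:perturbations_avoid_nullsets}) asserting that additively perturbed iterates $z_t=f_{t-1}(z_{1:t-1})+\delta_t$ avoid any prescribed null set almost surely. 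Your inductive argument instead tracks $\dim V_k$ step by step and bounds a geometric bad set in $\R^n$ at each stage; your conditioning/Fubini step is essentially an inline proof of that proposition. What your route buys is an \emph{explicit} description of the exceptional set of $\trueA$'s (matrices with a repeated eigenvalue), whereas the paper's is the implicit ``$\mathcal{V}_A$ not null'' locus; you also treat odd $n$ directly, which the paper leaves to the reader. What the paper's route buys is brevity: a single determinantal polynomial replaces your case analysis on the real spectrum of $\trueA$ and the smooth-image-of-lower-dimension argument for $\bigcup_{\alpha}(\trueA-\alpha I)^{-1}V_k$.
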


The proof of Theorem~\ref{thm:two-step generic} relies 
on the following proposition whose proof can be found in the appendix.
\begin{proposition}
\label{prop:perturbations_avoid_nullsets}
Let $\lambda^n$ denote the Lebesgue measure on $\R^{n}$.
\newnewstuff{Let $\{\delta_t\}_{t = 1}^{m}$ be a finite sequence of mutually independent
random variables in $\R^n$. Suppose that for each $t$, the law of $\delta_t$ is
absolutely continuous with respect to\ $\lambda^n$.
Let $\{f_t\}_{t = 1}^{m-1}$ be any sequence of functions
mapping $\R^{n \times t}$ to $\R^n$.
Define $z_1 = \delta_1$ and $z_{t} = f_{t-1}(z_1, ..., z_{t-1}) + \delta_t$ for $t =2,\dots,m$.}
For every $\lambda^{n m}$ null-set $N$, we have
$\Pr( (z_1, ..., z_m) \in N ) = 0$.
\end{proposition}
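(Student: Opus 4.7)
The plan is to prove a stronger statement by induction on $m$, namely that the joint law $\mu_m$ of $(z_1, \ldots, z_m)$ on $\R^{nm}$ is absolutely continuous with respect to $\lambda^{nm}$. The conclusion then follows immediately because $\mu_m(N) = 0$ for every $\lambda^{nm}$ null-set $N$. The base case $m = 1$ is exactly the hypothesis on $\delta_1$, so the work is in the inductive step.

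For the inductive step, suppose $\mu_{m-1}$ is absolutely continuous with respect to $\lambda^{n(m-1)}$, and let $\nu$ denote the law of $\delta_m$. Since $\delta_m$ is independent of $(\delta_1, \ldots, \delta_{m-1})$, and $(z_1, \ldots, z_{m-1})$ is a measurable function of $(\delta_1, \ldots, \delta_{m-1})$, the law of $(z_1, \ldots, z_{m-1}, \delta_m)$ is the product $\mu_{m-1} \otimes \nu$. Fix a $\lambda^{nm}$ null-set $N$ and let $N_y \defn \{w \in \R^n : (y, w) \in N\}$ be the slice over $y \in \R^{n(m-1)}$. By Fubini's theorem, $\lambda^n(N_y) = 0$ for $\lambda^{n(m-1)}$-almost every $y$, and by the inductive hypothesis this exceptional set is also $\mu_{m-1}$-null. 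Using independence followed by translation invariance of $\lambda^n$, I would then write
\[
\Pr\bigl((z_1, \ldots, z_m) \in N\bigr) = \int \nu\bigl(N_y - f_{m-1}(y)\bigr) \, d\mu_{m-1}(y),
\]
and observe that for every $y$ with $\lambda^n(N_y) = 0$, the translated slice $N_y - f_{m-1}(y)$ is also $\lambda^n$-null, hence $\nu$-null by absolute continuity of $\delta_m$. The integral therefore vanishes, completing the induction.

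The main obstacle, if one can call it that, is purely bookkeeping: one must carefully invoke Fubini in the product space $\R^{n(m-1)} \times \R^n$, apply translation invariance of Lebesgue measure to remove the deterministic drift $f_{m-1}(y)$, and then use the inductive hypothesis to discard the exceptional set of $y$'s. The argument implicitly assumes that each $f_t$ is Borel measurable, which is the only way the $z_t$'s are well-defined random variables to begin with; under that standing assumption, no smoothness or continuity of the $f_t$ is needed, and in particular the argument applies to the (possibly discontinuous) choice of next initialization produced by solving the semidefinite program in Algorithm~\ref{alg:two-step}.
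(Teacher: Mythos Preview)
Your proposal is correct and follows essentially the same inductive route as the paper's proof: slice the null set via Fubini--Tonelli, use translation invariance of $\lambda^n$ to absorb the drift $f_{m-1}(y)$, apply absolute continuity of $\delta_m$ on each slice, and invoke the inductive hypothesis to discard the exceptional set of $y$'s. The only cosmetic difference is that you package the induction as ``$\mu_{m}\ll\lambda^{nm}$'' and write the conditional probability as a single integral, whereas the paper explicitly splits over the set $N^0_{k+1}$ of good slices and its complement; the underlying argument is identical.
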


%With this proposition in place, we turn to the proof of Theorem~\ref{thm:two-step generic}.
\begin{proof}[of Theorem~\ref{thm:two-step generic}]
\newnewstuff{First observe that by the convexity of $S^2_k$ (i.e., the feasible set of \eqref{two-step}), every initialization point chosen by \algorithmref{alg:two-step} is two-step safe.}

Assume for simplicity that $n$ is even and let $m=\lceil \frac{n}{2} \rceil$.
Consider the set 
\[
\mathcal{V} \defn  \{ [A,X] \in \R^{n \times (n+m)} \mid \text{det}(Z(A,X)) = 0 \}, 
\]
where $Z(A,X) \in \Rnn$ is the first $n$ columns of $[X,AX]$.
\newnewstuff{As a} zero-set of a polynomial, $\mathcal{V}$ is either all of $\R^{n \times (n+m)}$ or \newnewstuff{has} Lebesgue measure zero.
It is not all of $\R^{n \times (n+m)}$ since, defining $I_s$ to be the $s \times s$ identity matrix, we can take
\[
X = \begin{bmatrix*}[r]
 I_m\\
 \hline
 0
\end{bmatrix*},
\quad
A = \begin{bmatrix*}[r]
 0 & \vline & 0\\
 \hline
 I_{\lfloor \frac{n}{2} \rfloor} & \vline & 0
\end{bmatrix*}
\]
% With $X$ and $A$ defined this way we have:
\newnewstuff{and observe that}
\[
\text{det}(Z(A,X)) = \text{det}(I_n) = 1 \neq 0.
\]
Therefore $\mathcal{V}$ must have Lebesgue measure zero.
Since the Lebesgue measure on $\R^{n \times (n+m)}$ is the completion of the product measure of the the Lebesgue measures of $\Rnn$ and $\R^{n \times m}$, we have that for almost every $A$, the set
\[
\mathcal{V}_A \defn \{ X \in \R^{n \times m} \mid Z(A,X) = 0 \}
\]
has Lebesgue measure zero.
Thus there must exist a set $\mathcal{A} \subset \Rnn$ of Lebesgue measure zero such that if $A \notin \mathcal{A}$, then $\mathcal{V}_{A}$ has Lebesgue measure zero.

\footnotetext[2]{The same approach as the proof of \lemmaref{lem:uniqueness} can be used to perform this check.}
\footnotetext[3]{Any distribution on the simplex that is absolutely continuous with respect to the Lebesgue measure would work, for example, the uniform distribution.}

Supposing $\trueA \notin \mathcal{A}$, we now apply Proposition~\ref{prop:perturbations_avoid_nullsets}
to the points $x_1,\dots,x_m$ produced by Algorithm~\ref{alg:two-step}.
Assume that the algorithm does not return at Line~\ref{alg:two_step_early_return},
otherwise there is nothing to prove.
Notice 
\newnewstuff{that for some choice of functions $f_1,\dots,f_{m-1}: \R^{n\times t} \rightarrow \R^n$,}
we can write
\newnewstuff{for $k=2,\dots, m$,} 
$x_{k} = f_{k-1}(x_0^\star, \dots, x_{k-1}^\star) + \delta_k$
with $\delta_k = \varepsilon \sum_{i=1}^{2n} \lambda_i z_i$.
It is clear that the law of $\delta_k$ is absolutely continuous \newnewstuff{with respect to}
the Lebesgue measure on $\R^n$ since \newnewstuff{$\text{conv} (\{z_1, \dots, z_{2n}\})$} is full-dimensional.
\newnewstuff{Hence, 
letting} $X \in \R^{n \times m}$ \newnewstuff{be} the matrix
with $x_1, ..., x_m$ as columns,
by Proposition~\ref{prop:perturbations_avoid_nullsets},
$\Pr(X\in \mathcal{V}_{\trueA}) = 0$.
Therefore, almost surely, we have
$\text{det}(Z(\trueA,X)) \neq 0$.
% This proves that $W = [X, \trueA X]$ is invertible, and hence
% $Z W^{-1} = \trueA[X, \trueA X][X, \trueA X]^{-1} = \trueA$.
\newnewstuff{This proves that $[X,Y^{(1)}]$ almost surely has rank $n$ and hence 
\[
[Y^{(1)},Y^{(2)}] [X,Y^{(1)}]^\transpose  ([X,Y^{(1)}] [X,Y^{(1)}]^\transpose )^{-1}=\trueA.
\]
}

% If $S^2_0$ contains a ball, it is not Lebesgue measure zero.
% Then we have that $(S^2_0)^m \subseteq \R^{n \times m}$ is also not Lebesgue measure zero.
% Picking all our trajectory initializations uniformly at random from $S^2_0$ corresponds to picking $X$ uniformly at random from $(S^2_0)^m$.
% We are sampling uniformly at random from a set of strictly positive measure, thus the probability that we sample a point in any measure zero set (namely $\mathcal{V}_{\trueA}$) is zero.
% Therefore, with probability $1$ we have $X \notin \mathcal{V}_{\trueA}$.
% Then we have $\text{det}([X,\trueA X]) \neq 0$ and thus $[X,\trueA X]$ is invertible.
% Since we observe $Y \defn \trueA X$ and $W \defn \trueA^2 X$, we can now recover $\trueA$ by solving the linear system \eqref{eq:two-step linear system}:
% \[
% \trueA = [Y,W] ([X,Y])^{-1}.
% \]
\end{proof}

}

\subsection{Numerical Example}
\label{sec:two-step example}

\begin{figure}[t]
\figureconts
{fig:two-step}% label for whole figure
{\caption{Two-step safe learning associated with the numerical example in \sectionref{sec:two-step example}.}}% caption for whole figure
{%
\subfigure[$S^2_k$ grows with $k$.][c]{%
\label{fig:two-step S}% label for this sub-figure
\includegraphics[width=.5\textwidth -.5em]{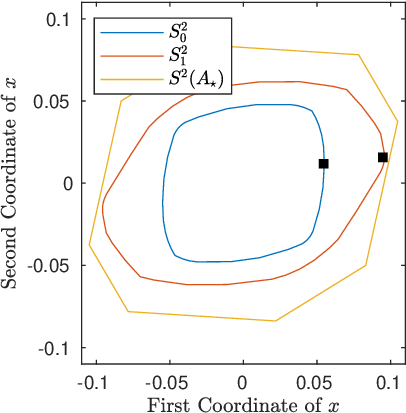}
} % space out the images a bit
\subfigure[$U_k$ shrinks with $k$.][c]{%
\label{fig:two-step U}% label for this sub-figure
\includegraphics[width=.5\textwidth -.5em]{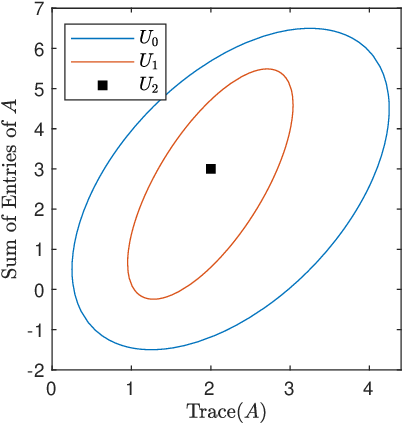}
}
}
\end{figure}

We present a numerical example \newnewstuff{of two-step safe learning}, again with $n=4$.
Here we choose a nominal matrix
\[
A_0 = \begin{bmatrix*}[r]
 2.25 &  0.75 &  4.25 &  1.75\\
 2.25 & -3.25 & -1.25 & -2.25\\
-2.00 & -2.75 &  1.25 &  0.00\\
 1.75 & -0.25 & -2.00 &  2.00
\end{bmatrix*}
\]
and let $U_0 = \{ A \in \R^{4 \times 4} \mid \norm{A-A_0}_F \leq 1 \}$.
We let $S = \{ x \in \R^4 \mid |x_i| \leq 1, i=1,\dots,4 \}$ and $c=(-1,0,0,0)^\transpose $.
We choose the true matrix $\trueA$ to be the same matrix used in \sectionref{sec:one-step example} \newnewstuff{(which belongs to $U_0$)}.
\par
%\st{: bring in Algorithm~\ref{alg:two-step}}
In this example, \newnewstuff{with \algorithmref{alg:two-step}}%
%by solving two semidefinite programs
, we learn the true matrix $\trueA$ by \newnewstuff{choosing two initialization points} that are each two-step safe.
In other words, \newnewstuff{\algorithmref{alg:two-step}} chooses $x_1\in\R^4$, observes $\trueA x_1$ and $\trueA^2 x_1$, then chooses $x_2\in\R^4$, and observes $\trueA x_2$ and $\trueA^2 x_2$.
We can verify that we have recovered $\trueA$ if the vectors $\{ x_1, \trueA x_1 , x_2, \trueA x_2\}$ are linearly independent, which is the case.
The projection to the first two dimensions of the two \newnewstuff{initialization points} $x_1$ and $x_2$ are plotted in \figureref{fig:two-step S}.
Because of the cost vector $c$, points further to the right in the plot have lower \newnewstuff{initialization} cost.
Also plotted are the projections to the first two dimensions of the sets
\[ \begin{aligned}
S^2_0 &= \{ x \in S \mid A x \in S, A^2 x \in S \quad \forall A \in U_0\}, \\
S^2_1 &= \{ x \in S \mid A x \in S, A^2 x \in S \quad \forall A \in U_1 \}, \\
S^2(\trueA) &= \{ x \in S \mid \trueA x \in S, \trueA^2 x \in S \}.
\end{aligned} \]
The sets $S^2_0$ and $S^1_0$ are the projections to $x$-space of the feasible regions of our two semidefinite programs (cf. \theoremref{thm:two-step}).
The set $S^2(\trueA)$ is the true two-step safety region of $\trueA$.
In \figureref{fig:two-step U}, we plot $U_k$ (the remaining uncertainty after observing $k$ trajectories of length two) for $k\in\{0, 1, 2\}$; we draw a two-dimensional projection of these sets of matrices by looking at the trace and the sum of the entries of each matrix in the set.
Note that $U_2$ is a single point since we have recovered the true dynamics after observing the second trajectory.
The cost of learning (i.e.,\ $c^\transpose  x_1 + c^\transpose  x_2$) 
is \newnewstuff{$-0.1493$}.

We can construct an analogue of the offline \algorithmref{alg:offline-one-step} by only making measurements from $S^2_0$.
This approach would first pick the optimal point in $S^2_0$ (i.e., $x_1$), and then another vector in $S^2_0$ close to $x_1$, but linearly independent from it.
The cost of learning for this offline approach would be $2c^\transpose  x_1 = -0.1099$.
Finally, we can again find a lower bound on the cost of learning
of any algorithm that \newnewstuff{chooses two two-step safe initialization points}
by assuming we know $\trueA$ ahead of time and optimizing $c^\transpose  x$ over $S^2(\trueA)$; in this example, the lower bound is $-0.2097$.
Here, again, we see that by using information on the fly, we can succeed at safe learning at a considerably lower cost than the offline approach.
\newstuff{\section{Infinite-Step Safe Learning of Linear Systems}
\label{sec:linear}

%In this section, we study the problem of safely learning a dynamical system from ``long'' trajectories. 

%In contrast to the previous two sections, in this section we focus on the extreme case when the trajectory length is unbounded.

In contrast to the previous two sections, in this section we consider the problem of safely learning the linear dynamical system in \eqref{eq:linear dynamics} from trajectories of unbounded length.
This means that we are constrained to initializing the system at
points whose \aaa{entire} future trajectories are guaranteed
to remain in a specified safety region.

More formally, in the infinite-step safe learning problem, we have as input a polyhedral safety region $S \subset \R^n$ given in the form
\eqref{eq:S polyhedron},
% \begin{equation}
% \label{eq:S_polyhedron}
% S = \left \{ x \in \Rn \mid  h_i^\transpose  x \leq 1 \quad i = 1, \dots ,r \right \},
% \end{equation}
an objective function representing \newnewstuff{initialization} cost which for simplicity we take to be a linear function $c^\transpose  x$, and a \newnewstuff{polyhedral} uncertainty set $U_0 \subset \R^{n \times n},$ given in the form \eqref{eq:U_0 polyhedron}, to which the matrix $\trueA$ governing the true dynamics belongs.
%Note that from \eqref{eq:S_polyhedron}, the origin is always in the interior of $S$.
%We take $U_0$ to be a polyhedron:
%\begin{equation}
%\label{eq:U_0 polyhedron}
%U_0 = \left\{ A \in \Rnn \mid  \Tr(V_j^\transpose  A) \leq v_j \quad j = 1, \dots ,s \right\}.
%\end{equation}
Having collected $k$ safe trajectories $\{ (x_\ell, \trueA x_\ell, \trueA^2 x_\ell, \ldots) \}_{\ell=1}^{k}$, 
our uncertainty around $\trueA$ reduces to
\begin{align} \nonumber
    U_k = \{ A \in U_0 \mid A x_\ell = \trueA x_\ell, A^2 x_\ell = \trueA^2 x_\ell,\ldots,A^n x_\ell = \trueA^n x_\ell, \ell=1, \dots, k \}.
\end{align}
Note that in the definition of $U_k$, information contained in the tail of the trajectories, beyond step $n$, is discarded. This is because
of the following proposition, which we prove in the appendix using the Cayley-Hamilton theorem.
\begin{proposition}
\label{prop:krylov}
Let $ \trueA\in\mathbb{R}^n$. For any vector $x \in \R^n$ and integer \newnewstuff{$k\geq n$}, we have
\begin{align*}
    &\{ A \in \Rnn \mid A x = \trueA x, A^2 x = \trueA^2 x, \dots, A^\newnewstuff{k} x = \trueA^\newnewstuff{k} x \}  \\ =&\{ A \in \Rnn \mid  A x = \trueA x, A^2 x = \trueA^2 x, \dots, A^n x = \trueA^n x \}.
\end{align*}
\end{proposition}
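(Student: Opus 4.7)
The containment of the left-hand set in the right-hand set is immediate (more equations can only impose at least as many constraints), so my focus is on the reverse inclusion: if $A\in\Rnn$ satisfies $A^i x = \trueA^i x$ for $i=1,\dots,n$, then $A^j x = \trueA^j x$ for every $j$ with $n<j\le k$. I plan to prove this by (strong) induction on $j$, with the Cayley--Hamilton theorem applied to $\trueA$ as the only nontrivial ingredient.

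Let $p(\lambda) = \lambda^n - c_{n-1}\lambda^{n-1} - \cdots - c_0$ denote the characteristic polynomial of $\trueA$, so that $\trueA^n = c_{n-1}\trueA^{n-1}+\cdots+c_0 I$. Left-multiplying this identity by $\trueA^{j-n}$ produces the $n$-term recurrence
\[ \trueA^j x \;=\; c_{n-1}\,\trueA^{j-1}x \;+\; \cdots \;+\; c_0\,\trueA^{j-n}x \]
valid for every $j\ge n$. Crucially, this is a recurrence purely among the iterates $\trueA^i x$; I do not claim, and will not use, that $A$ itself satisfies the same polynomial identity.

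For the inductive step, assume that $A^i x = \trueA^i x$ for all $0\le i\le j$ with $j\ge n$, and aim to deduce the same at $j+1$. Using the recurrence above at index $j$ together with the inductive hypothesis on indices $j-n,\dots,j-1$, I can rewrite $\trueA^j x = c_{n-1}A^{j-1}x+\cdots+c_0 A^{j-n}x$; since $A^j x=\trueA^j x$ by hypothesis, the same identity holds with $\trueA^j x$ replaced by $A^j x$ on the left. Multiplying this identity on the left by $A$ and reapplying the inductive hypothesis term by term yields $A^{j+1}x = c_{n-1}\trueA^{j}x+\cdots+c_0\trueA^{j-n+1}x$, which by the recurrence at index $j+1$ equals $\trueA^{j+1}x$. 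This closes the induction.

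I do not foresee any substantive obstacle. The only conceptual subtlety is that Cayley--Hamilton must be applied to $\trueA$ rather than to $A$, since $A$ and $\trueA$ need not share a characteristic polynomial; the role of the hypotheses $A^i x = \trueA^i x$ for $i\le n$ is precisely to transport the recurrence satisfied by $\trueA^i x$ over to the $A^i x$ iterates, which is why exactly $n$ (and no more) initial equalities are needed.
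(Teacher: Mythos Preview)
Your proposal is correct and follows essentially the same approach as the paper: an induction on the exponent, with the Cayley--Hamilton theorem applied to $\trueA$ as the key ingredient to express higher iterates in terms of lower ones. The only cosmetic difference is that you use the fixed coefficients of the characteristic polynomial to set up a single $n$-term recurrence for the sequence $\trueA^j x$, whereas the paper invokes $\trueA^k\in\mathrm{span}(I,\dots,\trueA^{n-1})$ with coefficients that may vary with $k$; the inductive mechanics are otherwise identical.
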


%This is a consequence of the Cayley Hamilton theorem; see the appendix for the proof.

%
Given the sets $S, U_k$, and the vector $c$, the optimization problem we would like to solve to find the next best infinite-step safe \newnewstuff{initialization} point \versionii{(i.e., the version of \eqref{eq:greedy_safe} for this specific case)} is the following:
\begin{equation}
\label{eq:S_k_infinity}
\begin{aligned}
\min_{x} \quad & c^\transpose  x\\
\textrm{s.t.} \quad & x \in S \\
& A^t x \in S \quad \forall A \in U_k, \quad t=1, 2,\ldots.
\end{aligned}
\end{equation}

%Given $S, c, U_0$, the optimization problem we would like to solve to find the next best infinite-step safe query point is the following:
%\begin{equation}\label{eq:infinite}
%\begin{aligned}
%\min_{x} \quad & c^\transpose  x\\
%\textrm{s.t.} \quad & x \in S \\
%& A^t x \in S \quad \forall A \in U_0 \quad t=1,2,\dots.
%\end{aligned}
%\end{equation}

In keeping with the naming conventions of this work, we refer to the feasible region of \eqref{eq:S_k_infinity} as $S_k^{\infty}$; if $U_0$ is a single matrix $A$, we call \newnewstuff{this set} $S^{\infty}(A)$.
%
% Recall that the spectral radius of a square matrix $A$ is defined as
% \[
% \rho(A) \defn \max_i |\lambda_i(A)|,
% \]
% where $\lambda_i(A)$ is the $i$-th eigenvalue of the matrix $A$.
% We say that a matrix is \emph{stable} (or \emph{Schur stable}) if its spectral radius is less that $1$.
%

Unlike problems \eqref{eq:one-step} and \eqref{two-step}, which as we showed admit a reformulation as tractable conic programs, problem \eqref{eq:S_k_infinity} is in general intractable. In fact, even when the set $U_0$ is a singleton, deciding if a given vector $x$ is feasible to \eqref{eq:S_k_infinity} is NP-hard~\citep[Theorem 2.1]{ahmadi18rdo}.
Therefore, our aim in this section is to find tractable inner approximations to the feasible region of \eqref{eq:S_k_infinity}.

We now describe our assumptions on \eqref{eq:S_k_infinity} and their implications. We assume that our safety region $S$ is compact, as this is typically the case in most applications. It is also natural to require $S_0^\infty$ to be full-dimensional, as otherwise the implementation of a safe \newnewstuff{initialization} would be impossible in presence of arbitrarily small quantization error and/or physical perturbations.
Under these assumptions, we can make the following conclusions.
%
%We first claim that if $S$ is compact, then for $S_0^{\infty}$ to be full-dimensional, the following conditions are necessary. ...
%These assumptions on $S$ are natural for safe learning;
%safety regions in typical applications are bounded,
%and furthermore a natural sufficient condition for learning is the invertible of the 
%
% If $S$ is compact, then for $S_0^{\infty}$ to be full-dimensional, the following conditions are necessary:
% \begin{enumerate}
%     \item The set $U_0$ is compact.
%     \item Each matrix in $U_0$ is marginally stable.
% \end{enumerate}
% We will take the slightly stronger assumptions which are sufficient:
% \begin{enumerate}
%     \item The set $U_0$ is compact.
%     \item Each matrix in $U_0$ is stable.
% \end{enumerate}
%
\begin{proposition}\label{prop:U0bounded.MarginallyStable}
Suppose that $S$ is compact and $S_0^\infty$ is full-dimensional. Then, $U_0$ is bounded\newnewstuff{\footnote{As the proof demonstrates, the claim that $U_0$ is bounded holds even under the weaker requirement that $S$ is compact and $S_0^1$ is full-dimensional.}} and contains only matrices with spectral radius\footnote{Recall that the spectral radius of a square matrix $A$ is defined as
$\rho(A) \defn \max_i |\lambda_i(A)|$, where $\lambda_i(A)$ is the $i$-th eigenvalue of $A$.} less than or equal to one.
\end{proposition}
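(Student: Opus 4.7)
The plan is to prove the two claims separately, with the spectral radius bound being the more substantive part.

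For boundedness of $U_0$: since $S_0^\infty \subseteq S_0^1$, full-dimensionality of $S_0^\infty$ implies full-dimensionality of $S_0^1$, so $S_0^1$ contains $n+1$ affinely independent points $x_0, x_1, \ldots, x_n$. Setting $v_i = x_i - x_0$ produces a basis of $\R^n$. For every $A \in U_0$, each $Ax_i$ lies in the compact set $S$, so each $Av_i = Ax_i - Ax_0$ lies in the bounded set $S - S$ independently of $A$. Writing $V = [v_1 \mid \cdots \mid v_n]$, which is invertible, and $A = (AV) V^{-1}$ shows that $A$ is bounded uniformly over $U_0$. This argument actually only uses compactness of $S$ and full-dimensionality of $S_0^1$, which motivates the footnote remark.

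For the spectral radius bound, I would argue by contradiction. Suppose some $A \in U_0$ has $\rho(A) > 1$. Decompose $\R^n = E_s \oplus E_u$, where $E_u$ is the (nontrivial) sum of generalized eigenspaces of $A$ corresponding to eigenvalues of modulus strictly greater than one, and $E_s$ collects the rest. Both subspaces are $A$-invariant. Since every eigenvalue of the restriction $A|_{E_u}$ has modulus greater than one, $A|_{E_u}$ is invertible with $(A|_{E_u})^{-1}$ of spectral radius less than one; by Gelfand's formula $\|(A|_{E_u})^{-t}\| \to 0$ as $t \to \infty$. Consequently, for any nonzero $y \in E_u$, the identity $y = (A|_{E_u})^{-t}(A^t y)$ forces $\{A^t y\}$ to be unbounded, for otherwise $y$ would have to equal zero.

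Now write every $x \in \R^n$ as $x = x_s + x_u$ according to the decomposition, and let $\pi_u$ denote the (continuous) projection onto $E_u$ along $E_s$. Since $E_u$ is $A$-invariant, $\pi_u$ commutes with $A$, so $\pi_u(A^t x) = A^t x_u$. Boundedness of the orbit $\{A^t x\}$ therefore forces $x_u = 0$, meaning the set of vectors with bounded forward orbits under $A$ is contained in the proper subspace $E_s$. But every $x \in S_0^\infty$ has orbit trapped in the compact set $S$ and is thus bounded, giving $S_0^\infty \subseteq E_s$ and contradicting its full-dimensionality. The main technical point to be careful about is the use of generalized eigenspaces (so that the argument covers non-diagonalizable $A$ and complex eigenvalues without needing to pass to $\mathbb{C}^n$); everything else reduces to standard linear algebra together with the compactness of $S$.
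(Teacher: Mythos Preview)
Your proof is correct, but both parts take a different route from the paper.

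For boundedness, the paper argues by contradiction using the convexity of $U_0$: an unbounded convex $U_0$ contains a ray $A_0 + \lambda D$, and since the nullspace of $D$ cannot contain the full-dimensional $S_0^1$, some $x \in S_0^1$ has $(A_0+\lambda D)x$ escaping the compact $S$. Your direct argument via $n+1$ affinely independent points and the invertible matrix $V$ is cleaner and, notably, does not use convexity of $U_0$ at all, so it applies to arbitrary initial uncertainty sets.

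For the spectral radius bound, the paper gives a probabilistic argument: for $\bar{A}$ with $\rho(\bar{A})>1$, it uses the singular value decomposition of $\bar{A}^k$ and rotational invariance of Gaussians to show $\Pr(g \in S_0^\infty)=0$ for a standard Gaussian $g$, and then derives a contradiction from absolute continuity of Lebesgue measure with respect to Gaussian measure. Your argument via the real generalized eigenspace decomposition $\R^n = E_s \oplus E_u$ and Gelfand's formula is the classical dynamical-systems route: it is fully deterministic, more elementary, and pinpoints exactly the obstruction (namely $S_0^\infty \subseteq E_s$). The paper's approach avoids having to set up the real invariant decomposition and handle the non-diagonalizable case explicitly, at the cost of invoking measure-theoretic machinery; your approach trades that machinery for a bit of linear algebra. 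Both are valid, and your version would arguably be the more natural one for readers with a dynamical-systems background.
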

\begin{proof}
Suppose for the sake of contradiction that $U_0$ is unbounded.
Because $U_0$ is a \newnewstuff{convex set}, there must exist a 
matrix $A_0 \in U_0$ and a nonzero matrix $D \in \R^{n \times n}$ such that $A_0 + \lambda D \in U_0$ for all $\lambda \geq 0$. 
Since $D \neq 0$, the nullspace of $D$
is not full-dimensional and therefore 
$S_0^\infty$ cannot be contained within it.
\newnewstuff{Therefore there exists a vector $x \in S_0^\infty$ such that $D x \neq 0$.}
Now observe that 
$(A_0 + \lambda D) x = A_0 x + \lambda D x \in S$
for every $\lambda \geq 0$, which contradicts
the compactness of $S$.

To prove that $U_0$ only contains matrices with spectral radius at most $1$, suppose for the sake of contradiction that there exists a matrix $\bar{A} \in U_0$ with an eigenvalue $\lambda \in \mathbb{C}$
satisfying $|\lambda| > 1$.
Because the spectral radius is dominated by the operator norm, we have that
for every nonnegative integer $k$, $\norm{\bar{A}^k} \geq \rho(\bar{A}^k) \geq |\lambda|^k$.
\newnewstuff{Let $R$ be a nonnegative scalar large enough such that $x \in S$ implies $\norm{x} \leq R$.
Let $g \in \R^n$ be a random vector with each entry an independent standard normal.
First, we claim that $\Pr(g \in S_0^\infty) = 0$.
Let $U \Sigma_k V^\transpose $ be the 
singular value decomposition of $\bar{A}^k$,
with the largest singular value placed on the first entry of $\Sigma_k$.
By the rotational invariance of Gaussian random vectors,
$\norm{\bar{A}^k g}$ has the same distribution as $\norm{\Sigma_k g}$.
Therefore, letting $g_1$ denote the first entry of the random vector $g$, we have
\begin{align*}
    \Pr(g \in S_0^\infty) & \leq \Pr(\norm{\bar{A}^k g} \leq R \quad \forall k\geq 0) \\
    &\leq \inf_{k \geq 0}\Pr(\norm{\bar{A}^k g} \leq R) \\
    &= \inf_{k \geq 0} \Pr(\norm{\Sigma_k g} \leq R) \\
    &\leq \inf_{k \geq 0} \Pr(\norm{\Sigma_k} |g_1| \leq R) \\
    &\leq \inf_{k \geq 0} \Pr(|\lambda|^k |g_1| \leq R) \\
    &= \inf_{k \geq 0} \Pr( |g_1| \leq R |\lambda|^{-k}) \\
    &= \inf_{k \geq 0} \frac{1}{\sqrt{2\pi}} \int_{-R|\lambda|^{-k}}^{R|\lambda|^{-k}} \exp(-s^2/2) \: ds \\
    &\leq \inf_{k \geq 0} \sqrt{\frac{2}{\pi}} R |\lambda|^{-k} = 0.
\end{align*}
Since $S_0^\infty$ is full-dimensional, its Lebesgue measure is positive.
Furthermore, we showed that the Gaussian measure of $S_0^\infty$ is zero.
Since the Lebesgue measure is absolutely continuous with respect to the Gaussian measure, this is a contradiction.
}
\end{proof}

In view of Proposition~\ref{prop:U0bounded.MarginallyStable}, if we want $S_0^{\infty}$ to be full-dimensional, we must assume that each matrix in $U_0$ has spectral radius less than or equal to one. We make the slightly stronger assumption that
$U_0$ only contains matrices with spectral radius less than one. (Recall that a matrix with spectral radius less than one is called \emph{stable} or \emph{Schur stable}.) Under this assumption, for the set $S_0^\infty$ to be nonempty, we need the origin to be in our safety region $S$ (as otherwise, all initial conditions would converge to the origin under \eqref{eq:linear dynamics} and eventually leave $S$). We work with the slightly stronger assumption that the origin belongs to the interior of $S$. Under this assumption, our representation of the polytope $S$ in \eqref{eq:S polyhedron} can be simplified (after potential rescaling) to:
\begin{equation}
\label{eq:S_polyhedron_all_ones}
S = \left \{ x \in \Rn \mid  h_i^\transpose  x \leq 1 \quad i = 1, \dots ,r \right \}.
\end{equation}

Before we state the main theorem of this section, we need to recall some
basic definitions.
Let $\mathbb{S}^{m \times m}$ denote the
space of $m \times m$ real-valued symmetric matrices. We say that
a matrix-valued function $M : \mathbb{R}^n \rightarrow \mathbb{S}^{m \times m}$ is a \emph{polynomial matrix}
if each entry $M_{ij}$ is a polynomial. 

\begin{definition}[SOS Polynomial and SOS Matrix]\label{def:sos}
A polynomial $p : \Rn \rightarrow \R$ is said to be a \emph{sum of squares} (SOS) if there exist some polynomials $q_1,\dots,q_r \newnewstuff{: \Rn \mapsto \R}$ such that $p = \sum_{i=1}^r q_i^2$.
A polynomial matrix $M : \Rn \rightarrow \mathbb{S}^{m \times m}$ is said to be a \emph{sum of squares matrix} (SOS matrix) if the scalar-valued polynomial $y^\transpose  M(x) y$ in the \newnewstuff{$n+m$} variables $(x,y)$ \newnewstuff{is SOS}.
\end{definition}

We can now present our main theorem of this section, which enables us to find infinite-step safe \newnewstuff{initialization} points. \newnewstuff{Our arguments thus far justify the assumptions that this theorem places on the uncertainty set $U_k$.}
%\st{TOOD: generalize this to $U_k$ not $U_0$.}
%
% $U_k = \{ A \in U_0 : A x_j = \trueA^i x_j, i=1, ..., n, j=1, ..., k \}$
% $y_{i,j} = \trueA^i x_j$.
%
% $U_k = \{ \Tr(A^\transpose  V_1), ..., \Tr(A^\transpose  V_r), A^i x_j = y_{i,j}, i=1, ..., n, j=1, ..., k \}$
%
\begin{theorem}\label{thm:linear}
Let the polyhedron $S \subseteq \R^n$
be as in \eqref{eq:S_polyhedron_all_ones}, and the polyhedron
$U_0 \subseteq \R^{n \times n}$ be
as in~\eqref{eq:U_0 polyhedron}.
\newnewstuff{For $t = 0,\dots,n$ and $\ell=1,\dots,k$, let $y_{t,\ell} \in \Rn$ be the $t$th vector in the $\ell$th observed trajectory; i.e., $y_{0,\ell}$ is the trajectory's initialization and $y_{t,\ell} = A^t y_{0,\ell}$.}
Let $\{e_p\}_{p=1}^n$ be the canonical basis vectors of $\Rn$.
For an even integer $d$, let
$\tilde{S}_{k,d}^{\infty}$ be the projection to $x$-space of the feasible region of the following optimization problem:

\begin{subequations}\label{eq:inf_linear_sdp}
\begin{align}
& \min_{x,Q,M_j,M_{t\ell p},\hat{M}_j,\hat{M}_{t\ell p},\sigma_{ij},\sigma_{it\ell p},\varepsilon} \quad  c^\transpose  x \tag{\ref{eq:inf_linear_sdp}}\\
& \mathrm{s.t.} \quad   Q(A) - A Q(A) A^\transpose  = \varepsilon I + M_0(A) + \sum_{j=1}^s M_j(A) (v_j - \Tr(V_j^\transpose  A)) \label{subeq:stabil} \\
&\quad + \sum_{t=1}^{n} \sum_{\ell=1}^{k} \sum_{p=1}^{n} M_{t \ell p}(A) e_p^\transpose (A y_{t-1,\ell} - y_{t,\ell}) \quad \forall A \in \Rnn \nonumber \\
& 1 - h_i^\transpose  Q(A) h_i = \sigma_{i0}(A) + \sum_{j=1}^s \sigma_{ij}(A) (v_j - \Tr(V_j^\transpose  A)) \label{subeq:polar dual} \\
&\quad 
 + \sum_{t=1}^{n} \sum_{\ell=1}^{k} \sum_{p=1}^{n} \sigma_{i t \ell p}(A) e_p^\transpose (A y_{t-1,\ell} - y_{t,\ell})
\quad i = 1, \dots, r \quad \forall A \in \Rnn \nonumber \\
& \begin{bmatrix}
 Q(A) &  x \\
 x^\transpose  & 1 
\end{bmatrix} = \hat{M}_0(A) + \sum_{j=1}^s \hat{M}_j(A) (v_j - \Tr(V_j^\transpose  A)) \label{subeq:ellipsoid} \\
&\quad + \sum_{t=1}^{n} \sum_{\ell=1}^{k} \sum_{p=1}^{n} \hat{M}_{t \ell p}(A) e_p^\transpose (A y_{t-1,\ell} - y_{t,\ell})
\quad \forall A \in \Rnn \nonumber \\
& \varepsilon > 0\label{subeq:strict},
\end{align}
\end{subequations}
\versionii{\begin{itemize}
\item where $Q(A), M_j(A)$ are $n \times n$ SOS matrices with degree at most $d$ for $j=0,\dots,s$, 
\item $M_{t \ell p}(A)$ are $n \times n$ symmetric polynomial matrices with degree at most $d$ for $t=1,\dots,n, \: \ell=1,\dots, k, \: p=1,\dots,n$, 
\item $\hat{M}_j(A)$ are $(n+1) \times (n+1)$ SOS matrices with degree at most $d$ for $j=0,\dots,s$,
\item $\hat{M}_{t \ell p}(A)$ are $(n+1) \times (n+1)$ symmetric polynomial matrices with degree at most $d$ for $t=1,\dots,n, \: \ell=1,\dots, k, \: p=1,\dots,n$, 
\item $\sigma_{ij}(A)$ are SOS polynomials with degree at most $d$ for $i=1,\dots,r,\: j=0,\dots,s$,
\item and $\sigma_{i t \ell p}(A)$ are polynomials with degree at most $d$ for $i=1,\dots,r,\: t=1,\dots,n, \:\ell=1,\dots,k, \: p=1,\dots,n.$
\end{itemize}}
\newnewstuff{
Then,
\begin{enumerate}
    \item[(i)] The program \eqref{eq:inf_linear_sdp} can be reformulated as a semidefinite program of size polynomial in the size of the input ($S$,$U_0$, $\{y_{t,\ell}\}$ and $c$).
    \item[(ii)] We have $\tilde{S}_{k,d}^{\infty} \subseteq S_k^\infty$ (i.e, any vector $x$ feasible to this semidefinite program is infinite-step safe).
    \item[(iii)] Furthermore, if $U_k$ is compact and contains only stable matrices, then, for large enough $d$, the set $\tilde{S}_{k,d}^{\infty}$ is full-dimensional.
\end{enumerate}
}
% %
% We have $\tilde{S}_{k,d}^{\infty} \subseteq S_k^\infty$ (implying in particular that the optimal value
% of \eqref{eq:inf_linear_sdp} is an upper bound on the optimal value of \eqref{eq:S_k_infinity}).
% %
% Furthermore, if $U_k$ is compact and contains only stable matrices, then, for large enough $d$, the set $\tilde{S}_{k,d}^{\infty}$ is full-dimensional.
% %
% Finally, \eqref{eq:inf_linear_sdp} can be reformulated as a semidefinite program of size polynomial in the size of the input ($S$,$U_0$, $\{y_{t,\ell}\}$ and $c$).
\end{theorem}
\newnewstuff{In words, \theoremref{thm:linear} allows us to optimize \versionii{the} initialization cost over semidefinite representable subsets of the set of infinite-step safe points.
While the theorem guarantees full-dimensionality of these subsets for large $d$, in our experience, small values of $d$ suffice for safe learning; see \sectionref{sec:infinite example}.}
We present the proof of this theorem in Section~\ref{sec:inf:sdp_reformulation}
after we review some results building up to it in Sections~\ref{sec:inf:rdo_review} and \ref{sec:inf:sos_review}.

\versionii{
\begin{remark}
We note that problem \eqref{eq:inf_linear_sdp}
can be modified so that
infinite-step safety is achieved in the presence of 
bounded measurement noise.
That is, suppose that instead of directly observing $y_{t,\ell} = \trueA y_{t-1,\ell}$, we observe
\[
\hat{y}_{t,\ell} = y_{t,\ell} + z_{t,\ell},
\]
where $z_{t,\ell}$ represents the noise in the measurement and $\trueA \in U_0$.
We can still give an SOS programming-based inner approximation of the infinite-step safety set in the case when we have $\|z_{t,\ell}\| \leq Z_{t,\ell}$, where $\norm{\cdot}$ is, e.g., any polynomial norm (see \cite{polynomial_norms} for a definition) or any norm whose unit ball is a polytope, and $Z_{t,\ell}$ is a given scalar.
To see this, observe that the vectors $\hat{y}_{t,\ell}$ will satisfy
\[
\hat{y}_{t,\ell} = \trueA (\hat{y}_{t-1,\ell} - z_{t-1,\ell}) + z_{t,\ell}.
\]
Now for example, if $\norm{\cdot}$ is the Euclidean norm, and if we have $\max_{A \in U_k} \|A\| \leq M_k$ for some constant $M_k$ (computed, e.g., by a semidefinite relaxation), we can derive the following inequality:
\begin{align*}
\|\hat{y}_{t,\ell} - \trueA \hat{y}_{t-1,\ell} \| &\leq \|\trueA\| \|z_{t-1,\ell}\| + \|z_{t,\ell}\| \\
&\leq M_k Z_{t-1,\ell} + Z_{t,\ell}.
\end{align*}
We can then adapt the methodology of \theoremref{thm:linear} by multiplying the SOS matrices and polynomials in semidefinite program \eqref{eq:inf_linear_sdp} by the polynomials\[\left \{A \mapsto (M_k Z_{t-1,\ell} + Z_{t,\ell})^2 - \left\| \hat{y}_{t,\ell} - A \hat{y}_{t-1,\ell} \right\|^2 \right \}_{t,\ell}\]
instead of $\{A \mapsto e_p^\transpose (A y_{t-1,\ell} - y_{t,\ell})\}_{t,\ell,p}$.
For this modified SOS program, claims (i) and (ii) of \theoremref{thm:linear} hold, and claim (iii) holds under the slightly stronger assumption that $U_0$ is compact.

%additional mild assumption that the defining polynomials of the set $U_k$ satisfy the Archimedian property (see Definition~\ref{def:Archimedian} below). This assumption is only slightly stronger than compactness of $U_k$ since if an upper bound on the radius of a ball containing $U_k$ is known, one can always satisfy the Archimedian property by adding one additional sum of squares multiplier to the SOS program; see, e.g.,~\cite{Laurent2009}.

We note that in the case of noisy measurements, Proposition~\ref{prop:krylov} does not apply anymore and it may be useful to use more than $n$ measurements from a trajectory.

\end{remark}

}

% %%%%%%%%%%%%%%%%
% S compact, Sinf full-D => U0 bounded (i.e. compact)

% S compact, Sinf full-D => U0 marginally stable

% we assume U0 stable (only slightly stronger)

% U0 stable, Sinf nonempty ==> 0 \in S

% we assume 0\in int(S) (only slightly stronger)

% under this assumption RHS of representation of S is all 1s vector
% %%%%%%%%%%%%%%%%
% %Sinf full-D + U0 stable ?=> U0 compact

\subsection[Review of a Result from Ahmadi and G{\"{u}}nl{\"{u}}k (2024)]{\texorpdfstring{Review of a Result from \cite{ahmadi18rdo}}{Review of a Result from Ahmadi and G{\"{u}}nl{\"{u}}k (2024)}}

\label{sec:inf:rdo_review}
The basis of \eqref{eq:inf_linear_sdp} comes from the approach of \cite{ahmadi18rdo}.
\newnewstuff{Let the safety set $S$ be as in \eqref{eq:S_polyhedron_all_ones}.}
For a single stable matrix $A$, this approach can be used to compute tractable inner approximations of $S^{\infty}(A)$.

Recall that a matrix $P\in \Snn$ is positive definite (resp. positive semidefinite) if for every nonzero vector $x\in \Rn$ we have that $x^\transpose  P x > 0$ (resp. $x^\transpose  P x \geq 0$); we indicate such a matrix with the notation $P \succ 0$ (resp. $P \succeq 0$).
Furthermore, we use the notation $P \succ Q$ (resp. $P \succeq Q$) if we have that $P-Q$ is positive definite (resp. positive semidefinite).
\newnewstuff{Consider} the following semidefinite program:
% \aaa{: introduce PD and PSD notation.[ say why we just work with first level (enough to give us full-dimensional)]}

\begin{equation}\label{eq:RDO}
\begin{aligned}
\min_{\newnewstuff{x \in \Rn, Q\in \Snn}} \quad & c^\transpose  x\\
\textrm{s.t.} \quad & Q \succ 0 \\
& Q \succeq A Q A^\transpose    \\
& h_i^\transpose  Q h_i \leq 1 \quad i = 1, \dots, r \\
& \begin{bmatrix}
 Q &  x \\
 x^\transpose  & 1 
\end{bmatrix} \succeq 0.
\end{aligned}
\end{equation}
\newnewstuff{
The following lemma is a special case of Theorem 2.11 from \cite{ahmadi18rdo}.
The proof carries some intuition behind the construction of \eqref{eq:inf_linear_sdp} and therefore we include it here.
}
% We now state a lemma which is a special case of Theorem 2.11 from \cite{ahmadi18rdo}.
% We will give a self-contained proof along the same lines as in that paper.
\begin{lemma}\label{lem:RDO}
\newnewstuff{Let $S \subset \Rn$ be as in \eqref{eq:S_polyhedron_all_ones} and $A\in\Rnn$.}
Let $\tilde{S}^{\infty}(A)$ be the projection to $x$-space of the feasible region of \eqref{eq:RDO}.
We have $\tilde{S}^{\infty}(A) \subseteq S^{\infty}(A)$.
\end{lemma}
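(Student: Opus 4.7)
The plan is to exhibit, for each feasible $(x, Q)$ of \eqref{eq:RDO}, an $A$-invariant ellipsoid that is contained in $S$ and that contains $x$; this implies $A^t x \in S$ for all $t \geq 0$, establishing $x \in S^\infty(A)$. Concretely, I would define the ellipsoid
\begin{equation*}
E_Q \defn \{ y \in \Rn \mid y^T Q^{-1} y \leq 1 \},
\end{equation*}
which is well-defined because $Q \succ 0$ by feasibility.

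First I would check $A$-invariance of $E_Q$. Given $y \in E_Q$, I want to show $Ay \in E_Q$, i.e., $(Ay)^T Q^{-1} (Ay) \leq 1$. The constraint $Q \succeq A Q A^T$, combined with the fact that $Q \succ 0$, is equivalent (by pre- and post-multiplying by $Q^{-1}$ and using standard Lyapunov manipulations, or equivalently via a Schur complement argument on $\begin{bmatrix} Q & AQ \\ QA^T & Q \end{bmatrix} \succeq 0$) to $Q^{-1} \succeq A^T Q^{-1} A$, which gives $y^T Q^{-1} y \geq y^T A^T Q^{-1} A y$ for all $y$. Hence $Ay \in E_Q$ whenever $y \in E_Q$.

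Second, I would show $E_Q \subseteq S$. For each facet $h_i^T y \leq 1$ of $S$, the maximum of $h_i^T y$ over $E_Q$ equals $\sqrt{h_i^T Q h_i}$ (a standard fact about support functions of ellipsoids), and the constraint $h_i^T Q h_i \leq 1$ gives $\sqrt{h_i^T Q h_i} \leq 1$, so every point of $E_Q$ satisfies all facet inequalities.

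Third, the Schur-complement constraint $\begin{bmatrix} Q & x \\ x^T & 1 \end{bmatrix} \succeq 0$, together with $Q \succ 0$, is equivalent to $1 - x^T Q^{-1} x \geq 0$, i.e., $x \in E_Q$. Combining the three facts, $A^t x \in E_Q \subseteq S$ for every nonnegative integer $t$, so $x \in S^\infty(A)$, proving $\tilde{S}^\infty(A) \subseteq S^\infty(A)$. None of the steps presents a genuine obstacle; the only point requiring a bit of care is the equivalence of $Q \succeq A Q A^T$ with $A^T Q^{-1} A \preceq Q^{-1}$, which is the standard congruence/Schur argument using $Q \succ 0$.
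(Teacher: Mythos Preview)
Your proposal is correct and follows essentially the same approach as the paper: define the ellipsoid $E_Q=\{y\mid y^TQ^{-1}y\le 1\}$, show it is contained in $S$, contains $x$ (via Schur complement), and is $A$-invariant (via the equivalence $Q\succeq AQA^T\Leftrightarrow Q^{-1}\succeq A^TQ^{-1}A$). The only cosmetic difference is that the paper argues $E_Q\subseteq S$ via polar duality rather than the support-function computation, but these are equivalent.
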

\begin{proof}
Let $E \defn \{x \mid x^\transpose  Q^{-1} x \leq 1\}$; first we show that the \newnewstuff{constraints $h_i^\transpose  Q h_i \leq 1$ for $i = 1, \dots, r$ imply the set inclusion $E \subseteq S$.
For a set $T \subseteq \Rn$, we define its \emph{polar} $T^\circ$ as $T^\circ \defn \{y \mid y^\transpose  x \leq 1, \forall x\in T\}$.
One can check that $E \subseteq S$ if and only if $S^\circ \subseteq E^\circ$, $S^\circ = \text{conv}(\{h_i\}_{i=1}^r)$, and $E^\circ = \{x \mid x^\transpose  Q x \leq 1\}$.}
Thus, for each $i$, the constraint $h_i^\transpose  Q h_i \leq 1$ implies $h_i \in E^\circ$.
By convexity, it follows that $S^\circ \subseteq E^\circ$ and therefore $E \subseteq S$ as desired.

Note that \newnewstuff{by the Schur complement lemma,} the constraint $\begin{bmatrix}
 Q &  x \\
 x^\transpose  & 1 
\end{bmatrix} \succeq 0$ implies that $x \in E$.
Thus, $x$ is in the safety region.
To show that the trajectory remains safe for all time it suffices to show that the set $E$ is invariant under the dynamics, i.e. that if $\bar{x}$ is in $E$, then so is $A \bar{x}$.
Fix an arbitrary point $\bar{x} \in E$.
By two applications of the Schur complement lemma, the constraint $Q \succeq A Q A^\transpose $ is equivalent to $Q^{-1} \succeq A^\transpose  Q^{-1} A$.
This linear matrix inequality implies that $\bar{x}^\transpose Q^{-1}\bar{x} \geq~\bar{x}^\transpose A^\transpose  Q^{-1} A\bar{x}$.
Thus, we have $(A\bar{x})^\transpose  Q^{-1} (A\bar{x}) \leq \bar{x}^\transpose Q^{-1}\bar{x} \leq 1$, and hence $A\bar{x} \in E$ as desired.
\end{proof}
% We will call the set of $x$ feasible to this program $\tilde{S}^{\infty}(A)$; here we have $\tilde{S}^{\infty}(A) \subseteq S^{\infty}(A)$.
% One should see the cited paper for a detailed explanation, but informally, this program works since the function $L(x) = x^\transpose  Q^{-1} x$ is a \emph{Lyapunov function} of the dynamics.
\newnewstuff{The approach of \cite{ahmadi18rdo} and its extensions lead to infinite-safe sets for dynamics governed by a single matrix, or a group of matrices where the ``joint spectral radius'' is less than one.
Our \theoremref{thm:linear} extends their approach to the case where each individual matrix in $U_k$ is stable, which is a weaker condition than the joint spectral radius of the matrices in $U_k$ being less than one.}
This is the relevant setting for us which is not covered by \cite{ahmadi18rdo}.

We also note that the approach of \cite{ahmadi18rdo} gives a hierarchy of inner approximations to $S^{\infty}(A)$.
However, the first level of the hierarchy is sufficient for our goal of finding full-dimensional inner approximations.

\subsection{Review of Putinar's Positivstellensatz}
\label{sec:inf:sos_review}

In this subsection, we briefly review Putinar's Positivstellensatz and its matrix generalization due to Scherer and Hol which, when combined with \lemmaref{lem:RDO}, help us approximate the feasible region of \eqref{eq:S_k_infinity} with semidefinite programs. These theorems involve SOS polynomials and matrices (cf. Definition~\ref{def:sos}), and our interest in them stems from the following well-known fact: the constraint that an unknown polynomial or a polynomial matrix \newnewstuff{of a given degree} be SOS and satisfy a set of affine inequalities can be cast as an semidefinite program of tractable size; see, e.g., \cite{pablothesis}.

\begin{definition}[Archimedian Property]\label{def:Archimedian}
We say that a set of $n$-variate polynomials $\mathcal{G} = \{g_1,\dots,g_m\}$ satisfies the \emph{Archimedian property} if there exists a scalar $R$ and SOS polynomials $\sigma_0,\sigma_1,\dots,\sigma_m$ such that
\[
R^2 - \sum_{i=1}^n x_i^2 = \sigma_0(x) + \sum_{j=1}^m \sigma_j(x) g_j(x) \quad \forall x \in \Rn.
\]
\end{definition}

Note that the Archimedian property implies that the set
\begin{equation}\label{eq:semialgebraic}
K(\mathcal{G}) \defn \{x \in \Rn \mid g_i(x) \geq 0 \quad i=1,\dots,m \}
\end{equation}
is compact.
% \begin{definition}[Closed Basic Semialgebraic Set]
% A closed basic semialgebraic set is a set of the type:
% % For a set of polynomials $\mathcal{G} = \{g_1,\dots,g_m\}$, the set 
% \[
% K(\mathcal{G}) = \{x \in \Rn \mid g_i(x) \geq 0 \quad i=1,\dots,m \}
% \]
% where $g_i$ are polynomial functions.
% % is a closed basic semialgebraic set.
% \end{definition}
% Given a set of polynomials $\mathcal{G} = \{g_1,\dots,g_m\}$, we say they satisfy the \emph{Archimedian property} if there exists a scalar $r \in \R$ and SOS polynomials $\sigma_0(x),\sigma_1(x),\dots,\sigma_m(x)$ such that
% \[
% r^2 - \sum_{i=1}^n x_i^2 = \sigma_0(x) + \sum_{j=1}^m \sigma_j(x) g_j(x) \quad \forall x \in \Rn.
% \]
%Suppose we have some $\mathcal{G} = \{g_1,\dots,g_m\}$.
Furthermore, it is known that if $g_1,\dots,g_m$ are affine polynomials and if $K(\mathcal{G})$ is compact, then $\mathcal{G}$ satisfies the Archimedian property \citep[see, e.g.,][]{Laurent2009}.
Note that if we let $\mathcal{G} = \{A \mapsto v_j - \Tr(V_j^\transpose  A)\}_{j=1}^s$, then $U_0$ from \eqref{eq:U_0 polyhedron} equals $K(\mathcal{G})$ and $\mathcal{G}$ satisfies the Archimedian property.

\begin{theorem}[Putinar's Positivstellensatz \citep{putinar}]\label{thm:putinar}
Let $\mathcal{G} = \{g_1,\dots,g_m\}$ be a set of $n$-variate polynomials satisfying the Archimedian property and let $K(\mathcal{G})$ be as in \eqref{eq:semialgebraic}.
For any polynomial $p : \Rn \rightarrow \R$, we have $p(x) > 0$ for all $x\in K(\mathcal{G})$ if and only if there exists a positive scalar $\varepsilon$ and SOS polynomials $\sigma_0,\sigma_1,\dots,\sigma_m$ such that
\[
p(x) = \varepsilon + \sigma_0(x) + \sum_{j=1}^m \sigma_j(x) g_j(x) \quad \forall x \in \Rn.
\]
\end{theorem}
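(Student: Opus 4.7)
The easy direction is clear: if $p = \varepsilon + \sigma_0 + \sum_j \sigma_j g_j$ with each $\sigma_j$ SOS and $\varepsilon > 0$, then on $K(\mathcal{G})$ every $g_j \geq 0$ and every $\sigma_j \geq 0$, so $p(x) \geq \varepsilon > 0$ on $K(\mathcal{G})$. The content is the forward direction, which I would prove in three stages: (i) promote the Archimedean hypothesis to a uniform bounds statement for every polynomial, (ii) use Hahn--Banach to produce an obstructing linear functional, and (iii) represent this functional as integration against a probability measure supported on $K(\mathcal{G})$ in order to derive a contradiction.

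Let $M(\mathcal{G}) := \{\sigma_0 + \sum_{j=1}^m \sigma_j g_j : \sigma_j \text{ SOS}\}$ denote the quadratic module generated by $\mathcal{G}$. The first step is purely algebraic: starting from $R^2 - \sum_i x_i^2 \in M(\mathcal{G})$, I would first show $R - x_i \in M(\mathcal{G})$ by the identity $2R(R - x_i) = (R - x_i)^2 + (R^2 - x_i^2)$, whose right-hand side lies in $M(\mathcal{G})$ since $(R - x_i)^2$ is SOS and $R^2 - x_i^2 = (R^2 - \sum_j x_j^2) + \sum_{j \neq i} x_j^2 \in M(\mathcal{G})$. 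Then, by induction on degree together with closure of $M(\mathcal{G})$ under addition and under multiplication by squares, I would conclude that for every polynomial $q \in \mathbb{R}[x]$ there is an $N > 0$ with $N \pm q \in M(\mathcal{G})$.

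Second, suppose for contradiction that $p - \varepsilon \notin M(\mathcal{G})$ for every $\varepsilon > 0$. Since $1$ is an algebraic interior point of $M(\mathcal{G})$ by the previous step, a standard Hahn--Banach/Eidelheit separation argument applied in $\mathbb{R}[x]$ (with its finest locally convex topology, or equivalently the inductive limit of degree-$d$ subspaces) yields a nonzero linear functional $L : \mathbb{R}[x] \to \mathbb{R}$ with $L(q) \geq 0$ for every $q \in M(\mathcal{G})$, $L(1) > 0$ (so I may rescale $L(1) = 1$), and $L(p) \leq 0$.

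Third, I would use the Archimedean bounds $|L(q)| \leq N_q \cdot L(1)$ (where $N_q \pm q \in M(\mathcal{G})$ from step one) to show that $L$ extends continuously from $\mathbb{R}[x]$ to $C(K(\mathcal{G}))$ in the sup-norm, and then invoke the Riesz representation theorem to obtain a Borel probability measure $\mu$ supported on $K(\mathcal{G})$ with $L(q) = \int q\,d\mu$ for every polynomial $q$. Since $p > 0$ on the compact set $K(\mathcal{G})$, it attains a positive minimum there, whence $L(p) = \int p\,d\mu > 0$, contradicting $L(p) \leq 0$. The main obstacle is the third stage: one must check that the sup-norm bound really does let $L$ descend to a positive functional on $C(K(\mathcal{G}))$ and that the resulting measure has support inside $K(\mathcal{G})$ rather than on a larger compact set---both points rely in an essential way on the Archimedean property applied not just to $p$ but to every $g_j$ and to polynomial cutoffs that localize the support.
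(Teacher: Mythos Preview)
The paper does not prove this theorem; it is quoted as a classical result (with a citation to Putinar) and invoked as a black box in the proofs of the main SDP theorems, so there is no in-paper argument to compare against.

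Your outline is the standard functional-analytic route and stages (i) and (ii) are correct as sketched. The concern you flag about stage (iii) is the real issue: the Archimedean bound $N_q \pm q \in M(\mathcal{G})$ gives $|L(q)| \leq N_q$, but $N_q$ depends on the algebraic structure of $q$ and is not controlled by $\|q\|_{\infty, K(\mathcal{G})}$, so you cannot directly read off sup-norm continuity on $K(\mathcal{G})$ from it. The usual remedy is to bypass $C(K(\mathcal{G}))$ entirely. One option: from $L(q^2) \geq 0$ and $R \pm x_i \in M(\mathcal{G})$ one gets Cauchy--Schwarz and the bounds $|L(x_i^k)| \leq R^k$, which verify Carleman's condition, so Haviland's theorem yields a representing measure $\mu$ on $\mathbb{R}^n$; then $L(\sigma g_j) \geq 0$ for every SOS $\sigma$ forces $\int g_j\, d\mu_\sigma \geq 0$ for suitable localizations $\mu_\sigma$, which pins $\mathrm{supp}(\mu) \subseteq K(\mathcal{G})$. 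A second option is a GNS construction: the multiplication-by-$x_i$ operators on the pre-Hilbert space $\mathbb{R}[x]/\{q : L(q^2)=0\}$ are commuting and bounded self-adjoint (boundedness from $R^2 - \sum_i x_i^2 \in M(\mathcal{G})$), and the spectral theorem produces $\mu$ directly with support in the joint spectrum, which one then checks lies in $K(\mathcal{G})$. Either route closes the gap you identified; the step you proposed, extending $L$ straight to $C(K(\mathcal{G}))$ via the raw $N_q$ bounds, does not go through as written.
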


\begin{theorem}[Matrix Putinar's Positivstellensatz \citep{HolScherer}]\label{thm:matrix psatz}
Let $\mathcal{G} = \{g_1,\dots,g_m\}$ be a set of $n$-variate polynomials satisfying the Archimedian property and let $K(\mathcal{G})$ be as in \eqref{eq:semialgebraic}.
For any polynomial matrix $M : \Rn \rightarrow \mathbb{S}^{r \times r}$, we have $M(x) \succ 0$ for all $x\in K(\mathcal{G})$ if and only if there exists a positive scalar $\varepsilon$ and SOS matrices $S_0, S_1, \dots, S_m$ such that
\[
M(x) = \varepsilon I + S_0(x) + \sum_{j=1}^m S_j(x) g_j(x) \quad \forall x \in \Rn.
\]
\end{theorem}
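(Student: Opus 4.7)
The proof splits into two directions.

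For the (\emph{if}) direction, suppose $M(x) = \varepsilon I + S_0(x) + \sum_{j=1}^{m} S_j(x) g_j(x)$ with $\varepsilon > 0$ and $S_0,\ldots,S_m$ SOS matrices. An SOS matrix $S$ satisfies $y^T S(x) y \geq 0$ for all $x$ and $y$, so $S(x) \succeq 0$ pointwise. Fixing any $x \in K(\mathcal{G})$, each $g_j(x) \geq 0$ and each $S_j(x) \succeq 0$, so every summand $S_j(x) g_j(x) \succeq 0$; combined with $S_0(x) \succeq 0$ and $\varepsilon I \succ 0$, this yields $M(x) \succ 0$.

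For the (\emph{only if}) direction, the plan is to reduce to the scalar Putinar Positivstellensatz (\theoremref{thm:putinar}). I would introduce auxiliary variables $y = (y_1,\ldots,y_r)$ and consider the scalar polynomial $p(x,y) \defn y^T M(x) y$, which is bi-homogeneous of degree $2$ in $y$. Since $M$ is continuous and $M \succ 0$ on the compact set $K(\mathcal{G})$, there exists $\delta > 0$ with $M(x) \succeq \delta I$ on $K(\mathcal{G})$; consequently $p(x,y) \geq \delta$ on the compact set $K(\mathcal{G}) \times \{y : \|y\|^2 = 1\}$. Augmenting $\mathcal{G}$ with the pair $\{1 - \|y\|^2,\;\|y\|^2 - 1\}$ describes this set and preserves the Archimedean property (the original $\mathcal{G}$ is Archimedean, and the added constraints bound the $y$-variables). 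Invoking \theoremref{thm:putinar} then produces
\begin{equation*}
y^T M(x) y = \varepsilon + \sigma_0(x,y) + \sum_{j=1}^{m} \sigma_j(x,y)\, g_j(x) + \tau(x,y)\bigl(\|y\|^2 - 1\bigr),
\end{equation*}
for some $\varepsilon > 0$, SOS polynomials $\sigma_0,\ldots,\sigma_m$ in $(x,y)$, and a polynomial $\tau$ obtained by combining the multipliers of the two one-sided constraints $1-\|y\|^2 \geq 0$ and $\|y\|^2-1 \geq 0$.

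To convert this scalar identity into a matrix identity, I would use the following degree-extraction lemma: a symmetric matrix polynomial $N(x)$ is an SOS matrix if and only if $y^T N(x) y$ is SOS in $(x,y)$. For the nontrivial direction, one writes $y^T N y = \sum_k q_k(x,y)^2$ and decomposes each $q_k$ by its degree in $y$; comparing bi-homogeneous-in-$y$ components of both sides forces $q_k(x,0) = 0$, and the bi-degree-$2$-in-$y$ component then reduces to $y^T N y = \sum_k (L_k(x)^T y)^2$ for polynomial vectors $L_k$, giving $N = \sum_k L_k L_k^T$.

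The technical heart — and the main obstacle — is bridging these two ingredients by eliminating the nuisance multiplier $\tau(x,y)(\|y\|^2-1)$ while preserving SOS matrix structure coefficient-by-coefficient. I would match the bi-homogeneous-in-$y$ components of both sides of the Putinar identity to obtain a recursion expressing each component $\tau^{(2k)}$ in terms of the components $\sigma_j^{(2\ell)}$ for $\ell \leq k$ and lower components of $\tau$. Multiplying through by a sufficiently high power of $\|y\|^2$ causes the recursion to telescope and the $\tau$-term to cancel, producing an identity of the form
\begin{equation*}
\|y\|^{2D}\, y^T M(x) y = \varepsilon\, \|y\|^{2D+2} + \Sigma_0(x,y) + \sum_{j=1}^{m} \Sigma_j(x,y)\, g_j(x),
\end{equation*}
in which each $\Sigma_j$ is SOS in $(x,y)$ and bi-homogeneous of degree $2D+2$ in $y$. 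Bi-homogeneity allows the extraction lemma (applied to each coefficient in a suitably rescaled form) to strip the common $\|y\|^{2D}$ factor and recover the desired matrix representation $M = \varepsilon I + S_0 + \sum_j S_j g_j$ with SOS matrix coefficients. Making this bookkeeping rigorous — in particular verifying that the $\Sigma_j$ remain SOS throughout the manipulations and that dividing out $\|y\|^{2D}$ yields genuine matrix polynomials in $x$ — is the crux of Hol and Scherer's original argument; an alternative route uses a Hahn–Banach separation argument together with a matrix-valued analogue of Haviland's theorem to obtain existence of the representation abstractly at the cost of explicit degree bounds.
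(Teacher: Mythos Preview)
The paper does not prove this theorem; it is quoted verbatim as a known result from \citet{HolScherer} and used as a black box in the proofs of \theoremref{thm:linear} and \theoremref{thm:inf_nonlinear}. There is therefore no ``paper's own proof'' to compare against.

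As for the substance of your sketch: the \emph{if} direction is complete and correct. For the \emph{only if} direction, your reduction to the scalar Positivstellensatz via $p(x,y)=y^T M(x)y$ on $K(\mathcal{G})\times\{\|y\|^2=1\}$ is the standard opening move, and your degree-extraction lemma (SOS matrix $\Leftrightarrow$ $y^TN(x)y$ is SOS) is also correct and well known. However, you explicitly flag the crux --- eliminating the multiplier $\tau(x,y)(\|y\|^2-1)$ while preserving SOS structure --- as unresolved, and your description of the telescoping argument is heuristic rather than a proof. In particular, the claim that ``multiplying through by a sufficiently high power of $\|y\|^{2}$ causes the recursion to telescope'' and that the resulting $\Sigma_j$ remain SOS is exactly the nontrivial step, and you have not supplied it. So what you have written is an informed outline of the Hol--Scherer argument, not a proof; since the paper itself only cites the result, your sketch already goes further than the paper does, but it is not self-contained.
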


\subsection{Proof of Theorem~\ref{thm:linear}}
\label{sec:inf:sdp_reformulation}

% We modify \eqref{eq:RDO} by using a different matrix $Q$ for each matrix $A$ in $U_0$.
% By letting $Q(A)$ be such a matrix, we can rewrite the problem:
% \begin{equation}\label{eq:RDO_parametrized}
% \begin{aligned}
% \min_{x,Q} \quad & c^\transpose  x\\
% \textrm{s.t.} \quad & Q(A) \succ 0 \quad \forall A \in U_0\\
% & Q(A) \succeq A Q(A) A^\transpose   \quad \forall A \in U_0 \\
% & h_i^\transpose  Q(A) h_i \leq 1 \quad i = 1, \dots, r \quad \forall A \in U_0 \\
% & \begin{bmatrix}
%  Q(A) &  x \\
%  x^\transpose  & 1 
% \end{bmatrix} \succeq 0 \quad \forall A \in U_0.
% \end{aligned}
% \end{equation}
% In particular, if we let $Q(A)$ be a polynomial function of $A$, then each of the parametrized semidefinite constraints can be dealt with via a matrix positivstellensatz.
% This leads to a hierarchy of SOS constraints and ultimately, to regular semidefinite constraints.
% It remains to be shown, however, that a polynomial matrix $Q(A)$ always exists; this leads us to the following theorem.

In addition to Theorems~\ref{thm:putinar}~and~\ref{thm:matrix psatz}, the proof of claim (iii) in \theoremref{thm:linear} also relies on the following technical
lemma.
\begin{lemma}
\label{lem:polynomial_lyapunov}
Let $U \subset \Rnn$ be a compact set of matrices.
Then, every matrix $A \in U$ is stable if and only if there exists a $\nn$ SOS matrix $P:\Rnn \mapsto \Snn$ such that
\begin{enumerate}
    \item $P(A) \succ 0 \quad \forall A \in U$,
    \item $P(A) - A^\transpose  P(A) A \succ 0 \quad \forall A \in U$.
\end{enumerate}
\end{lemma}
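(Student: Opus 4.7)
The \emph{if} direction is a standard Lyapunov argument. Fix any $A \in U$ and set $V(x) := x^T P(A) x$. By condition~(1), $V$ is a positive definite quadratic form in $x$; by condition~(2), $V(Ax) - V(x) = -x^T\bigl(P(A) - A^T P(A) A\bigr)x < 0$ for every nonzero $x$. Hence $V$ is a strict quadratic Lyapunov function for the linear system $x_{t+1} = A x_t$, so $A$ is Schur stable.

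For the \emph{only if} direction, my plan is to construct $P$ explicitly as a finite truncation of the discrete-time Lyapunov series. For a positive integer $N$, set
\[
P_N(A) := \sum_{k=0}^{N-1} (A^T)^k A^k.
\]
This is a polynomial matrix in the $n^2$ entries of $A$, and it is an SOS matrix because $y^T P_N(A) y = \sum_{k=0}^{N-1} \norm{A^k y}^2$ is visibly a sum of squares of polynomials in the $n^2 + n$ variables $(A, y)$. Moreover $P_N(A) \succeq I$ globally (the $k=0$ term contributes the identity), which in particular gives condition~(1) on $U$. A telescoping computation yields $P_N(A) - A^T P_N(A) A = I - (A^T)^N A^N$, so condition~(2) reduces to the uniform bound: there exists $N$ such that the spectral norm $\norm{A^N} < 1$ for every $A \in U$.

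Establishing this uniform decay is the only real obstacle, and the plan is to combine Gelfand's formula $\lim_k \norm{A^k}^{1/k} = \rho(A)$ with compactness of $U$. For each $A \in U$, stability ($\rho(A) < 1$) produces an integer $k_A$ with $\norm{A^{k_A}} \le 1/2$; continuity of $B \mapsto \norm{B^{k_A}}$ then extends this to $\norm{B^{k_A}} \le 3/4$ on an open neighborhood $\mathcal{V}_A$ of $A$. Extracting a finite subcover $\{\mathcal{V}_{A_i}\}_{i=1}^{K}$ of $U$ and setting $k^\star = \max_i k_{A_i}$ and $M = \max_{A \in U} \norm{A}$, submultiplicativity of the spectral norm gives, for any $B \in U \cap \mathcal{V}_{A_i}$ and any $N = m\, k_{A_i} + s$ with $0 \le s < k_{A_i}$, the estimate $\norm{B^N} \le (3/4)^m M^{k^\star}$. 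This bound tends to zero as $N \to \infty$ at a rate independent of $B \in U$, so any sufficiently large $N$ certifies $\norm{A^N} < 1$ uniformly on $U$, and $P_N$ then witnesses the lemma.
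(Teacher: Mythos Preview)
Your proof is correct and follows the same overall construction as the paper: both build $P_N(A) = \sum_{k} (A^T)^k A^k$ as a truncated discrete Lyapunov series, observe that it is an SOS matrix with $P_N \succeq I$, and reduce condition~(2) via telescoping to the uniform bound $\sup_{A \in U} \norm{A^N} < 1$ for some large $N$. The only substantive difference is how that uniform decay is established. The paper writes $A = QTQ^{-1}$ in Schur form and invokes an explicit combinatorial bound on $\norm{T^N}$ (Corollary~3.15 of Dowler) to obtain a quantitative estimate in terms of $n$, $R := \max_{A\in U}\rho(A)$, and $M := \max_{A\in U}\norm{A}_\infty$. You instead combine Gelfand's formula with a finite-subcover compactness argument, which is more elementary and self-contained but non-constructive: it certifies the existence of a suitable $N$ without bounding it in terms of the data. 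Either route closes the lemma; the paper's version has the advantage of yielding, in principle, an explicit degree bound on the SOS certificate, which is relevant downstream when one wants to size the semidefinite program \eqref{eq:inf_linear_sdp}.
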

\begin{proof}
[``If'']
It is straightforward to check that the conditions imply that for any matrix $A \in U$, the positive definite Lyapunov function $V_A(x) = x^\transpose  P(A) x$ satisfies $V_A(Ax) < V(x)$ for all $x \neq 0$.
% One can check that for a fixed $A$, under the dynamics $x_{t+1} = Ax_t$, the quantity $x_t^\transpose  P(A) x_t$ will be decreasing with $t \rightarrow \infty$ and will only go to zero if $x_t$ goes to zero. 
The stability of $A$ then follows from Lyapunov's stability theorem; see, e.g.,~\cite[Theorem 4.3]{zak}.
\par
[``Only if'']
For a positive integer $N$, let the SOS matrix $P_N:\Rnn \rightarrow \Snn$ be defined as follows:
\[
P_N(A) = \sum_{k=0}^N (A^k)^\transpose  (A^k).
\]
Clearly we have $P_N(A) \succ 0$ for each matrix $A \in U$ since each summand is positive semidefinite and the zeroth summand is the identity matrix.
We claim that for sufficiently large $N$, $P_N(A)$ will satisfy
$P_N(A) - A^\transpose  P_N(A) A \succ 0$ for
all $A \in U$.
\newnewstuff{Observe}
\[
P_N(A) - A^\transpose  P_N(A) A = I - (A^{N+1})^\transpose  (A^{N+1}).
\]
To show that $P_N(A) - A^\transpose  P_N(A) A \succ 0$ for each $A \in U$, \newnewstuff{we prove}
\[
\|(A^{N+1})^\transpose  (A^{N+1})\| < 1 \quad \forall A \in U.
\]
For a matrix $B$, 
let $\|B\|_{\infty} \defn \max_{ij} |B_{ij}|$.
Define the scalars $R, M$ as
\[
R \defn \max_{A \in U} \rho(A), \quad M \defn \max_{A \in U} \|A\|_{\infty}.
\]
Since each matrix $A \in U$ is stable
and $U$ is compact, $R < 1$.
Since $U$ is compact, $M < \infty$.
Now fix a matrix $A \in U$ and write $A=QTQ^{-1}$, where $Q \in \mathbb{C}^{n \times n}$ is unitary and $T \in \mathbb{C}^{n \times n}$ is upper triangular (this ``Schur decomposition'' always exists).
Observe that $\|A^N\|=\|T^N\|$.
\newnewstuff{We can bound} the norm of powers of a triangular matrix as follows (see Corollary 3.15 of~\cite{Dowler2013BoundingTN}):
\[
\|T^N\| \leq \sqrt{n} \sum_{j=0}^{n-1} \binom{n-1}{j} \binom{N}{j} \|T\|_\infty^j \rho(T)^{N-j}.
\]
In particular, we have
\begin{equation}
\label{eq:polynomial_lyapunov_bound}
\|A^N\| \leq \sqrt{n} \sum_{j=0}^{n-1} \binom{n-1}{j} \binom{N}{j} M^j R^{N-j}
\leq \sqrt{n} \sum_{j=0}^{n-1} \binom{n-1}{j} N^j M^j R^{N-j}.
\end{equation}
%As $N$ increases, observe that $\binom{N}{j}$ grows at most polynomially fast, but $R^{N-j}$ goes to zero exponentially fast.
%Thus, the entire upper bound in \eqref{eq:polynomial_lyapunov_bound} goes to zero as $N$ goes to infinity.
Inequality~\eqref{eq:polynomial_lyapunov_bound}
implies that $\lim_{N \rightarrow \infty} \| A^N \| = 0$.
Therefore, we can choose $N$ large enough such that
\[
\|(A^{N+1})^\transpose  (A^{N+1})\| \leq \|A^{N+1}\|^2 < 1.
\]
\end{proof}

We are now able to present the proof of the main result of this section.

\begin{proof}[of \theoremref{thm:linear}]

\newnewstuff{(i)} Recall that for any fixed degree $d$, the SOS constraints in \versionii{\eqref{eq:inf_linear_sdp}} can be reformulated as semidefinite programming constraints of size polynomial in $n$; see, e.g., \cite{pablothesis}.
The constraints in \eqref{subeq:stabil}, \eqref{subeq:polar dual}, \eqref{subeq:ellipsoid} can be imposed by coefficient matching via a number of linear equations bounded by a polynomial function of the size of the input $(S,U_0,\{y_{t,\ell}\})$.
% We can deal with the ``$\forall A \in \Rnn$" constraints via coefficient matching and we can convert the SOS constraints to SDP constraints.
% Thus we can formulate \eqref{eq:inf_linear_sdp} as an SDP which we can tractably solve with modern optimization software.
The constraint that $\varepsilon > 0$ can be rewritten as the constraint $\begin{bmatrix} \varepsilon &1 \\ 1& \delta \end{bmatrix} \succeq 0 $ for a new variable $\delta$.
Therefore, \newnewstuff{for any fixed degree $d$,} \eqref{eq:inf_linear_sdp} is a semidefinite program of size polynomial in the size of the input $(S,U_0,\{y_{t,\ell}\},c)$.

\newnewstuff{(ii)} Let $(x,Q,M_j,M_{t\ell p},\hat{M}_j,\hat{M}_{t\ell p},\sigma_{ij},\sigma_{it\ell p},\varepsilon)$ be feasible to \eqref{eq:inf_linear_sdp}.
It is straightforward to check that 
for every $A \in U_k$, the tuple $(x, Q(A))$ satisfies the following constraints
\begin{equation}\label{eq:RDO_parametrized}
\begin{aligned}
& Q(A) \succ 0 \\
& Q(A) \succeq A Q(A) A^\transpose \\
& h_i^\transpose  Q(A) h_i \leq 1 \quad i = 1, \dots, r \\
& \begin{bmatrix}
 Q(A) &  x \\
 x^\transpose  & 1 
\end{bmatrix} \succeq 0.
\end{aligned}
\end{equation}
Therefore, by \lemmaref{lem:RDO}, we have $x \in \tilde{S}^\infty (A) \subseteq S^\infty (A)$ for every $A \in U_k$.
\newnewstuff{Hence,} 
\[
x \in \bigcap_{A \in U_k} S^\infty (A) = S_k^\infty.
\]
This implies that $\tilde{S}_{k,d}^\infty \subseteq S_k^\infty$.

\newnewstuff{(iii)} Suppose that $U_k$ is compact and contains only stable matrices. It follows that the set
\[
U_k^\transpose  \defn \{ A^\transpose  \mid A \in U_k \}
\]
is also compact and contains only stable matrices. By Lemma~\ref{lem:polynomial_lyapunov} applied to $U_k^\transpose $,
there exists an SOS matrix $P(A)$ which satisfies
\begin{align*}
P(A) &\succ 0 \quad \forall A \in U_0^\transpose  \\
P(A) &\succ A^\transpose  P(A) A  \quad \forall A \in U_0^\transpose .
\end{align*}
Now by defining $Q(A) \defn P(A^\transpose )$, we observe that
\begin{align*}
Q(A) &\succ 0 \quad \forall A \in U_0 \\
Q(A) &\succ A Q(A) A^\transpose   \quad \forall A \in U_0.
\end{align*}
%Therefore, our theorem guarantees the existence of the sort of polynomial matrix needed for \eqref{subeq:stabil}.

% Note that by the construction in the proof of \lemmaref{lem:polynomial_lyapunov}, we can take $Q(A)$ to be an SOS matrix.
Analogously to how we derived linear constraints in \eqref{eq:two-step linear constraints}, we can rewrite the description of $U_k$ as
\[
U_k = \{ A \in U_0 \mid A y_{t-1,\ell} = y_{t,\ell},t=1,\dots,n, \ell=1, \dots, k \}.
\]
Since $U_k$ is a compact polyhedron, the Archimedian property is satisfied
for the polynomials $\{ A \mapsto v_j - \Tr(V_j^\transpose  A)\}$ and $\{ A \mapsto \pm e_p^\transpose (A y_{t-1,\ell} - y_{t,\ell})\}$.
By \theoremref{thm:matrix psatz}, since $Q(A) - A Q(A) A^\transpose  \succ 0$ for every $A \in U_k$, there exists a positive scalar $\varepsilon$ and SOS matrices $M_j(A)$ and $M_{t\ell p}^\pm(A)$ that satisfy 
\begin{align*}
Q(A) - A Q(A) A^\transpose  =& \varepsilon I + M_0(A) + \sum_{j=1}^s M_j(A) (v_j - \Tr(V_j^\transpose  A)) \\
&+ \sum_{t=1}^{n} \sum_{\ell=1}^{k} \sum_{p=1}^{n} M_{t \ell p}^+(A) e_p^\transpose (A y_{t-1,\ell} - y_{t,\ell}) \\
&- \sum_{t=1}^{n} \sum_{\ell=1}^{k} \sum_{p=1}^{n} M_{t \ell p}^-(A) e_p^\transpose (A y_{t-1,\ell}-y_{t,\ell}) \quad \forall A \in \Rnn.
\end{align*}
By letting $M_{t\ell p}(A) = M_{t\ell p}^+(A) - M_{t\ell p}^-(A)$, we can satisfy \eqref{subeq:stabil}.

Now observe that for each $i \in \{1,\dots,r\}$, the function $A \rightarrow h_i^\transpose Q(A)h_i$ is continuous. Therefore, since $U_k$ is compact, there exists a positive scalar $\alpha$ satisfying
\[
\max_{i\in\{1,\dots,r\},A \in U_k} h_i^\transpose Q(A)h_i < \alpha.
\]
Observe that the tuple $(\varepsilon/\alpha, Q(A)/\alpha, M_j(A)/\alpha,M_{t\ell p}/\alpha)$
still satisfies \eqref{subeq:stabil}, \eqref{subeq:strict}, \versionii{and the SOS constraints}.
Furthermore, for each $i \in \{1,\dots,r\}$,
$ 1 - \alpha^{-1} h_i^\transpose  Q(A) h_i > 0$ for all $A \in U_k$.
Therefore, by \theoremref{thm:putinar}, and by a similar argument as in the case of constraint \eqref{subeq:stabil}, there exist SOS polynomials $\sigma_{ij}(A)$ and polynomials $\sigma_{it\ell p}(A)$
satisfying \eqref{subeq:polar dual}.

% We now divide $\varepsilon$, $Q(A)$ and each $M_j(A)$ by $\alpha$; after this, \eqref{subeq:sos matrices}, \eqref{subeq:stabil}, and \eqref{subeq:strict} are still satisfied but now we have for each $i$, that $A \rightarrow 1 - h_i^\transpose  Q(A) h_i$ is positive on $U_0$; therefore by \theoremref{thm:putinar}, (for high enough $d$) there exist some $\sigma_{ij}(A)$ satisfying \eqref{subeq:sos} and \eqref{subeq:polar dual}.
Since $Q(A)/\alpha$ is a continuous function of $A$ and since $Q(A)/\alpha$ is positive definite for each $A$ in the compact set $U_k$, there exists a scalar $\beta > 0$ such that $Q(A)/\alpha \succ \beta I$ for \newnewstuff{all} $A \in U_k$.
Then, we have \[\begin{bmatrix}
 Q(A)/\alpha - \beta I &  0 \\
 0 & \frac{1}{2} 
\end{bmatrix} \succ 0 \quad \forall A \in U_k.\]
It follows from \theoremref{thm:matrix psatz}, and by a similar argument as in the case of constraint \eqref{subeq:stabil}, that there exist some SOS matrices $\hat{M}_j(A)$ and symmetric polynomial matrices $\hat{M}_{t\ell p}(A)$ satisfying 
\begin{align*}
\begin{bmatrix}
 Q(A)/\alpha - \beta I &  0 \\
 0 & \frac{1}{2}
\end{bmatrix} =& \hat{M}_0(A) + \sum_{j=1}^s \hat{M}_j(A) (v_j - \Tr(V_j^\transpose  A)) \\
&+ \sum_{t=1}^{n} \sum_{\ell=1}^{k} \sum_{p=1}^{n} \hat{M}_{t \ell p}(A) e_p^\transpose (A y_{t-1,\ell} - y_{t,\ell}) \quad \forall A \in \Rnn.
\end{align*}
\newnewstuff{Observe that for} any $x\in\Rn$ satisfying $\|x\| \leq \sqrt{\frac{1}{2\beta}}$, we have $\begin{bmatrix}
 \beta I &  x \\
 x^\transpose  & \frac{1}{2}
\end{bmatrix} \succeq 0$.
\newnewstuff{Therefore}, $A \mapsto \hat{M}_0(A) + \begin{bmatrix}
 \beta I &  x \\
 x^\transpose  & \frac{1}{2}
\end{bmatrix}$ is still an SOS matrix of the same degree as $\hat{M}_0(A)$.
Hence, for any $x$ satisfying $\|x\| \leq \sqrt{\frac{1}{2\beta}}$,
we have that the tuple 
% (x, Q, M_j, \hat{M}_j, \sigma_{ij}, \varepsilon)
%(x,Q,M_j,M_{t\ell p},\hat{M}_j,\hat{M}_{t\ell p},\sigma_{ij},\sigma_{it\ell p},\varepsilon)
\begin{equation}\nonumber
    \left( x, Q/\alpha, M_j/\alpha, M_{t\ell p}/\alpha, \hat{M}_0 + \begin{bmatrix} \beta I & x \\ x^\transpose  & \frac{1}{2} \end{bmatrix}, \hat{M}_1, \dots, \hat{M}_s,\hat{M}_{t\ell p}, \sigma_{ij},\sigma_{it\ell p}, \varepsilon / \alpha \right)
\end{equation}
is feasible to \eqref{eq:inf_linear_sdp}
for some degree $d$ large enough.

% Since we could choose any $x$ satisifying $||x|| \leq \sqrt{\frac{1}{2\beta}}$ and since all the constraints are satisfied, it is clear that the feasible region is full-dimensional.

\end{proof}

\subsection{Number of Trajectories Needed to Learn}
\label{sec:inf-step num traj}
In Corollary~\ref{cor:n steps}, we established that we need no more than $n$ one-step trajectories to safely learn the matrix $\trueA \in \Rnn$ governing the true linear dynamical system of interest.
Then in \theoremref{thm:two-step generic}, we proved that generically, it is possible to safely learn $\trueA$ using only $\lceil \frac{n}{2} \rceil$ trajectories \newnewstuff{of length two}.
We now show that generically, we can \newnewstuff{safely} learn $\trueA$ from a single trajectory of length $n$.
\begin{theorem}\label{thm: inf num traj}
\newnewstuff{
%Suppose that for some even integer $d$, the set $\tilde{S}_{0,d}^{\infty}$ (defined in \theoremref{thm:linear}) is full-dimensional.
There exists a set $\mathcal{A} \subset \Rnn$ of Lebesgue measure zero such that if $\trueA \notin \mathcal{A}$, then by observing a single trajectory of length $n$ initialized} at random\footnote{Any distribution that is absolutely continuous with respect to the Lebesgue measure would work; for example, the uniform distribution.} from any full-dimensional infinite-step safe set (for example, the set $\tilde{S}_{0,d}^{\infty}$ defined in \theoremref{thm:linear} for large enough $d$), we will almost surely safely learn $\trueA$.
\end{theorem}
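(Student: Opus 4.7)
The plan is to follow the same template as the proof of \theoremref{thm:two-step generic}, but with the Krylov matrix in place of the two-step matrix $[X, Y^{(1)}]$. A single length-$n$ trajectory initialized at $x_0$ produces observations $x_0, \trueA x_0, \trueA^2 x_0, \dots, \trueA^n x_0$. From these observations one can uniquely recover $\trueA$ provided the Krylov matrix
\[
K(A, x) \defn [x, A x, A^2 x, \dots, A^{n-1} x] \in \Rnn
\]
is invertible at $(\trueA, x_0)$, since then $\trueA = [\trueA x_0, \dots, \trueA^n x_0]\, K(\trueA, x_0)^{-1}$.

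The first step is to consider the polynomial $p(A, x) \defn \det K(A, x)$ on $\Rnn \times \Rn$ and show that it does not vanish identically. This is easy to verify by exhibiting a single pair at which $p \neq 0$: for example, taking $A$ to be the diagonal matrix with distinct entries $1, 2, \dots, n$ and $x = (1, 1, \dots, 1)^T$ makes $K(A,x)$ a Vandermonde matrix with nonzero determinant. Consequently the zero set
\[
\mathcal{V} \defn \{(A, x) \in \R^{n \times (n+1)} \mid p(A, x) = 0\}
\]
is a proper algebraic subvariety of $\R^{n \times (n+1)}$ and hence has Lebesgue measure zero.

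By Fubini's theorem applied to the product structure on $\Rnn \times \Rn$, there exists a Lebesgue-null set $\mathcal{A} \subset \Rnn$ such that for every $\trueA \notin \mathcal{A}$ the slice
\[
\mathcal{V}_{\trueA} \defn \{ x \in \Rn \mid p(\trueA, x) = 0 \}
\]
has Lebesgue measure zero in $\Rn$. Now fix any such $\trueA \notin \mathcal{A}$ and any full-dimensional infinite-step safe set $T \subseteq S_0^{\infty}$ (e.g., $\tilde{S}_{0,d}^\infty$ from \theoremref{thm:linear}, which is full-dimensional for $d$ large enough under our stability assumptions). Sample $x_0$ from a distribution on $T$ that is absolutely continuous with respect to\ Lebesgue measure. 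Safety is immediate from $x_0 \in S_0^\infty$: every iterate $\trueA^t x_0$ remains in $S$ for all $t \geq 0$. For learning, absolute continuity of the sampling distribution and $\lambda^n(\mathcal{V}_{\trueA}) = 0$ give $\Pr(x_0 \in \mathcal{V}_{\trueA}) = 0$, so almost surely $K(\trueA, x_0)$ is invertible and $\trueA$ is exactly recovered from the observed trajectory.

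The main obstacle is simply confirming that the Krylov determinant polynomial is not identically zero on $\Rnn \times \Rn$; beyond that the argument is a standard Fubini-on-a-null-set argument identical in spirit to the one used in \theoremref{thm:two-step generic}, so no new machinery is required.
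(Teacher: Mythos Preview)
Your proposal is correct and essentially identical to the paper's proof: both define the Krylov determinant $\det[x,Ax,\dots,A^{n-1}x]$, show it is a nonzero polynomial on $\Rnn\times\Rn$, and then apply a Fubini/slice argument to obtain the measure-zero exceptional set $\mathcal{A}$. The only cosmetic difference is the witness pair used to show the polynomial is nonvanishing (you use a diagonal matrix with distinct entries and the all-ones vector to get a Vandermonde matrix, whereas the paper uses the subdiagonal shift matrix and $e_1$ to get the identity).
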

\begin{proof}
Consider the set 
\[
\mathcal{V} \defn \{ [A,x] \in \R^{n \times (n+1)} \mid \text{det}([x,Ax,A^2x,\dots,A^{n-1}x]) = 0 \}.
\]
This is the zero-set of a polynomial, therefore it is either the entire space or has Lebesgue measure zero.
It is not the entire space since we can take $A$ to be the matrix with ones on its first subdiagonal and zeros elsewhere and $x$ to be the vector with one as its first entry and zeros elsewhere.
With $A$ and $x$ defined this way, we have
\[
\text{det}([x,Ax,A^2x,\dots,A^{n-1}x]) = \text{det}(I) = 1 \neq 0.
\]
Therefore, $\mathcal{V}$ must have Lebesgue measure zero.
Since the Lebesgue measure on $\R^{n \times (n+1)}$ is the completion of the product measure of the Lebesgue measures of $\Rnn$ and $\Rn$, we have that for almost every $A$, the set
\[
\mathcal{V}_A \defn \{ x \in \Rn \mid \text{det}([x,Ax,A^2x,\dots,A^{n-1}x]) = 0 \}
\]
has Lebesgue measure zero.
Thus, there must exist a set $\mathcal{A} \subset \Rnn$ of Lebesgue measure zero such that if $A \notin \mathcal{A}$, then $\mathcal{V}_{A}$ has Lebesgue measure zero.
Now assume that $\trueA \notin \mathcal{A}$
and let $x$ be the initialization of our observed trajectory.
Because $x$ is sampled at random\footnotemark[6] from a full-dimensional infinite-step safe set, 
we have $\Pr( x \not\in \mathcal{V}_{\trueA}) = 1$.
When $x \not\in \mathcal{V}_{\trueA}$, we have  $\text{det}([x,\trueA x,\trueA^2x,\dots,\trueA^{n-1}x])$ is nonzero, and therefore $[x,\trueA x,\trueA^2x,\dots,\trueA^{n-1}x]$ is invertible.
Since we observe $[\trueA x,\trueA^2x,\dots,\trueA^{n}x]$, we can now recover $\trueA$ by solving a linear system
\[
\trueA[x,\trueA x,\trueA^2x,\dots,\trueA^{n-1}x] = [\trueA x,\trueA^2x,\dots,\trueA^{n}x]
\] \[
\Rightarrow \trueA = [\trueA x,\trueA^2x,\dots,\trueA^{n}x] \left([x,\trueA x,\trueA^2x,\dots,\trueA^{n-1}x]\right)^{-1}.
\]
\end{proof}

\subsection{Numerical Examples}\label{sec:infinite example}
In this section, we present two numerical examples of infinite-step safe learning.
\subsubsection{Inner and Outer Approximations of the Infinite-Step Safe Set}
\label{sec:infinite_example_one}
In our first example, we take $n=2$,
\[
U_0 =  \left \{ A \in \R^{2 \times 2} \middle\vert  A_{1,1} = A_{2,2} = \frac{1}{2}, A_{1,2},A_{2,1} \geq 0, A_{1,2} + A_{2,1} = \frac{9}{5} \right \} ,
\]
and
\[
S = \left \{ x \in \R^2 \middle\vert \begin{bmatrix}
  1 & 1 \\
 -1 & 0 \\
 0 & -1
\end{bmatrix} x \leq \begin{bmatrix}
 1 \\
 1  \\
 1
\end{bmatrix} \right \}.
\]
One can check that every matrix in $U_0$ is stable, though there are products of matrices in $U_0$ that have spectral radius greater than one and hence the techniques of \cite{ahmadi18rdo} do not apply.
We solve the semidefinite program \eqref{eq:inf_linear_sdp} with degree $d=4$
(\newnewstuff{in this example, the semidefinite program with $d=2$ is infeasible}).
In \figureref{fig:infinite1}, we plot $S$, $\tilde{S}_{0,4}^{\infty}$, and $\tilde{S}^{\infty}(A)$ for various matrices $A$ in $U_0$.
\newnewstuff{
We also plot $\bar{S}_0^\infty$ which is the intersection of $S^{10}(A)$ for various matrices $A$ in $U_0$; in particular, $\bar{S}_0^\infty$ is an outer approximation of $S^\infty_0$. This outer approximation is not too much larger than $\tilde{S}_{0,4}^{\infty}$, our inner approximation of $S^\infty_0$.

In this example, we also observe that
\[
\tilde{S}_{0,4}^{\infty} = \bigcap_{A\in U_0} \tilde{S}^{\infty}(A).
\]
From the proof of \theoremref{thm:linear}, we have that $\tilde{S}_{0,d}^{\infty} \subseteq \bigcap_{A\in U_0} \tilde{S}^{\infty}(A)$ for all $d$.
Therefore, this example shows not only that $d=4$ is high enough to get a full-dimensional inner approximation of $S^\infty_0$, but also that $d=4$ is sufficient to get the largest possible infinite-step safe set based on our approach.}
\subsubsection{Comparing One, Two, and Infinite-step Safety}
\label{sec:infinite_example_two}
In our second example, we take $n=2$,
\[
U_0 = \left \{ A \in \Rnn \middle\vert \: \left\|A - \begin{bmatrix}
 1 & .5 \\
 -.5 & .5  
\end{bmatrix} \right\|_F \leq 0.1 \ \right \},
\]
and the same safety region $S$ as in the previous example.
\newnewstuff{We take $\trueA = \begin{bmatrix}
 1.05 & .5 \\
 -.5 & .5  
\end{bmatrix} \in U_0$ and the initialization cost function to be given by the affine function $c(x) = (-1,0)^\transpose  x + 3$, which is nonnegative over $S$.

Since the initial uncertainty set $U_0$ is not polyhedral, we replace the linear programs in \algorithmref{alg:one-step} for one-step safe learning with semidefinite programs. This is done by taking \eqref{two-step}, discarding the two-step constraint, and then converting the resulting problem to a semidefinite program by the same method as in \theoremref{thm:two-step}. Our algorithm then learns $\trueA$ with two one-step safe trajectories with a total initialization cost of $3.1489$.

For two-step safe learning, we use \algorithmref{alg:two-step}.
We learn $\trueA$ with one two-step safe trajectory with an initialization cost of $1.9252$.

For infinite-step safe learning, %
}%
we adapt the method of \theoremref{thm:linear} to the non-polyhedral set $U_0$ by multiplying the SOS matrices and polynomials in semidefinite program \eqref{eq:inf_linear_sdp} by the polynomial
\[\left \{A \mapsto 0.1^2 - \left\|A - \begin{bmatrix}
 1 & .5 \\
 -.5 & .5  
\end{bmatrix} \right\|_F^2 \right \}\]
instead of $\{A \mapsto v_j - \Tr(V_j^\transpose  A)\}_{j=1}^s$.
\newnewstuff{We learn $\trueA$ with one infinite-step safe trajectory with an initialization cost of $2.0080$.

In this example, we see that two-step learning incurs the lowest total initialization cost.
This is because a single two-step safe trajectory is sufficient for learning $\trueA$.
Therefore, the initialization cost is incurred once as opposed to twice for one-step safe learning.
Additionally, requiring two-step safety is less restrictive than requiring infinite-step safety resulting in lower cost compared to infinite-step safe learning.
}

In \figureref{fig:infinite2}, we plot $S$, $S_0^1$, $S_1^1$, $S_0^2$, $\tilde{S}_{0,2}^{\infty}$ \newnewstuff{and the initialization points chosen by each algorithm.} We observe the inclusion relationships $\tilde{S}_{0,2}^{\infty} \subseteq S_0^2\subseteq S_0^1\subseteq S$ \newnewstuff{as expected.
Note that $S_0^1$ and $S_0^2$ are exact characterizations of the one-step and two-step safety sets, respectively, while $\tilde{S}_{0,2}^{\infty}$ is an inner approximation of $S_0^{\infty}$, the true infinite-step safety set.
Since $\tilde{S}_{0,2}^{\infty}$ is not much smaller than $S_0^2$, which is a superset of $S_0^{\infty}$, we see that our semidefinite representable set $\tilde{S}_{0,d}^{\infty}$ with $d=2$ closely approximates the true infinite-step safety set $S_0^{\infty}$.}

\versionii{To show that the above trend is not specific to the example we chose, we repeat the procedure for 100 randomly generated instances of this problem.
We use the same sets $S$ and $U_0$, we sample the matrix $\trueA$ uniformly at random from $U_0$, and we sample the cost vector $c$ uniformly at random from the unit sphere.
In all 100 examples, we learn the true system after two one-step trajectories as guaranteed by Corollary~\ref{cor:n steps}, or with a single trajectory of length two (or longer) as suggested by \Cref{thm:two-step generic} (or \Cref{thm: inf num traj}).
The cost of one-step learning was on average $3.5324$ with a standard deviation of $0.5907$.
The cost of two-step learning was on average $1.9484$ with a standard deviation of $0.1963$.
The cost of infinite-step learning was on average $2.0246$ with a standard deviation of $0.1746$.
The box plot in \figureref{fig:boxplots2} summarizes the distribution of initialization costs for each version.
Here, the initialization cost of infinite-step safe learning is slightly higher than that of two-step safe learning. Observe that since $S^\infty_0 \subseteq S^2_0$, and since a two-dimensional system can be learned with a single two-step trajectory, the cost of two-step safe learning here will never be worse than that of infinite-step safe learning. While $S^2_0 \subseteq S^1_0$, the cost of one-step safe learning is higher in these examples since initialization cost is paid twice.
}

\begin{figure}[h!t]
\figureconts
{fig:infinite}% label for whole figure
{\caption{Infinite-step safe learning associated with the numerical examples in \sectionref{sec:infinite example}.}}% caption for whole figure
{%
\subfigure[Inner approximation of the infinite-step safety set of a linear system associated with the example in \sectionref{sec:infinite_example_one}.][t]{%
\label{fig:infinite1}% label for this sub-figure
\includegraphics[width=.5\textwidth -.5em]{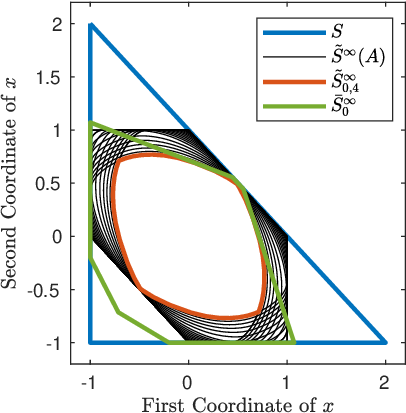}
} % space out the images a bit
\subfigure[One-step and two-step safety sets and an inner approximation of the infinite-step safety set associated with the example in \sectionref{sec:infinite_example_two}.][t]{%
\label{fig:infinite2}% label for this sub-figure
\includegraphics[width=.5\textwidth -.5em]{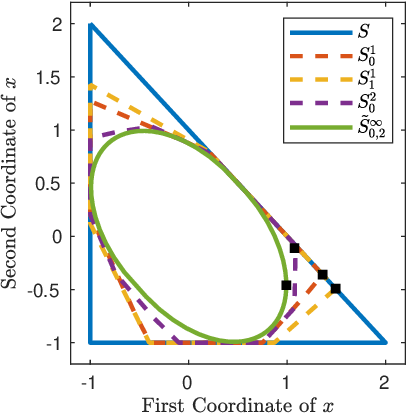}
}
}
\end{figure}

\begin{figure}[h!t]
\centering
\includegraphics[width=.5\textwidth]{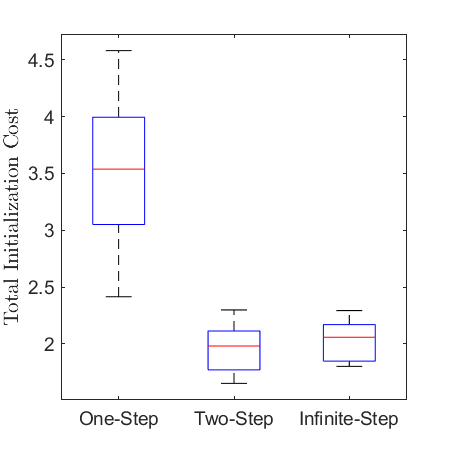}
\caption{\versionii{Total initialization cost of one, two, and infinite-step safe learning for the distribution of two-dimensional problems described at the end of \sectionref{sec:infinite_example_two}.}
}
\label{fig:boxplots2}
\end{figure}}
\section{One-Step Safe Learning of Nonlinear Systems}
\label{sec:nonlinear}

\newnewstuff{In this section, we turn our attention to} the problem of safely learning a dynamical system
of the form $x_{t+1} = \truef(x_t)$, where
\begin{align}
  \truef(x) = \trueA x + \trueg(x), \label{eq:nonlinear_model}
\end{align}
for some matrix $\trueA \in \R^{n \times n}$
and some possibly nonlinear map $\trueg :  \R^n \rightarrow \R^n$.
We take our safety region $S \subset \R^n$ to be the same as \eqref{eq:S polyhedron}.
Our initial knowledge about $\trueA, \trueg$ is membership in the sets
\begin{align*}
    U_{0,A} &\defn \left \{ A \in \R^{n \times n} \mid  \Tr(V_j^\transpose  A) \leq v_j \quad j = 1, \dots ,s \right \}, \\
    U_{0,g} &\defn \{ g : \R^n \rightarrow \R^n \mid \norm{g(x)}_{\infty} \leq \gamma \norm{x}_p^d \quad \forall x \in S \}.
\end{align*}
Here, $p \geq 1$ is either $+\infty$ or a rational number,
$\gamma$ is a given positive constant, and $d$ is a given nonnegative integer.
The use of the $\norm{\cdot}_\infty$ on $g$ in the definition of $U_{0, g}$
simplifies some of the following analysis,
though an extension to other semidefinite representable norms is possible.
\versionii{Here the parameter $d$ represents the growth rate of the nonlinearity; a larger value of $d$ corresponds to a nonlinearity that can grow faster away from the origin.}
Note that by taking $d=0$, e.g.,\ our model of uncertainty captures any
map $f$ which is bounded on $S$.
Again for simplicity, we assume a linear \newnewstuff{initialization} cost $c^\transpose  x$ for some vector $c \in \R^n$.

Our goal in this section is to demonstrate how to safely collect one-step safe trajectories for \eqref{eq:nonlinear_model} at minimum cost.
By doing so, we reduce our uncertainty on $\trueA$ and are able to fit a parametric model to $g$ that respects the constraints in $U_{0,g}$.
Having collected $k$ safe measurements $\{(x_j, y_j)\}_{j=1}^{k}$
with $y_j = \truef(x_j)$,
\newnewstuff{we can reduce our uncertainty around $\trueA$ as follows:
\begin{align} \nonumber
    U_{k,A} = \{ A \in U_0 \mid \norm{A x_j - y_j}_{\infty} \leq \gamma \norm{x_j}_p^d \quad j=1,\dots,k\}.
\end{align}}
The optimization problem to find the next cheapest one-step safe \newnewstuff{initialization point} \versionii{(i.e., the version of \eqref{eq:greedy_safe} for this specific case)} is then:
\begin{equation}\label{eq:nonlinear}
\begin{aligned}
\min_{x} \quad & c^\transpose  x &\\
\textrm{s.t.} \quad & x \in S &\\
& \newnewstuff{f(x) \in S \quad \forall~f \in \{ x \mapsto Ax+g(x) \mid A \in U_{k, A}, g \in U_{0, g}\}.}
\end{aligned}
\end{equation}
We show next that an exact reformulation of this problem can be solved in an efficient manner.
\subsection{Reformulation as a Second-Order Cone Program}
Our main result of this section is to derive a tractable reformulation of problem~\eqref{eq:nonlinear}.
\begin{theorem}\label{thm:nonlinear socp}
Problem \eqref{eq:nonlinear} can be reformulated as a second-order cone program.
\end{theorem}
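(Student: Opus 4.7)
The plan is to adapt the linear-programming duality approach of \theoremref{thm:one-step LP} to handle the two new features of the nonlinear setting: uncertainty in $g$ and the $\ell_\infty$-type constraints defining $U_{k,A}$. First I would decompose the robust constraint into $r$ facet-wise maxima, using that the choices of $A \in U_{k,A}$ and $g \in U_{0,g}$ decouple: the constraint becomes $\max_{A \in U_{k,A}} h_i^T A x + \max_{g \in U_{0,g}} h_i^T g(x) \leq b_i$ for $i = 1, \ldots, r$.

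Each inner maximum then reduces to something tractable. The maximum over $g$ admits a closed form: since $\|g(x)\|_\infty \leq \gamma \|x\|_p^d$ is the only constraint on $g(x)$ at a given $x \in S$, we have $\max_{g \in U_{0,g}} h_i^T g(x) = \gamma \|x\|_p^d \|h_i\|_1$ (attained e.g.\ by the admissible map $y \mapsto \gamma \|y\|_p^d \,\mathrm{sign}(h_i)$). The maximum over $A$ is a linear program: the objective $h_i^T A x$ is linear in $A$, and $U_{k,A}$ is polyhedral, since each constraint $\|A x_j - y_j\|_\infty \leq \gamma \|x_j\|_p^d$ (with $x_j, y_j$ observed constants) translates to $2n$ linear inequalities in $A$. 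By strong LP duality, this maximum is replaced by a minimization over nonnegative dual multipliers $\mu_j^{(i)}$ (for the polyhedral part of $U_{0,A}$) and $\eta^{\pm}_{\ell j, i}$ (for the observation inequalities), with linear equality constraints of the form
\begin{equation*}
x h_i^T = \sum_{j=1}^s \mu_j^{(i)} V_j^T + \sum_{j=1}^k \sum_{\ell=1}^n x_j \bigl(\eta^+_{\ell j, i} - \eta^-_{\ell j, i}\bigr) e_\ell^T,
\end{equation*}
mirroring the derivation leading to \eqref{eq:one-step LP}, and a linear dual objective.

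Introducing slack variables $t_i$ for $\gamma \|h_i\|_1 \|x\|_p^d$, problem \eqref{eq:nonlinear} becomes a minimization in $(x, t, \mu, \eta^+, \eta^-)$ subject to (i) the linear inequalities $h_i^T x \leq b_i$, (ii) the linear dual equalities above together with a linear aggregate inequality that bundles the LP dual objective and $t_i$ and bounds their sum by $b_i$, and (iii) the epigraphical constraints $\gamma \|h_i\|_1 \|x\|_p^d \leq t_i$. The last step invokes the standard fact (see \cite{bental_nemirovski}) that for rational $p \geq 1$ and nonnegative integer $d$, the epigraph $\{(x, s) : \|x\|_p^d \leq s\}$ admits a second-order cone representation of polynomial size via a tower of auxiliary SOC constraints. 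Applying this to each constraint in (iii) yields an SOCP equivalent to \eqref{eq:nonlinear}. The only subtle point to verify is strong LP duality for the inner maximization over $A$: feasibility is automatic since $\trueA \in U_{k,A}$ by construction, and boundedness of the optimum follows from the natural operating assumption that $U_{0,A}$ is bounded, so there is no duality gap.
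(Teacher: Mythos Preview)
Your proposal is correct and follows the same overall route as the paper: split each facet constraint into an $A$-part and a $g$-part, handle the $A$-part by LP duality over the polyhedron $U_{k,A}$ (yielding the same dual variables and linear equality constraints as the paper's \eqref{eq:nonlinear socp}), resolve the $g$-part to the closed form $\gamma\|h_i\|_1\|x\|_p^d$, and invoke second-order-cone representability of $\|x\|_p^d$.

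Your decoupling step is actually cleaner than the paper's. Because you read \eqref{eq:nonlinear} literally as ranging over the product set $U_{k,A}\times U_{0,g}$, the identity $\max_{A,g}h_i^T(Ax+g(x))=\max_A h_i^TAx+\max_g h_i^Tg(x)$ is immediate. The paper instead passes to the joint-consistency formulation with the constraints $A x_j + g(x_j) = y_j$ in \eqref{eq:nonlinear bilevel}, which couples $A$ and $g$; it must then prove the split via a surgery construction (the map $\hat{g}_2$), valid only for $x\notin\{x_1,\dots,x_k\}$, and add those finitely many points back to the feasible set at the end. The two resulting SOCPs agree off that finite set. One minor remark: your appeal to boundedness of $U_{0,A}$ for strong duality is unnecessary---if the inner LP over $A$ is unbounded for some $x$, its dual is infeasible, so no multipliers $(\mu,\eta)$ exist and the SOCP correctly excludes that $x$ as well.
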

\myproof{
We start by rewriting problem~\eqref{eq:nonlinear} using the definition of $S$:
\versionii{
\begin{equation}\label{eq:nonlinear bilevel}
\begin{aligned}
\min_{x} \quad & c^\transpose  x\\
\textrm{s.t.} \quad & h_i^\transpose  x \leq b_i \quad i = 1, \dots ,r \\
& \begin{bmatrix} \max_{A,g} \quad & h_i^\transpose  (A x + g(x))  \\ 
\textrm{s.t.} \quad & \Tr(V_j^\transpose  A) \leq v_j \quad j = 1, \dots ,s \\
& \norm{g(x)}_{\infty} \leq \gamma \norm{x}_p^d \quad \forall x \in S \\
& Ax_\ell + g(x_\ell) = y_\ell \quad \ell = 1,\dots,k 
\end{bmatrix} \leq b_i \quad i = 1, \dots ,r.
\end{aligned}
\end{equation}
}
Note that in the inner maximization problem in \eqref{eq:nonlinear bilevel},
the variable $x$ is fixed.
We claim that if $x \not\in \{x_1, \dots, x_k\}$, then
\versionii{
\begin{align}
&\begin{bmatrix} \max_{A,g} \quad & h_i^\transpose  (A x + g(x))  \\ 
\textrm{s.t.} \quad & \Tr(V_j^\transpose  A) \leq v_j \quad j = 1, \dots ,s \\
& \norm{g(x)}_{\infty} \leq \gamma \norm{x}_p^d \quad \forall x \in S \\
& Ax_\ell + g(x_\ell) = y_\ell \quad \ell = 1,\dots,k 
\end{bmatrix} \label{eq:three_brackets} \\ 
&\qquad = \begin{bmatrix} \max_{A,g} & h_i^\transpose  A x  \\ 
\textrm{s.t.} & \Tr(V_j^\transpose  A) \leq v_j \quad \forall j \\
& Ax_\ell + g(x_\ell) = y_\ell \quad \forall \ell \\
& \norm{g(x)}_{\infty} \leq \gamma \norm{x}_p^d \quad \forall x \in S
\end{bmatrix} + \begin{bmatrix}
\max_{A,g} & h_i^\transpose  g(x)  \\ 
\textrm{s.t.} & \Tr(V_j^\transpose  A) \leq v_j \quad \forall j \\
& Ax_\ell + g(x_\ell) = y_\ell \quad \forall \ell \\
& \norm{g(x)}_{\infty} \leq \gamma \norm{x}_p^d \quad \forall x \in S
\end{bmatrix}. \nonumber 
\end{align}
}
It is clear that the left-hand side is upper bounded by the right-hand side.
To show the reverse inequality, let
$(A_1, g_1)$ (resp.\ $(A_2, g_2)$) be feasible to the first (resp. second) problem on the right-hand side (if any of these of these problems is infeasible, then the inequality we are after is immediate).
Now let 
\versionii{
\begin{align*}
    \hat{g}_2(x) = \begin{cases}
        g_2(x) &\text{if } x \not\in \{x_1, \dots, x_k\} \\
        y_\ell - A_1 x_\ell &\text{if } x = x_\ell.
    \end{cases}
\end{align*}
}
It is straightforward to check that the pair $(A_1, \hat{g}_2)$ is feasible to the left-hand side of \eqref{eq:three_brackets},
therefore proving \eqref{eq:three_brackets}.

We now focus on reformulating each term on the right-hand side of \eqref{eq:three_brackets}, again under the assumption that \versionii{$x \not\in \{ x_1,\dots,x_k\}$.}
% Now we split the maximization of the inner problem since $A$ and $g$ are independent of each other.
% The only constraint that involves both of them is $Ax_k + g(x_k) = y_k$, but this only gives information for some specific values of $x$, therefore, without further assumptions on $g$, this constraint does not affect the maximization over $g$.
% Therefore, we can rewrite the inner maximization for all $i$ as the following:
% \begin{equation}
% \begin{bmatrix} \max_{A,g} & h_i^\transpose  A x  \\ 
% \textrm{s.t.} & \Tr(V_j^\transpose  A) \leq v_j \quad \forall j \\
% & Ax_k + g(x_k) = y_k \quad \forall k \\
% & \norm{g(x)}_{\infty} \leq \gamma \norm{x}_p^d \quad \forall x \in S
% \end{bmatrix} + \begin{bmatrix}
% \max_{A,g} & h_i^\transpose  g(x)  \\ 
% \textrm{s.t.} & \Tr(V_j^\transpose  A) \leq v_j \quad \forall j \\
% & Ax_k + g(x_k) = y_k \quad \forall k \\
% & \norm{g(x)}_{\infty} \leq \gamma \norm{x}_p^d \quad \forall x \in S
% \end{bmatrix} \leq b_i
% \end{equation}
Using the constraint on $g$, the first term can be rewritten as follows:
\versionii{
\begin{equation}\label{eq:max A nonlinear}
\begin{aligned}
\max_{A} \quad & h_i^\transpose  A x  \\ 
\textrm{s.t.} \quad & \Tr(V_j^\transpose  A) \leq v_j \quad j = 1, \dots ,s \\
& \norm{Ax_\ell - y_\ell}_{\infty} \leq \gamma \norm{x_\ell}_p^d \quad \ell = 1,\dots,k .
\end{aligned}
\end{equation}
Note that \eqref{eq:max A nonlinear} is a linear program as it is equivalent to:
\begin{equation}\label{eq:nonlinear primal}
\begin{aligned}
\max_{A} \quad & h_i^\transpose  A x  \\ 
\textrm{s.t.} \quad & \Tr(V_j^\transpose  A) \leq v_j \quad j = 1, \dots ,s \\
& (Ax_\ell - y_\ell)_{\ell'} \leq \gamma  \norm{x_\ell}_p^d \quad \ell = 1,\dots,k \quad \ell' = 1,\dots,n\\
& -(Ax_\ell - y_\ell)_{\ell'} \leq \gamma  \norm{x_\ell}_p^d \quad \ell = 1,\dots,k \quad \ell' = 1,\dots,n.
\end{aligned}
\end{equation}
Here, the notation $(Ax_\ell - y_\ell)_{\ell'}$ represents the $\ell'$-th coordinate of the vector $(Ax_\ell - y_\ell)$.
Following the same approach as in \sectionref{sec:one-step}, we proceed by taking the dual of this linear program.
For $j=1,\dots,s$, $\ell=1,\dots,k$, and $\ell'=1,\dots,n$, let $\mu_j,\eta^+_{\ell\ell'},\eta^-_{\ell\ell'} \in \R$ be dual variables.
The dual of problem \eqref{eq:nonlinear primal} reads:
\begin{equation}
\begin{aligned}
\min_{\mu,\eta^+,\eta^-} \quad & \sum_{j=1}^s \mu_j v_j + \sum_{\ell=1}^k \sum_{\ell'=1}^n \eta^+_{\ell \ell'} (\gamma\norm{x_\ell}_p^d + (y_\ell)_{\ell'} )  + \sum_{\ell=1}^k \sum_{\ell'=1}^n \eta^-_{\ell \ell'} (\gamma\norm{x_\ell}_p^d - (y_\ell)_{\ell'} )\\ 
\textrm{s.t.} \quad & x h_i^\transpose  = \sum_{j=1}^s \mu_j V_j^\transpose  + \sum_{\ell=1}^k \sum_{\ell'=1}^n \eta^+_{\ell \ell'} x_\ell e_{\ell'}^\transpose  - \sum_{\ell=1}^k \sum_{\ell'=1}^n \eta^-_{\ell \ell'} x_\ell e_{\ell'}^\transpose  \\
& \mu \geq 0, \quad \eta^+ \geq 0, \quad \eta^- \geq 0,
\end{aligned}
\end{equation}
where $e_{\ell'}$ is the $l$-th coordinate vector.}
Now we turn our attention to the second term on the right-hand side of \eqref{eq:three_brackets}.
After eliminating the irrelevant constraints, the problem can be rewritten as:
\begin{equation}
\begin{aligned}
\max_{g} \quad &  h_i^\transpose  g(x)  \\ 
\textrm{s.t.} \quad & \norm{g(x)}_{\infty} \leq \gamma \norm{x}_p^d.
\end{aligned}
\end{equation}
Recall that the dual norm of $\norm{\cdot}_{\infty}$ is  $\norm{\cdot}_1$.
Therefore, the optimal value of this optimization problem is simply $\gamma \norm{h_i}_1 \cdot \norm{x}_p^d$.
%
%Let $(x_\star, \mu_\star, \eta^+_\star, \eta^-_\star)$ be an optimal solution 
%to the following optimization problem:

Now consider the optimization problem:
\versionii{
\begin{equation}\label{eq:nonlinear socp}
\begin{aligned}
\min_{x,\mu,\eta^+,\eta^-} \quad & c^\transpose  x\\
\textrm{s.t.} \quad & h_i^\transpose  x \leq b_i \quad i = 1, \dots ,r \\
& \sum_{j=1}^s \mu_j^{(i)} v_j + \sum_{\ell=1}^k \sum_{\ell'=1}^n \eta^{+(i)}_{\ell\ell'} (\gamma\norm{x_\ell}_p^d + (y_\ell)_{\ell'} )   \\
&\qquad + \sum_{\ell=1}^k \sum_{\ell'=1}^n \eta^{-(i)}_{\ell\ell'} (\gamma\norm{x_\ell}_p^d - (y_\ell)_{\ell'} )  + \gamma \norm{h_i}_1 \cdot \norm{x}_p^d \leq b_i \quad i = 1, \dots, r \\
& x h_i^\transpose  = \sum_{j=1}^s \mu_j^{(i)} V_j^\transpose  + \sum_{\ell=1}^k \sum_{\ell'=1}^n \eta^{+(i)}_{\ell\ell'} x_\ell e_{\ell'}^\transpose  - \sum_{\ell=1}^k \sum_{\ell'=1}^n \eta^{-(i)}_{\ell\ell'} x_\ell e_{\ell'}^\transpose  \quad i = 1, \dots, r \\
& \mu \geq 0, \quad \eta^+ \geq 0, \quad \eta^- \geq 0.
\end{aligned}
\end{equation}
}If $d=0$, or if $d=1$ and $p\in\{1,+\infty\}$, then \eqref{eq:nonlinear socp} is a linear program.
Otherwise, the rationality of $p$ ensures that $\norm{x}^d_p$ is \emph{second-order cone representable} \citetext{see \citealp[Sect.~2.3]{bental_nemirovski}; \citealp[Sect.~2.5]{LOBO1998193}}.
This means that \eqref{eq:nonlinear socp} is indeed a second-order cone program.

%Now we claim that the optimal value of \eqref{eq:nonlinear} is equal to:
%\begin{align*}
%    \min\{ c^\transpose  x_1, \dots,  c^\transpose  x_k, c^\transpose  x_\star \} \:,
%\end{align*}
%and furthermore the optimal solution 

Let $F \subset \R^n$ denote 
the projection of the feasible set of \eqref{eq:nonlinear socp} to $x$-space.
We claim that the feasible set of \eqref{eq:nonlinear} equals $F \cup \{ x_1, \dots, x_k\}$.
Indeed, since the vectors $x_k$ are one-step safe \newnewstuff{initialization points}, 
we have that $x_k \in S$ and $y_k \in S$.
This implies that $x_k$ is feasible to \eqref{eq:nonlinear}.
Furthermore, for $x \in F \setminus \{ x_1, \dots, x_k \}$,
we have shown that $x$ satisfies the constraints of \eqref{eq:nonlinear bilevel}
if and only if $x$ satisfies the constraints of \eqref{eq:nonlinear socp}.

Therefore, optimizing an objective function over the feasible set of \eqref{eq:nonlinear}
is equivalent to optimizing the same objective function over $F \cup \{x_1, \dots, x_k\}$.
%
%First, because the projection of the feasible set of \eqref{eq:nonlinear socp}
%is a subset of the feasible set of \eqref{eq:nonlinear}, and since
%each $x_i$, $i = 1, \dots, k$ is feasible for \eqref{eq:nonlinear},
%this shows that $F$ is contained in the feasible set of \eqref{eq:nonlinear}.
%On the other hand, 
}
\versionii{
\begin{remark}
We note that problem \eqref{eq:nonlinear socp}
can be modified so that
one-step safety is achieved in the presence of 
bounded disturbances.
That is, suppose that the dynamics were governed by
\[
x_{t+1} = \trueA x_t + \trueg(x_t) + w_t,
\]
where $w_t$ represents some potentially adversarial disturbance, $\trueA \in U_{0,A}$, and $\trueg \in U_{0,g}$.
We can still give an exact second-order cone programming-based characterization of the one-step safety set in the case when we have $\|w_t\| \leq W_t$, where $\norm{\cdot}$ is any norm whose unit ball is a polytope and $W_t$ is a given scalar.
For example, if $\norm{\cdot}$ is the infinity norm, the set of one-step safe initialization points after observing $k$ measurements from the disturbed dynamics is the projection to $x$-space of the feasible set of the following second-order cone program:
\begin{align*}
\min_{x,\mu,\eta^+,\eta^-} \quad & c^\transpose  x\\
\textrm{s.t.} \quad & h_i^\transpose  x \leq b_i \quad i = 1, \dots ,r \\
& \sum_{j=1}^s \mu_j^{(i)} v_j + \sum_{\ell=1}^k \sum_{\ell'=1}^n \eta^{+(i)}_{\ell\ell'} (\gamma\norm{x_\ell}_p^d + W_\ell + (y_\ell)_{\ell'} )   \\
&\qquad + \sum_{\ell=1}^k \sum_{\ell'=1}^n \eta^{-(i)}_{\ell\ell'} (\gamma\norm{x_\ell}_p^d + W_\ell - (y_\ell)_{\ell'} )  + \gamma \norm{h_i}_1 \cdot \norm{x}_p^d + W_{k+1} \|h_i\|_1 \leq b_i \quad i = 1, \dots, r \\
& x h_i^\transpose  = \sum_j \mu_j^{(i)} V_j^\transpose  + \sum_{\ell=1}^k \sum_{\ell'=1}^n \eta^{+(i)}_{\ell\ell'} x_\ell e_{\ell'}^\transpose  - \sum_{\ell=1}^k \sum_{\ell'=1}^n \eta^{-(i)}_{\ell\ell'} x_\ell e_{\ell'}^\transpose  \quad i = 1, \dots, r \\
& \mu \geq 0, \quad \eta^+ \geq 0, \quad \eta^- \geq 0
\end{align*}
where the input to the problem is the descriptions of $S$ and $U_0$ ($h_i,b_i$ and $V_j,v_j$) and the measurements $(x_\ell,y_\ell)$ and we have introduced dual variables $\mu_j^{(i)}$ for $i=1,\dots,r$, $j=1,\dots,s$ and $\eta^{+(i)}_{\ell\ell'},\eta^{-(i)}_{\ell\ell'}$ for $i=1,\dots,r$, $\ell=1,\dots,k$ and $\ell' = 1,\dots,n$.
\end{remark}
\begin{remark}
We note that problem \eqref{eq:nonlinear socp}
can be modified so that
one-step safety is achieved in the presence of 
bounded measurement noise.
That is, suppose that instead of directly observing $y_k = \trueA x_k + \trueg(x_k)$, we observe
\[
y_k = \trueA x_k + \trueg(x_k) + z_k,
\]
where $z_k$ represents the noise in the measurement, $\trueA \in U_{0,A}$, and $\trueg \in U_{0,g}$.
We can still give an exact second-order cone programming-based characterization of the one-step safety set in the case when we have $\|z_k\| \leq Z_k$, where $\norm{\cdot}$ is any norm whose unit ball is a polytope and $Z_k$ is a given scalar.
For example, if $\norm{\cdot}$ is the infinity norm, the set of one-step safe initialization points after observing $k$ noisy measurements is the projection to $x$-space of the feasible set of the following second-order cone program:
\begin{align*}
\min_{x,\mu,\eta^+,\eta^-} \quad & c^\transpose  x\\
\textrm{s.t.} \quad & h_i^\transpose  x \leq b_i \quad i = 1, \dots ,r \\
& \sum_{j=1}^s \mu_j^{(i)} v_j + \sum_{\ell=1}^k \sum_{\ell'=1}^n \eta^{+(i)}_{\ell\ell'} (\gamma\norm{x_\ell}_p^d + Z_\ell + (y_\ell)_{\ell'} )   \\
&\qquad + \sum_{\ell=1}^k \sum_{\ell'=1}^n \eta^{-(i)}_{\ell\ell'} (\gamma\norm{x_\ell}_p^d + Z_\ell - (y_\ell)_{\ell'} )  + \gamma \norm{h_i}_1 \cdot \norm{x}_p^d \leq b_i \quad i = 1, \dots, r \\
& x h_i^\transpose  = \sum_j \mu_j^{(i)} V_j^\transpose  + \sum_{\ell=1}^k \sum_{\ell'=1}^n \eta^{+(i)}_{\ell\ell'} x_\ell e_{\ell'}^\transpose  - \sum_{\ell=1}^k \sum_{\ell'=1}^n \eta^{-(i)}_{\ell\ell'} x_\ell e_{\ell'}^\transpose  \quad i = 1, \dots, r \\
& \mu \geq 0, \quad \eta^+ \geq 0, \quad \eta^- \geq 0
\end{align*}
where the input to the problem is the descriptions of $S$ and $U_0$ ($h_i,b_i$ and $V_j,v_j$) and the measurements $(x_\ell,y_\ell)$ and we have introduced dual variables $\mu_j^{(i)}$ for $i=1,\dots,r$, $j=1,\dots,s$ and $\eta^{+(i)}_{\ell\ell'},\eta^{-(i)}_{\ell\ell'}$ for $i=1,\dots,r$, $\ell=1,\dots,k$ and $\ell' = 1,\dots,n$.
\end{remark}

We note that we can also exactly characterize one-step safety sets in the presence of both disturbances and noisy measurements.
}

\subsection{Numerical Example}\label{sec:nonlinear example}
We present a numerical example with $n=4$.
We take 
\begin{align*}
      S &= \{ x \in \R^4 \mid |x_i| \leq 1, i=1,\dots,4 \}, \\
    U_{0,A} &= \{ A \in \R^{4 \times 4} \mid -4 \leq A_{ij} \leq 8, i = 1, \dots, 4, j = 1, \dots, 4 \}, \\
    U_{0,g} &= \{ g : \R^4 \rightarrow \R^4 \mid \norm{g(x)}_\infty \leq \gamma \quad \forall x \in S \}.
\end{align*}
In \figureref{fig:nonlinear gamma}, we plot $S^1_0$ (the one-step safety region without any measurements) projected to the first two dimensions of $x$ for $\gamma \in \{ 0,0.4,0.8 \}$.
As expected, larger values of $\gamma$ result in smaller one-step safety regions.

For our next experiment,
we choose the matrix $\trueA$ in \eqref{eq:nonlinear_model} to be the same matrix used in the example in \sectionref{sec:one-step example}.
We let $\gamma = 0.1$, and
\[ \trueg (x) = \frac{\gamma}{2} 
\left( x_2^2 - x_3 x_4, \quad
\sqrt{x_1^4 + x_3^4}, \quad
x_3 \sin^2(x_1), \quad
\sin^2(x_2) \right) ^\transpose  \in U_{0,g}. \]

Since the true system is not linear, we cannot hope to learn the dynamics in $n$ steps
as we did in the linear case.
We instead pick $30$ one-step safe points $x_1,\dots,x_{30}$ (by sequentially solving the second-order cone program from \theoremref{thm:nonlinear socp}) and observe $y_k = \truef(x_k)$ for each $k=1,\dots,30$.
In order to encourage exploration of the state space, we optimize in random directions in every iteration (instead of optimizing the same cost function throughout the process).
%Each observation gives us more information on the dynamics and allows us to pick the next point from a larger set.
%Let $S^1_k$ be the one-step safety region after we have seen $k$ data points.
In \figureref{fig:nonlinear learning}, we plot $S^1_k$ (the one-step safety region after $k$ measurements) projected to the first two dimensions of $x$ for $k=0,\dots,30$.
\newnewstuff{Note that $S^1_k$ is the projection of the feasible set of \eqref{eq:nonlinear socp} to $x$-space.}
We also plot the projection of $S^1_\gamma (\trueA)$, which we define
as the set of one-step safe points if we knew $\trueA$, but not $\trueg$
\[ S^1_\gamma (\trueA) \defn \{x \in S \mid \trueA x + g(x) \in S \quad \forall g \in U_{0,g} \}.\]
Note that this set is an outer approximation to $S^1_k$.
Here we see that $S^1_k$ comes close to $S^1_\gamma (\trueA)$ already in thirty iterations.

\begin{figure}
\figureconts
{fig:nonlinear}% label for whole figure
{\caption{One-step safe learning of a nonlinear system associated with the example in \sectionref{sec:nonlinear example}.}}% caption for whole figure
{%
\subfigure[Dependence of $S^1_0$ on $\gamma$.][c]{%
\label{fig:nonlinear gamma}% label for this sub-figure
\includegraphics[width=.5\textwidth -.5em]{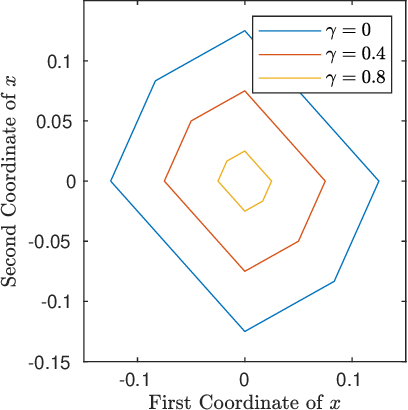}
} % space out the images a bit
\subfigure[$S^1_k$ grows with $k$.][c]{%
\label{fig:nonlinear learning}% label for this sub-figure
\includegraphics[width=.5\textwidth -.5em]{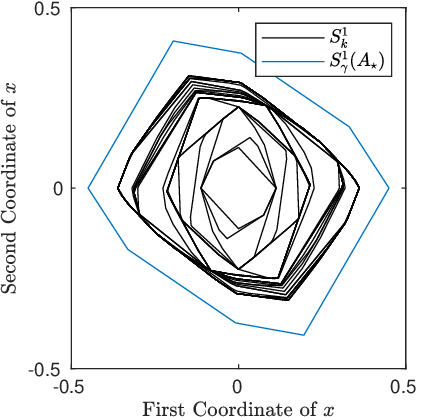}
}
}
\end{figure}

Finally, we undertake the task of learning the unknown nonlinear dynamics.
We only use information from our first $8$ data points in order to make the fitting task more challenging.
We fit a function of the form
\[\hat{f}(x) = \hat{A}x + \hat{g}(x) , \]
where $\hat{A} \in \R^{4 \times 4}$ and each entry of $\hat{g} : \R^4 \rightarrow \R^4$ 
is a homogeneous quadratic function of $x$.
Our regression is done by minimizing the least-squares loss function
\[ L(\hat{f}) =  \sum_{k=1}^8 \norm{\hat{f}(x_k) - y_k}^2. \]
We train two models.
The first model, $\hat{f}_{\mathrm{ls}}$, minimizes the least-squares loss with no constraints.
The second model, $\hat{f}_{\mathrm{SOS}}$, minimizes the least-squares loss
subject to the constraints that $\hat{A} \in U_{0,A}$, $\norm{\hat{A}x_k - y_k}_{\infty} \leq \gamma$ for $k=1,\dots,8$, and $\hat{g} \in U_{0,g}$.
%
%
%We impose on $\hat{A}$ the constraints that $\hat{A} \in U_{0, A}$ and that 
The constraint that $\hat{g} \in U_{0,g}$ is imposed via sum of squares constraints \citetext{see, e.g., \citealp{pablothesis,doi:10.1137/20M1388644} for details}.
More specifically, we require that for $j=1, \dots, 4$,
\[
\gamma \pm \hat{g}_j(x) = \sigma_0^{j,\pm}(x) + \sum_{i=1}^r \sigma_i^{j,\pm}(x) (b_i - h_i^\transpose  x) \quad \forall x \in \R^4.
\]
Here, $\hat{g}_j(x)$ is the $j$-th entry of the vector $\hat{g}(x)$, and the functions $\sigma_i^{j,\pm}$, for $i=0, \dots, r$ and $j=1,\dots,4$, are sum of squares quadratic functions of $x$.
These constraints can be imposed by semidefinite programming.

% The first model, $\hat{f}_{\mathrm{SOS}}$, minimizes the least-squares loss
% subject to the constraints that $\hat{A} \in U_{0,A}$ and that $\hat{g} \in U_{0,g}$
% by the sum of squares approach described above.
%
We sample test points $z_1,\dots,z_{1000}$ uniformly at random in $S$ in order to estimate the 
generalization error.
The root-mean-square error (RMSE) is computed as
\[ \text{RMSE} (\hat{f}) = \sqrt{ \frac{1}{1000} \sum_{j=1}^{1000} \norm{ \hat{f}(z_i) - \truef(z_i)}^2 }. \]
The $\text{RMSE}(\hat{f}_{\mathrm{SOS}})$ of the constrained model is $0.0851$ and the $\text{RMSE}(\hat{f}_{\mathrm{ls}})$ of the unconstrained model is $0.2567$.
We see that imposing prior knowledge with sum of squares constraints
results in a significantly better fit.
\newstuff{\section{Infinite-Step Safe Learning of Nonlinear Systems}
\label{sec:inf_nonlinear}

\newnewstuff{In our final technical section,} we consider the problem of safely learning a dynamical system
of the same form as in \Cref{sec:nonlinear}, i.e.,
\begin{equation}\label{eq:nonlinear_model_inf}
x_{t+1} = \trueA x_t + \trueg (x_t)
\end{equation}
involving some matrix $\trueA \in \R^{n \times n}$ and some possibly nonlinear map $\trueg :  \R^n \rightarrow \R^n$.
We take the safety region $S \subset \R^n$ to be the same as \eqref{eq:S polyhedron}.
We take our initial knowledge about $\trueA, \trueg$ to be membership in the following sets:
\begin{align*}
    U_{0,A} &\defn \left \{ A \in \R^{n \times n} \mid  \Tr(V_j^\transpose  A) \leq v_j \quad j = 1, \dots ,s \right \}, \\
    U_{0,g} &\defn \{ g : \R^n \rightarrow \R^n \mid \norm{g(x)} \leq \gamma \norm{x} \quad \forall x \in S \},
\end{align*}
\newnewstuff{where $\gamma$ is a given nonnegative constant.}
%\st{: why is this norm $\ell_2/\ell_2$ but in previous section was $\ell_\infty/\ell_p$.}%
%Here, it is convenient to use the $\ell_2$ norm because of how it interacts with the spectral norm of matrices and also because the square of the $\ell_2$ norm is a polynomial of the input vector.
In the definition of $U_{0,g},$ it is convenient to use the $\ell_2$ norm because of some technical reasons that will become clear in the proofs of the statements in this section. Again for simplicity, we assume that for some vector $c \in \R^n$, initializing the system at a point $x\in S$ comes at the cost $c^\transpose  x$.

Just as in \sectionref{sec:linear}, the notion of safety in this section is that of infinite-step safety; i.e., we can only initialize the system at points whose entire trajectory will remain in $S$ under all dynamical systems consistent with the information at hand.
By observing these trajectories, we can safely reduce our uncertainty on $\trueA$ and fit a parametric model to $\trueg$ that respects the constraints in $U_{0,g}$ (in the same way that we did in~\sectionref{sec:nonlinear}).
Unlike the setting of infinite-step safe learning of linear systems (\sectionref{sec:linear}), it might be useful to observe trajectories of length greater than $n$.
%Our goal in this section is to demonstrate how to safely collect infinite-step safe trajectories for \eqref{eq:nonlinear_model_repeated} at minimum cost.
%\newnewstuff{We choose an initialization point and then observe the infinite trajectory starting from that point; safety requires that the entire infinite trajectory remains in the safety region.
%Unlike in the setting of infinite-step safe learning of linear systems (\sectionref{sec:linear}),} it might be useful to observe trajectories of length greater than $n$.
Assuming there is some limitation on \newnewstuff{memory}, it is sensible to truncate each trajectory after some time.
Suppose that we have collected $k$ trajectories and that the $\ell$th trajectory is of length $n_{\ell}$.
Let $y_{t,\ell} \in \Rn$ be the $t$th observed vector in the $\ell$th trajectory with $y_{0,\ell}$ being the trajectory's initialization.
With these observations, we can reduce our uncertainty around $\trueA$ as follows:
\begin{align} \nonumber
    U_{k,A} = \{ A \in U_0 \mid \norm{A y_{t-1,\ell} - y_{t,\ell}} \leq \gamma \norm{y_{t-1,\ell}} \quad t=1,\dots,n_{\ell}\quad \ell=1,\dots,k\}.
\end{align}
Since $U_{0,g}$ contains the zero map (i.e., one possibility for the unknown dynamics is $x_{t+1}~=~A x_t$ for some matrix $A\in U_{0,A}$), the problem considered in this section is at least as hard as that of \sectionref{sec:linear}. Therefore, we make the same assumptions as \sectionref{sec:linear} to have a full-dimensional infinite-step safety set. 
%in order to have a full-dimensional infinite-step safety set, we must make (at least) the same assumptions as we did in \sectionref{sec:linear}.
% If we take $\gamma = 0$, the nonlinear part of the dynamics vanishes and we are back in the setting of \sectionref{sec:linear}.
% Therefore, we will use similar techniques and assumptions as in \sectionref{sec:linear}.
In particular, we assume that the origin is in the interior of $S$, which means that $S$ can be described as \eqref{eq:S_polyhedron_all_ones}, and that \newnewstuff{all matrices in $U_{0,A}$ are stable}.
%Again for simplicity, we assume a linear \newnewstuff{initialization} cost $c^\transpose  x$ for some vector $c \in \R^n$.
Having collected $k$ infinite-step safe trajectories, the optimization problem we are interested in solving to find the next cheapest infinite-step safe \newnewstuff{initialization point} is:
\begin{equation}\label{eq:inf_nonlinear}
\begin{aligned}
\min_{x} \quad & c^\transpose  x &\\
\textrm{s.t.} \quad & x \in S &\\
& f^t(x) \in S \quad \forall~f \in \{ x \mapsto Ax+g(x) \mid A \in U_{k, A}, g \in U_{0, g}\} \quad t=1,2,\dots.
\end{aligned}
\end{equation}
In keeping with the naming conventions of this work, we refer to the feasible region of \eqref{eq:inf_nonlinear} as $S_k^{\infty}$, and if $U_{0,A}$ is a single matrix $A$, we call it $S^{\infty}_{\gamma}(A)$.
We can now present the main theorem of this section, which enables us to find infinite-step safe \newnewstuff{initialization} points \versionii{(i.e., the version of \eqref{eq:greedy_safe} for this specific case)}.
%
% $U_k = \{ A \in U_0 : A x_j = \trueA^i x_j, i=1, ..., n, j=1, ..., k \}$
% $y_{i,j} = \trueA^i x_j$.
%
% $U_k = \{ \Tr(A^\transpose  V_1), ..., \Tr(A^\transpose  V_r), A^i x_j = y_{i,j}, i=1, ..., n, j=1, ..., k \}$
%
\begin{theorem} \label{thm:inf_nonlinear}
Let the polyhedron $S \subseteq \R^n$
be as in \eqref{eq:S_polyhedron_all_ones}, and the polyhedron
$U_{0,A} \subseteq \R^{n \times n}$ be
as in~\eqref{eq:U_0 polyhedron}.
For $\ell=1,\dots,k$ and $t = 0,\dots,n_{\ell}$, let $y_{t,\ell}$ be the $t$th vector in the $\ell$th observed trajectory.
For an even integer $d$, let
$\tilde{S}_{k,d}^{\infty}$ be the projection to $x$-space of the feasible region of the following optimization problem:

\begin{subequations}\label{eq:inf_nonlinear_sdp}
\begin{align}
& \min_{x,Q,M_j,M_{t\ell},\hat{M}_j,\hat{M}_{t\ell},\sigma_{ij},\sigma_{it\ell},\varepsilon,\lambda} \quad  c^\transpose  x \tag{\ref{eq:inf_nonlinear_sdp}}\\
& \mathrm{s.t.} \quad   \begin{bmatrix}
 Q(A) - A Q(A) A^\transpose  &  -A Q(A) \\
 -Q(A) A^\transpose  & -Q(A)
\end{bmatrix} -  \lambda \begin{bmatrix}
 \gamma^2 I &  0 \\
 0 & -I
\end{bmatrix} = \varepsilon I + M_0(A) + \sum_{j=1}^s M_j(A) (v_j - \Tr(V_j^\transpose  A)) \label{subeq:stabil n} \\
&\quad \quad \quad \quad \quad \quad + \sum_{\ell=1}^{k} \sum_{t=1}^{n_{\ell}} M_{t \ell}(A) (\gamma^2\norm{y_{t-1,\ell}}^2 - \norm{A y_{t-1,\ell} - y_{t,\ell}}^2) \quad \forall A \in \Rnn \nonumber \\
& 1 - h_i^\transpose  Q(A) h_i = \sigma_{i0}(A) + \sum_{j=1}^s \sigma_{ij}(A) (v_j - \Tr(V_j^\transpose  A)) \label{subeq:polar dual n} \\
&\quad \quad \quad \quad \quad \quad
 + \sum_{\ell=1}^{k} \sum_{t=1}^{n_{\ell}} \sigma_{i t \ell}(A) (\gamma^2\norm{y_{t-1,\ell}}^2 - \norm{A y_{t-1,\ell} - y_{t,\ell}}^2)
\quad i = 1, \dots, r \quad \forall A \in \Rnn \nonumber \\
& \begin{bmatrix}
 Q(A) &  x \\
 x^\transpose  & 1 
\end{bmatrix} = \hat{M}_0(A) + \sum_{j=1}^s \hat{M}_j(A) (v_j - \Tr(V_j^\transpose  A)) \label{subeq:ellipsoid n} \\
&\quad \quad \quad \quad \quad \quad + \sum_{\ell=1}^{k} \sum_{t=1}^{n_{\ell}} \hat{M}_{t \ell}(A) (\gamma^2\norm{y_{t-1,\ell}}^2 - \norm{A y_{t-1,\ell} - y_{t,\ell}}^2)
\quad \forall A \in \Rnn \nonumber \\
& \varepsilon > 0\label{subeq:strict n} \\
& \lambda \geq 0,
\end{align}
\versionii{\begin{itemize}
\item where $Q(A)$ is an $n \times n$ SOS matrix with degree at most $d$, 
\item $M_j(A)$ are $2n \times 2n$ SOS matrices with degree at most $d$ for $j=0,\dots,s$,
\item $M_{t \ell}(A)$ are $2n \times 2n$ SOS matrices with degree at most $d$ for $\ell=1,\dots, k, \:t=1,\dots,n_{\ell}$, 
\item $\hat{M}_j(A)$ are $(n+1) \times (n+1)$ SOS matrices with degree at most $d$ for $j=0,\dots,s$,
\item $\hat{M}_{t \ell}(A)$ are $(n+1) \times (n+1)$ SOS matrices with degree at most $d$ for $\ell=1,\dots, k, \: t=1,\dots,n_{\ell}$, 
\item $\sigma_{ij}(A)$ are SOS polynomials with degree at most $d$ for $i=1,\dots,r, \: j=0,\dots,s$,
\item and $\sigma_{i t \ell}(A)$ are SOS polynomials with degree at most $d$ for $i=1,\dots,r,\: \ell=1,\dots, k, \: t=1,\dots,n_{\ell}$.
\end{itemize}}
\end{subequations}
\newnewstuff{Then,
\begin{enumerate}
    \item[(i)] The program \eqref{eq:inf_nonlinear_sdp} can be reformulated as a semidefinite program of size polynomial in the size of the input ($S$, $U_0$, $\{y_{t,\ell}\}$, $c$, $\gamma$).
    \item[(ii)] We have $\tilde{S}_{k,d}^{\infty} \subseteq S_k^\infty$ (i.e, any vector $x$ feasible to this semidefinite program is infinite-step safe).
    \item[(iii)] Furthermore, if $U_{0,A}$ is compact and contains only stable matrices, and if $\gamma$ is smaller than some positive \newnewstuff{threshold} depending on $U_{0,A}$, then, for large enough $d$, the set $\tilde{S}_{k,d}^{\infty}$ is full-dimensional.
\end{enumerate}}
\end{theorem}
\newnewstuff{In words, \theoremref{thm:inf_nonlinear} allows us to optimize \versionii{the} initialization cost over semidefinite representable subsets of the set of points which are infinite-step safe under all nonlinear dynamics consistent with information at hand.
While the theorem guarantees full-dimensionality of these subsets for sufficiently small $\gamma$ and large $d$, in our experience, even when $\gamma$ is relatively large, small values of $d$ suffice for safe learning; see \sectionref{sec:nonlinear example}.}

\versionii{
\begin{remark}
We note that problem \eqref{eq:inf_nonlinear_sdp}
can be modified so that
infinite-step safety is achieved in the presence of 
bounded measurement noise.
That is, suppose that instead of directly observing $y_{t,\ell} = \trueA y_{t-1,\ell} + \trueg(y_{t-1,\ell})$, we observe
\[
\hat{y}_{t,\ell} = y_{t,\ell} + z_{t,\ell},
\]
where $z_{t,\ell}$ represents the noise in the measurement, $\trueA \in U_{0,A}$, and $\trueg \in U_{0,g}$.
We can still give an SOS programming-based inner approximation of the infinite-step safety set in the case when we have $\|z_{t,\ell}\| \leq Z_{t,\ell}$, where $\norm{\cdot}$ is, e.g., any polynomial norm (see \cite{polynomial_norms} for a definition) or any norm whose unit ball is a polytope, and $Z_{t,\ell}$ is a given scalar.
To see this, observe that the vectors $\hat{y}_{t,\ell}$ will satisfy
\[
\hat{y}_{t,\ell} = \trueA (\hat{y}_{t-1,\ell} - z_{t-1,\ell}) + \trueg(y_{t-1,\ell}) + z_{t,\ell}.
\]
Now for example, if $\norm{\cdot}$ is the Euclidean norm, and if we have $\max_{A \in U_{k,A}} \|A\| \leq M_k$ for some constant $M_k$ (computed, e.g., by a semidefinite relaxation), we can derive the following inequality:
\begin{align*}
\|\hat{y}_{t,\ell} - \trueA \hat{y}_{t-1,\ell} \| &\leq \|\trueA\| \|z_{t-1,\ell}\| + \|\trueg(y_{t-1,\ell})\| + \|z_{t,\ell}\| \\
&\leq M_k Z_{t-1,\ell} + \gamma \|y_{t-1,\ell}\| + Z_{t,\ell}\\
&\leq M_k Z_{t-1,\ell} + \gamma (\|\hat{y}_{t-1,\ell}\| + Z_{t-1,\ell}) + Z_{t,\ell}.
\end{align*}

We can then adapt the methodology of \theoremref{thm:inf_nonlinear} by multiplying the SOS matrices and polynomials in semidefinite program \eqref{eq:inf_nonlinear_sdp} by the polynomials\[\left \{A \mapsto (M_k Z_{t-1,\ell} + \gamma (\|\hat{y}_{t-1,\ell}\| + Z_{t-1,\ell}) + Z_{t,\ell})^2 - \left\| \hat{y}_{t,\ell} - A \hat{y}_{t-1,\ell} \right\|^2 \right \}_{t,\ell}\]
instead of $\{A \mapsto \gamma^2\norm{y_{t-1,\ell}}^2 - \norm{A y_{t-1,\ell} - y_{t,\ell}}^2\}_{t,\ell}$.
For this modified SOS program, all claims of \theoremref{thm:inf_nonlinear} hold.
\end{remark}

}

Before we present a proof of \theoremref{thm:inf_nonlinear}, we introduce a ``nonlinear version'' of \eqref{eq:RDO}, which applies to the case of a fixed matrix $A$:
\begin{equation}\label{eq:nonlinear_RDO}
\begin{aligned}
\min_{x \in \Rn,Q \in \Snn,\lambda \in \R} \quad & c^\transpose  x\\
\textrm{s.t.} \quad & Q \succ 0 \\
& \begin{bmatrix}
 Q - A Q A^\transpose  &  -A Q \\
 -Q A^\transpose  & -Q
\end{bmatrix} \succeq  \lambda \begin{bmatrix}
 \gamma^2 I &  0 \\
 0 & -I
\end{bmatrix} \\
& h_i^\transpose  Q h_i \leq 1 \quad i = 1, \dots, r \\
& \begin{bmatrix}
 Q &  x \\
 x^\transpose  & 1 
\end{bmatrix} \succeq 0 \\
& \lambda \geq 0.
\end{aligned}
\end{equation}
We now prove a nonlinear version of \lemmaref{lem:RDO}.
Recall the definition of the set $S^{\infty}_{\gamma}(A)$ from the paragraph after \eqref{eq:inf_nonlinear}.
\begin{lemma}\label{lem:nonlinear_RDO}
Let $\tilde{S}^{\infty}_{\gamma}(A)$ be the projection to $x$-space of the feasible region of \eqref{eq:nonlinear_RDO}.
Then, we have $\tilde{S}^{\infty}_{\gamma}(A) \subseteq S^{\infty}_{\gamma}(A)$.
\end{lemma}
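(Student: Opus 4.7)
The plan is to mirror the proof of \lemmaref{lem:RDO}, replacing the linear invariance condition by one that accommodates the nonlinear perturbation $g$. Given any tuple $(x, Q, \lambda)$ feasible to \eqref{eq:nonlinear_RDO}, I will introduce the ellipsoid $E \defn \{y \in \Rn : y^T Q^{-1} y \leq 1\}$ (well-defined because $Q \succ 0$) and establish three properties: (a) $E \subseteq S$, (b) $x \in E$, and (c) $E$ is forward-invariant under every dynamics $y \mapsto Ay + g(y)$ with $\|g(y)\| \leq \gamma\|y\|$. Chaining these three facts shows that every forward orbit starting at $x$ remains in $S$, hence $x \in S^\infty_\gamma(A)$.

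Parts (a) and (b) I would reuse directly from the linear case. The constraints $h_i^T Q h_i \leq 1$ place each $h_i$ in the polar $E^\circ = \{z : z^T Q z \leq 1\}$, so by convexity $S^\circ = \mathrm{conv}\{h_i\} \subseteq E^\circ$, and taking polars gives $E \subseteq S$; this is (a). For (b), applying the Schur complement to $\begin{bmatrix} Q & x \\ x^T & 1\end{bmatrix} \succeq 0$ with respect to the positive definite block $Q$ yields $x^T Q^{-1} x \leq 1$.

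The heart of the argument is (c). By joint homogeneity in $y$ of both $E$ and the norm constraint $\|w\| \leq \gamma\|y\|$, a rescaling $y \mapsto \alpha y$, $w \mapsto \alpha w$ shows that invariance of $E$ is equivalent to the Lyapunov-type decrease $(Ay + w)^T Q^{-1}(Ay + w) \leq y^T Q^{-1} y$ for all $y, w \in \Rn$ with $\|w\| \leq \gamma\|y\|$. I would encode this implication as a quadratic-form inequality in the stacked vector $v = [y;w] \in \R^{2n}$ and apply the homogeneous S-lemma, producing a single $2n \times 2n$ LMI parameterized by a nonnegative multiplier $\lambda$. A congruence transformation --- the analog of the double Schur-complement move used in \lemmaref{lem:RDO} to pass between $Q^{-1} \succeq A^T Q^{-1} A$ and $Q \succeq AQA^T$ --- then converts the native $Q^{-1}$-form arising from the S-procedure into the $Q$-form imposed in \eqref{eq:nonlinear_RDO}, with the condition $\lambda I \succeq Q$ implicit in the $(2,2)$ block serving as the Slater-type hypothesis that makes the S-procedure lossless.

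The main obstacle will be the algebraic bookkeeping in (c): tracking the congruence so that precisely the block pattern $\begin{bmatrix} Q - AQA^T & -AQ \\ -QA^T & -Q \end{bmatrix}$ appears on the left and $\begin{bmatrix} \gamma^2 I & 0 \\ 0 & -I \end{bmatrix}$ on the right, with the direction of the matrix inequality matching the direction of the original implication. Once (c) is in hand the proof closes immediately: since $x \in E \subseteq S$ and $E$ is invariant under every admissible $g$, every iterate of the nonlinear dynamics remains in $E$ and hence in $S$, which is exactly $x \in S^\infty_\gamma(A)$.
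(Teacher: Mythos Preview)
Your scaffold (a)--(c) matches the paper exactly, and (a), (b) are handled identically. The gap is in (c). The step you call a ``congruence transformation'' is not a congruence of the $2n\times 2n$ block LMI: there is no invertible $T$ for which $T^T\begin{bmatrix}\gamma^2 I&0\\0&-I\end{bmatrix}T$ remains $\begin{bmatrix}\gamma^2 I&0\\0&-I\end{bmatrix}$ while simultaneously sending $\begin{bmatrix}Q-AQA^T&-AQ\\-QA^T&-Q\end{bmatrix}$ to the $Q^{-1}$-form $\begin{bmatrix}Q^{-1}-A^TQ^{-1}A&-A^TQ^{-1}\\-Q^{-1}A&-Q^{-1}\end{bmatrix}$. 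So ``algebraic bookkeeping'' will not close (c); a genuine idea is needed to pass from the $Q$-side to the $Q^{-1}$-side.

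The paper supplies that idea by routing through a linear perturbation. From the given LMI one reads off, for every $(\hat x,\hat y)$ with $\|\hat y\|\le\gamma\|\hat x\|$, the scalar inequality $\hat x^TQ\hat x\ge(A^T\hat x+\hat y)^TQ(A^T\hat x+\hat y)$. Taking $\hat y=B^T\hat x$ for an arbitrary $B$ with $\|B\|\le\gamma$ upgrades this to the \emph{single-block} matrix inequality $(A+B)Q(A+B)^T\preceq Q$, and it is at this $n\times n$ level---not the $2n\times 2n$ level---that the double Schur-complement move of \lemmaref{lem:RDO} applies to give $(A+B)^TQ^{-1}(A+B)\preceq Q^{-1}$. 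One then specializes to the rank-one $B_{\bar x}=g(\bar x)\bar x^T/\|\bar x\|^2$, which has $\|B_{\bar x}\|\le\gamma$ and satisfies $(A+B_{\bar x})\bar x=A\bar x+g(\bar x)$, yielding invariance of $E$. A minor side remark: for this lemma you only need the trivial direction of the S-procedure (multiplier $\Rightarrow$ implication), so no Slater/losslessness argument is required; the observation $\lambda I\succeq Q$ from the $(2,2)$ block is a consequence of the LMI, not a hypothesis you need to invoke.
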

\begin{proof}
Let $x,Q,$ and $\lambda$ be feasible to \eqref{eq:nonlinear_RDO}.
As in the proof of \lemmaref{lem:RDO}, the constraints $h_i^\transpose  Q h_i \leq 1$, $i = 1, \dots, r$, imply \[\{x \mid x^\transpose  Q^{-1} x \leq 1\} \subseteq S,\] and the constraint $\begin{bmatrix}
 Q &  x \\
 x^\transpose  & 1 
\end{bmatrix} \succeq 0$ implies $x^\transpose  Q^{-1} x \leq 1$.
Thus $x$ is in the safety region $S$.
\newnewstuff{
To show that the trajectory remains safe for all time, it suffices to show that the set $\{x \mid x^\transpose  Q^{-1} x \leq 1\}$ is invariant under all valid dynamics, i.e., for any vector $\bar{x}$ in this set and any vector $g(\bar{x})$ satisfying $\norm{g(\bar{x})} \leq \gamma \norm{\bar{x}}$, the vector $A\bar{x} + g(\bar{x})$ is also in the set.

Let $B \in \Rnn$ be an arbitrary matrix and let $\norm{B}$ denote its spectral norm.
We first claim that if $\norm{B} \leq \gamma$, then $(A+B)^\transpose  Q^{-1} (A+B) \preceq Q^{-1}$.
Fix an arbitrary vector $\hat{x}$ and let $\hat{y} = B^\transpose  \hat{x}$.
By the bound on the spectral norm of $B$, we have $\norm{\hat{y}} \leq \gamma \norm{\hat{x}}$.
By the second linear matrix inequality in \eqref{eq:nonlinear_RDO}, we have
\[
\begin{bmatrix}
 \hat{x} \\
 \hat{y}
\end{bmatrix}^\transpose  \begin{bmatrix}
 Q - A Q A^\transpose  &  -A Q \\
 -Q A^\transpose  & -Q
\end{bmatrix} \begin{bmatrix}
 \hat{x} \\
 \hat{y}
\end{bmatrix} \geq \lambda \begin{bmatrix}
 \hat{x} \\
 \hat{y}
\end{bmatrix}^\transpose  \begin{bmatrix}
 \gamma^2 I &  0 \\
 0 & -I
\end{bmatrix} \begin{bmatrix}
 \hat{x} \\
 \hat{y}
\end{bmatrix}.
\]
Rearranging, we have
\[
\hat{x}^\transpose  Q \hat{x} - (\hat{x}^\transpose  A Q A^\transpose  \hat{x} + 2\hat{x}^\transpose AQ\hat{y} + \hat{y}^\transpose Q\hat{y}) \geq 
\lambda (\gamma^2 \hat{x}^\transpose  \hat{x} - \hat{y}^\transpose  \hat{y}).
\]
Since $\lambda \geq 0$ and $\norm{\hat{y}} \leq \gamma \norm{\hat{x}}$, we have $\hat{x}^\transpose  Q \hat{x} \geq \hat{x}^\transpose  A Q A^\transpose  \hat{x} + 2\hat{x}^\transpose AQ\hat{y} + \hat{y}^\transpose Q\hat{y},$ which implies \[\hat{x}^\transpose (A+B) Q (A+B)^\transpose \hat{x} \leq \hat{x}^\transpose  Q \hat{x}.\] This is equivalent to the claimed linear matrix inequality by two applications of the Schur complement lemma.

Now we show invariance of the set $\{x \mid x^\transpose  Q^{-1} x \leq 1\}$ under all valid dynamics.
Let $\bar{x}$ be any vector such that $\bar{x}^\transpose  Q^{-1} \bar{x} \leq 1$, and let $B_{\bar{x}} \defn \frac{g(\bar{x})\bar{x}^\transpose }{\norm{\bar{x}}^2}$.
From the definition of $U_{0,g}$, we have $\norm{B_{\bar{x}}} \leq \gamma$.
By the claim we established above, we have $\bar{x}^\transpose (A+B_{\bar{x}})^\transpose Q^{-1}(A+B_{\bar{x}})\bar{x} \leq \bar{x}^\transpose  Q^{-1} \bar{x}$.
Since $(A+B_{\bar{x}})\bar{x} = A\bar{x} + g(\bar{x})$, it follows that
\[ \big( A\bar{x} + g(\bar{x}) \big) ^\transpose  Q^{-1} \big( A\bar{x} + g(\bar{x}) \big) \leq \bar{x}^\transpose  Q^{-1} \bar{x} \leq 1.\]
Thus, $A\bar{x} + g(\bar{x})$ is in the set $\{x \mid x^\transpose  Q^{-1} x \leq 1\}$ as desired.
}
\end{proof}
% We can now create a robust version of \eqref{eq:inf_linear_sdp}:
% \begin{equation}\label{eq:nonlinear_sdp}
% \begin{aligned}
% \min_{x,Q,M_j,\hat{M}_j,\sigma_j,\lambda} \quad & c^\transpose  x\\
% \textrm{s.t.} \quad & Q(A), M_j(A),\hat{M}_j(A),\sigma_j(A) \textnormal{ are SOS with degree} \leq d \quad j=0,\dots,s\\
% & \begin{bmatrix}
%  Q - A Q A^\transpose  &  -A Q \\
%  -Q A^\transpose  & -Q
% \end{bmatrix} - \lambda \begin{bmatrix}
%  \gamma^2 I &  0 \\
%  0 & -I
% \end{bmatrix} = I + M_0(A) + \sum_{j=1}^s M_j(A) (v_j - \Tr(V_j^\transpose  A)) \quad \forall A \in \Rnn\\
% & 1 - h_i^\transpose  Q(A) h_i = \sigma_0(A) + \sum_{j=1}^s \sigma_j(A) (v_j - \Tr(V_j^\transpose  A)) \quad i = 1, \dots, r \quad \forall A \in \Rnn \\
% & \begin{bmatrix}
%  Q(A) &  x \\
%  x^\transpose  & 1 
% \end{bmatrix} = \hat{M}_0(A) + \sum_{j=1}^s \hat{M}_j(A) (v_j - \Tr(V_j^\transpose  A)) \quad \forall A \in \Rnn \\
% & \lambda \geq 0.
% \end{aligned}
% \end{equation}
We can now present the proof of the main result of this section.
\begin{proof}[Proof of \theoremref{thm:inf_nonlinear}]

(i)
We make a similar argument as in the proof of (i) in \theoremref{thm:linear}.
Recall that for any fixed degree $d$, the SOS constraints can be reformulated as semidefinite programming constraints of size polynomial in $n$.
The ``$\forall A$'' constraints can be imposed by coefficient matching via a number of linear equations bounded by a polynomial function of the size of the input $(S,U_{0,A},\{y_{t,\ell}\},c,\gamma)$.
The constraint that $\varepsilon > 0$ can be rewritten as the constraint $\begin{bmatrix} \varepsilon &1 \\ 1& \delta \end{bmatrix} \succeq 0 $ for a new variable $\delta$.
Therefore, \newnewstuff{for any fixed degree $d$,} \eqref{eq:inf_linear_sdp} is a semidefinite program of size polynomial in the size of the input $(S,U_{0,A},\{y_{t,\ell}\},c,\gamma)$.

(ii)
Let $(x,Q,M_j,M_{t\ell},\hat{M}_j,\hat{M}_{t\ell},\sigma_{ij},\sigma_{it\ell},\varepsilon,\lambda)$ be feasible to \eqref{eq:inf_nonlinear_sdp}.
Then, it is straightforward to check that 
for every $A \in U_{k,A}$, the tuple $(x, Q(A),\lambda)$ satisfies the following constraints:
\begin{equation}
\begin{aligned}
& Q(A) \succ 0 \\
& \begin{bmatrix}
 Q(A) - A Q(A) A^\transpose  &  -A Q(A) \\
 -Q(A) A^\transpose  & -Q(A)
\end{bmatrix} \succeq  \lambda \begin{bmatrix}
 \gamma^2 I &  0 \\
 0 & -I
\end{bmatrix}\\
& h_i^\transpose  Q(A) h_i \leq 1 \quad i = 1, \dots, r \\
& \begin{bmatrix}
 Q(A) &  x \\
 x^\transpose  & 1 
\end{bmatrix} \succeq 0,
\end{aligned}
\end{equation}
and therefore, by \lemmaref{lem:nonlinear_RDO}, we have $x \in \tilde{S}^\infty_{\gamma} (A) \subseteq S^\infty_{\gamma} (A)$.
Hence,
\[
x \in \bigcap_{A \in U_{k,A}} S^\infty_{\gamma} (A) = S_k^\infty,
\]
implying that $\tilde{S}_{k,d}^\infty \subseteq S_k^\infty$.

(iii)
Suppose that $U_{0,A}$ is compact and contains only stable matrices.
By \lemmaref{lem:polynomial_lyapunov} and the arguments in the beginning of the proof of (iii) in \theoremref{thm:linear}, we can find an SOS matrix $\hat{Q}(A)$ satisfying
\begin{align*}
\hat{Q}(A) &\succ 0 \quad \forall A \in U_{0,A} \\
\hat{Q}(A) &\succ A \hat{Q}(A) A^\transpose   \quad \forall A \in U_{0,A}.
\end{align*}
In particular, there must be a positive constant $\delta$ such that $\hat{Q}(A) - A\hat{Q}(A)A^\transpose  \succeq \delta I$ for all $A\in U_{0,A}$.
Let $\hat{\lambda} \defn 1 + \max_{A \in U_{0,A}} \norm{\hat{Q}(A)A^\transpose  (\hat{Q}(A)-A\hat{Q}(A)A^\transpose  - \frac{\delta}{2}I)^{-1}A\hat{Q}(A) + \hat{Q}(A)}$, and take $\gamma$ to be a positive scalar less than $\sqrt{\frac{\delta}{2\lambda}}$.
By the Schur complement lemma and the fact that $\frac{\delta}{2} > \hat{\lambda} \gamma^2$, it follows that for every $A \in U_{0,A}$,
\[
\begin{bmatrix}
 \hat{Q}(A) - A \hat{Q}(A) A^\transpose  &  -A \hat{Q}(A) \\
 -\hat{Q}(A) A^\transpose  & -\hat{Q}(A)
\end{bmatrix} \succeq \begin{bmatrix}
 \frac{\delta}{2}I &  0 \\
 0 & -(\hat{\lambda}-1) I
\end{bmatrix} \succ \hat{\lambda} \begin{bmatrix}
 \gamma^2 I &  0 \\
 0 & - I
\end{bmatrix}.
\]
Since $U_{k,A}$ is compact, there exists a scalar $\alpha > 0$ satisfying $\max_{i\in\{1,\dots,r\},A \in U_{k,A}} h_i^\transpose \hat{Q}(A)h_i <~\alpha$.
Let us define $Q \defn \hat{Q}/\alpha$ and $\lambda \defn \hat{\lambda}/\alpha$.
Since $Q(A) \succ 0$ for all $A \in U_{k,A}$, we can find a scalar $\beta > 0$ such that $\begin{bmatrix}
 Q(A) - \beta I &  0 \\
 0 & \frac{1}{2} 
\end{bmatrix} \succ~0$ for all $A \in U_{k,A}$.
Summarizing, so far we have:
\begin{equation}\label{eq:nonlinear_sdp_proof}
\begin{aligned} 
\begin{bmatrix}
 Q(A) - A Q(A) A^\transpose  &  -A Q(A) \\
 -Q(A) A^\transpose  & -Q(A)
\end{bmatrix} -  \lambda \begin{bmatrix}
 \gamma^2 I &  0 \\
 0 & -I
\end{bmatrix} & \succ 0 \quad \forall A \in U_{k,A}\\
1 - h_i^\transpose  Q(A) h_i & > 0 \quad \forall A \in U_{k,A}\quad i = 1, \dots, r \\
\begin{bmatrix}
 Q(A) - \beta I &  0 \\
 0 & \frac{1}{2} 
\end{bmatrix} & \succ 0 \quad \forall A \in U_{k,A}.
\end{aligned}
\end{equation}
Since $U_{0,A}$ is a bounded polyhedron, the set of inequalities that define it (i.e., $\{A \rightarrow v_j - \Tr(V_j^\transpose  A)\}_{j=1}^s$) satisfy the Archimedian property.
Consider the following set of polynomials:
\[
\mathcal{G} \defn \{A \rightarrow v_j - \Tr(V_j^\transpose  A)\}_{j=1}^s \cup \bigcup_{\ell=1}^k \bigcup_{t=1}^{n_{\ell}} \{A \rightarrow \gamma^2\norm{y_{t-1,\ell}}^2 - \norm{A y_{t-1,\ell} - y_{t,\ell}}^2\}.
\]
As a superset of a set satisfying the Archimedian property, this set also satisfies the Archimedian property.
By applying \theoremref{thm:putinar} and \theoremref{thm:matrix psatz} to \eqref{eq:nonlinear_sdp_proof}, we can \newnewstuff{conclude the existence of} SOS matrices, $M_j,M_{t\ell},\hat{M}_j,\hat{M}_{t\ell}$, and SOS polynomials, $\sigma_{ij},\sigma_{it\ell}$ of some degree $d$, and a positive scalar $\varepsilon$ satisfying \newnewstuff{\eqref{subeq:stabil n}, \eqref{subeq:polar dual n}, \eqref{subeq:strict n}, and} the following:
\[
\begin{aligned}
\begin{bmatrix}
 Q(A) - \beta I &  0 \\
 0 & \frac{1}{2} 
\end{bmatrix} &= \hat{M}_0(A) + \sum_{j=1}^s \hat{M}_j(A) (v_j - \Tr(V_j^\transpose  A))  \\
&+ \sum_{\ell=1}^{k} \sum_{t=1}^{n_{\ell}} \hat{M}_{t \ell}(A) (\gamma^2\norm{y_{t-1,\ell}}^2 - \norm{A y_{t-1,\ell} - y_{t,\ell}}^2)
\quad \forall A \in \Rnn.
\end{aligned}
\]
Note that for any $x$ satisfying $\norm{x} \leq \sqrt{\frac{1}{2\beta}}$, we have that $\begin{bmatrix}
 \beta I &  x \\
 x^\transpose  & \frac{1}{2}
\end{bmatrix} \succeq 0$.
Therefore, \[A \mapsto \hat{M}_0(A) + \begin{bmatrix}
 \beta I &  x \\
 x^\transpose  & \frac{1}{2}
\end{bmatrix}\] is still an SOS matrix of the same degree as $\hat{M}_0(A)$.
Hence, for any $x$ satisfying $\norm{x} \leq \sqrt{\frac{1}{2\beta}}$,
we have that the tuple 
\[
    \left( x,Q,M_j,M_{t\ell},\hat{M}_0(A) + \begin{bmatrix}
 \beta I &  x \\
 x^\transpose  & \frac{1}{2}
\end{bmatrix},\hat{M}_j,\hat{M}_{t\ell},\sigma_{ij},\sigma_{it\ell},\varepsilon,\lambda \right)
\]
is feasible to \eqref{eq:inf_nonlinear_sdp}.
% Sketch: take $Q$ and other variables as before, take $\gamma$ such that $\lambda \gamma^2 < \varepsilon$, and take $\lambda$ large enough for the new LMI to hold, then apply standard psatz arguments.
\end{proof}
\subsection{Numerical Example}\label{sec:nonlinear_example}
We present a numerical example with $n=2$.
Here we take $S$ and $U_{0,A}$ to be the same as $S$ and $U_0$ in \sectionref{sec:infinite_example_one}.
We solve the semidefinite program in \eqref{eq:inf_nonlinear_sdp} with degree $d=4$ (the program with $d=2$ is infeasible).
%\newnewstuff{Just as in \sectionref{sec:infinite_example_one}, $d=2$ is not high enough.}
In Figure~\ref{fig:inf_nonlinear}, we plot the safety region $S$, and our semidefinite programming based inner approximations $\tilde{S}_{0,4}^{\infty}$ of the infinite-step safe set $S^\infty_0$ for $\gamma=0,0.02,0.04,0.06$.
\newnewstuff{We also plot a set $\bar{S}_0^\infty$, which is the same outer approximation of $S_0^\infty$ as in \sectionref{sec:infinite_example_one}.}
Note that $\bar{S}_0^\infty$ is an outer approximation of $S^\infty_0$ for any value of $\gamma$.
\begin{figure}
\label{fig:inf_nonlinear}
\centering
\includegraphics[width=.5\textwidth]{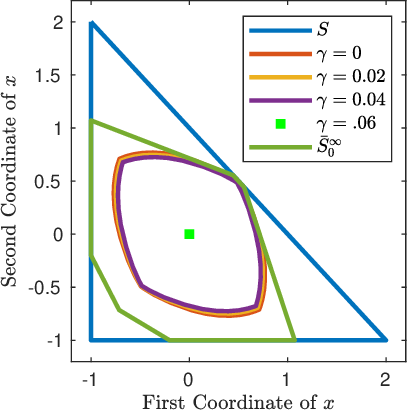}
\caption{The numerical example in Section~\ref{sec:nonlinear_example}: the safety set $S$, the sets $\tilde{S}_{0,4}^{\infty}$ for four different values of $\gamma$, and the set $\bar{S}^\infty_0$, which is an outer approximation to $S^\infty_0$ for any value of $\gamma$.}
\end{figure}

For $\gamma = 0.06$, our semidefinite program is infeasible and therefore we can only certify that the origin is infinite-step safe.
This is intended behavior since for $\gamma=0.06$, the true infinite-step safe set is just the origin.
To see why, observe that if $\trueA = \begin{bmatrix}
 0.5 &  0.45 \\
 0.45 & 0.5 
\end{bmatrix} \in U_{0,A},$ and $\trueg(x) = 0.055*x \in U_{0,g}$,
then we have $\truef(x) = \begin{bmatrix}
 0.555 &  0.45 \\
 0.45 & 0.555 
\end{bmatrix} x$ which is unstable since $\rho \left( \begin{bmatrix}
 0.555 &  0.45 \\
 0.45 & 0.555 
\end{bmatrix} \right) > 1$.
This means that the true infinite-step safe set is not full-dimensional (see Preposition~\ref{prop:U0bounded.MarginallyStable}).
By slightly perturbing $\trueg$ within $U_{0,g}$, we can obtain another valid unstable linear map $\hat{f}$ whose lower-dimensional stable subspace is different than that of $\truef$. Therefore, when $\gamma=0.06,$ the true infinite-step safe set is indeed just the origin.

For $\gamma =0.02$ or $0.04$ for example, and for any nonlinear system of the type \eqref{eq:nonlinear_model_inf}, with $\trueA \in~U_{0,A}$ and $\trueg \in U_{0,g}$, we can choose initialization points within our full-dimensional sets $\tilde{S}_{0,4}^{\infty}$ and safely observe their trajectories.
Having safely collected trajectory data, following the same exact procedure as in~\sectionref{sec:nonlinear example}, we can narrow the uncertainty on the linear part of the dynamics and use semidefinite programming to fit a polynomial map to the nonlinear part of the dynamics in such a way that the information in $U_{0,g}$ is respected and the error on the observations is minimized.}
\versionii{\section{Safe Learning with Specialized Side Information}
\label{sec:side info}
In previous sections, the initial information we assumed on the matrix $\trueA\in\Rnn$ governing the linear part of an unknown dynamical system was in terms of membership to an initial uncertainty set $U_0\subset  \Rnn$ which took the form of a polyhedron or an ellipsoid. Such uncertainty sets already capture natural side information such as being close to a nominal matrix in $1,2,\infty$ norm, having a banded structure, or being sparse with a known support. In this section, we give three examples of more specialized side information for which we can still \emph{exactly} characterize the $T$-step safe set of a linear system for $T=1$ (or higher in special cases) as the feasible set of a tractable conic program. Extensions of this research direction to other types of side information, different time horizons, and nonlinear systems is left for future research.

%matrix structure, like being banded, having a known sparsity pattern, or being close to a nominal matrix, in this section we examine some other types of structure that give rise to exact characterizations of $T$-step safe sets.

Throughout this section, we work with a polyhedral safety region $S \subset \R^n$ given in the form of \eqref{eq:S polyhedron}, and for simplicity, a linear objective function $c^\transpose  x$ representing initialization cost.
Our goal is to provide a tractable reformulation of the following optimization problem
\begin{equation}
\label{eq:structure problem}
\begin{aligned}
\min_{x \in \Rn} \quad & c^\transpose  x\\
\textrm{s.t.} \quad & x \in S \\
& Ax \in S \quad \forall A\in U_0,
\end{aligned}
\end{equation}
for three different classes of sets $U_0$.

\subsection{Sparse Matrices with Unknown Support}
Suppose that we know that the matrix $\trueA$ governing the linear dynamics in \eqref{eq:linear dynamics} has bounded entries of which only a limited number are nonzero.
We can then represent our initial uncertainty set as
\begin{equation}\label{eq:sparse U0}
U_0 = \{A \in \Rnn \mid \|A\|_0 \leq k, \|A\|_\infty \leq M \},
\end{equation}
where $k \in \mathbb{N}$ and $M \geq 0$ are given constants and $\|A\|_0$ (resp. $\|A\|_\infty$) denotes the number of nonzeros (resp. the largest entry in absolute value) of the matrix $A$.\footnote{Note that merely assuming that $\|A\|_0 \leq k$ cannot lead to safe learning. Indeed, if the safety region $S$ is compact, no nonzero point can be one-step safe with regards to this information even when $k$=1.}
In the following theorem, we establish that problem \eqref{eq:structure problem} has an exact linear programming reformulation.
We introduce auxiliary variables $\eta^{+(i)},\eta^{-(i)},\beta^{(i)} \in \Rnn$, and $\alpha^{(i)} \in \R$ for $i=1,\dots,r$.
\begin{theorem}\label{thm:sparse theorem}
The feasible set of problem \eqref{eq:structure problem} with $U_0$ as in \eqref{eq:sparse U0} is the projection to $x$-space of the feasible set of the following linear program:
\begin{equation}\label{eq:sparse LP}
\begin{aligned}
\min_{x,\eta^{+(i)},\eta^{-(i)},\beta^{(i)},\alpha^{(i)}} \quad & c^\transpose  x\\
\textrm{s.t.} \quad & h_i^\transpose  x \leq b_i \quad i = 1, \dots ,r \\
& M \Tr(J\beta^{(i)}) + Mk\alpha^{(i)} \leq b_i \quad i = 1, \dots ,r\\
& xh_i^\transpose = \eta^{+(i)} - \eta^{-(i)} \quad i = 1, \dots ,r\\
& \eta^{+(i)} + \eta^{-(i)} = \beta^{(i)} + \alpha^{(i)} J \quad i = 1, \dots ,r\\
& \eta^{+(i)},\eta^{-(i)},\beta^{(i)} \geq 0, \: \alpha^{(i)} \geq 0\quad i=1,\dots,r,
\end{aligned}
\end{equation}
where $J \in \Rnn$ is the matrix of all ones.
In particular, the optimal values of \eqref{eq:structure problem} and \eqref{eq:sparse LP} are the same and the optimal solutions of \eqref{eq:structure problem} are the optimal solutions of \eqref{eq:sparse LP} projected to $x$-space.
\end{theorem}
Before we prove this theorem, we characterize the convex hull of the set $U_0$ with the following standard lemma.
We recall our notation $\text{conv}(\Omega)$ to denote the convex hull of a set $\Omega \subseteq \Rn$.
\begin{lemma}\label{lem:sparse hull}
For all $n,k \in \mathbb{N}$ and all $M \geq 0$, we have:
\[
\textrm{\textup{conv}}(\{x \in \Rn \mid \|x\|_0 \leq k, \|x\|_\infty \leq M \}) = \{x \in \Rn \mid \|x\|_1 \leq Mk, \|x\|_\infty \leq M \}.
\]
\end{lemma}
\begin{proof}[Proof of \theoremref{thm:sparse theorem}]
We first write \eqref{eq:structure problem} as the bilevel program:
\begin{equation}\label{eq:sparse bilevel}
\begin{aligned}
\min_{x} \quad & c^\transpose  x\\
\textrm{s.t.} \quad & h_i^\transpose  x \leq b_i \quad i = 1, \dots ,r \\
& \begin{bmatrix} \max_A \quad & h_i^\transpose  A x  \\ 
\textrm{s.t.} \quad & A \in U_0 \end{bmatrix} \leq b_i \quad i = 1, \dots ,r.
\end{aligned}
\end{equation}
Observe that in the inner problems, the objective function $A \mapsto h_i^\transpose A x$ is a linear function of the variable $A$.
From this and \lemmaref{lem:sparse hull}, we have
\[
\begin{bmatrix} \max_A \quad & h_i^\transpose  A x  \\ 
\textrm{s.t.} \quad & A \in U_0 \end{bmatrix} 
= \begin{bmatrix} \max_A \quad & h_i^\transpose  A x  \\ 
\textrm{s.t.} \quad & A \in \text{conv}(U_0) \end{bmatrix}
= \begin{bmatrix} \max_A \quad & h_i^\transpose  A x  \\ 
\textrm{s.t.} \quad & \|A\|_1 \leq Mk \\
& \|A\|_\infty \leq M \end{bmatrix}.
\]
Introducing a new variable $\bar{A}\in \Rnn$, we rewrite this latter problem as a linear program:
\[
\begin{bmatrix} \max_A \quad & h_i^\transpose  A x  \\ 
\textrm{s.t.} \quad & \|A\|_1 \leq Mk \\
& \|A\|_\infty \leq M \end{bmatrix}
=\begin{bmatrix} \max_{A,\bar{A}} \quad & h_i^\transpose  A x  \\ 
\textrm{s.t.} \quad & -\bar{A} \leq A \leq \bar{A}\\
& \Tr(J\bar{A}) \leq Mk \\
& \bar{A} \leq MJ \end{bmatrix}.
\]
We proceed by taking the dual of the inner problems, treating the $x$ variable as fixed.
By introducing dual variables $\eta^{+(i)},\eta^{-(i)},\beta^{(i)} \in \Rnn$, and $\alpha^{(i)} \in \R$ for $i=1,\dots,r$, and by invoking strong duality of linear programming, we have
\begin{equation}\label{eq:sparse duality}
\begin{bmatrix} \max_{A,\bar{A}} \quad & h_i^\transpose  A x  \\ 
\textrm{s.t.} \quad & -\bar{A} \leq A \leq \bar{A}\\
& \Tr(J\bar{A}) \leq Mk \\
& \bar{A} \leq MJ \end{bmatrix}
= \begin{bmatrix} \min\limits_{\eta^{+(i)},\eta^{-(i)},\beta^{(i)},\alpha^{(i)}} \quad & M \Tr(J\beta^{(i)}) + Mk\alpha^{(i)}  \\ 
\textrm{s.t.} \quad & xh_i^\transpose = \eta^{+(i)} - \eta^{-(i)}\\
& \eta^{+(i)} + \eta^{-(i)} = \beta^{(i)} + \alpha^{(i)} J \\
& \eta^{+(i)},\eta^{-(i)},\beta^{(i)} \geq 0, \: \alpha^{(i)} \geq 0 \\
\end{bmatrix}
\end{equation}
for $i=1,\dots,r$.
Thus by replacing the inner problems of \eqref{eq:sparse bilevel} with the right-hand side of \eqref{eq:sparse duality}, the min-max problem \eqref{eq:sparse bilevel} becomes a min-min problem.
This min-min problem can be combined into a single minimization problem and be written as problem \eqref{eq:sparse LP}.
Indeed, if $x$ is feasible to \eqref{eq:sparse bilevel}, for that fixed $x$ and for each $i$, there exist values of $\eta^{+(i)},\eta^{-(i)},\beta^{(i)},\alpha^{(i)}$ that attain the optimal value for \eqref{eq:sparse duality} and therefore the tuple $(x,\eta^{+(i)},\eta^{-(i)},\beta^{(i)},\alpha^{(i)})$ will be feasible to \eqref{eq:sparse LP}.
Conversely, if some $(x,\eta^{+(i)},\eta^{-(i)},\beta^{(i)},\alpha^{(i)})$ is feasible to \eqref{eq:sparse LP}, it follows that $x$ is feasible to \eqref{eq:sparse bilevel}. This is because for any fixed $x$ and for each $i$, the optimal value of the left-hand side of \eqref{eq:sparse duality} is bounded from above by the objective value of the right-hand side evaluated at any feasible $(x,\eta^{+(i)},\eta^{-(i)},\beta^{(i)},\alpha^{(i)})$.
\end{proof}
\subsection{Low-Rank Matrices}
Suppose that we know that the matrix $\trueA$ governing the linear dynamics in \eqref{eq:linear dynamics} has bounded spectral norm and is low rank.
We can then write the initial uncertainty set as
\begin{equation}\label{eq:lowrank U0}
U_0 = \{A \in \Rnn \mid \text{rk}(A) \leq k, \|A\| \leq M \},
\end{equation}
where $k \in \mathbb{N}$ and $M \geq 0$ are given and $\text{rk}(A)$ (resp. $\|A\|$) denotes the rank (resp. spectral norm) of the matrix $A$.\footnote{Note that merely assuming that $A$ is low rank cannot lead to safe learning. Indeed, for any two vectors, $x,y \in \Rn$, with $x\neq 0$, the rank-one matrix $\frac{yx^\transpose}{\|x\|^2}$ takes $x$ to $y$; thus, a rank-one matrix can take any nonzero point to an unsafe point in just one step.}
% Then the problem of finding the cheapest one-step safe initialization point (i.e. the version of \eqref{eq:greedy_safe} for this specific case) becomes:
% \begin{equation}\label{eq:structure problem}
% \begin{aligned}
% \min_{x \in \Rn} \quad & c^\transpose  x\\
% \textrm{s.t.} \quad & x \in S \\
% & Ax \in S \quad \forall A\in U_0.
% \end{aligned}
% \end{equation}
In the following theorem, we establish that problem \eqref{eq:structure problem} has an exact semidefinite programming reformulation.
We introduce auxiliary variables $\eta^{(i)}_1, \eta^{(i)}_3 \in \Snn$, $\eta^{(i)}_2 \in \Rnn$ and $\alpha^{(i)} \in \R$ for $i=1,\dots,r$.
\begin{theorem}\label{thm: lowrank theorem}
The feasible set of problem \eqref{eq:structure problem} with $U_0$ as in \eqref{eq:lowrank U0} is the projection to $x$-space of the feasible set of the following semidefinite program:
\begin{equation}\label{eq:lowrank sdp}
\begin{aligned}
\min_{x,\eta^{(i)}_1,\eta^{(i)}_2,\eta^{(i)}_3,\alpha^{(i)}} \quad & c^\transpose  x\\
\textrm{s.t.} \quad & h_i^\transpose  x \leq b_i \quad i = 1, \dots ,r \\
& M\Tr(\eta^{(i)}_1) + \Tr(\eta_3^{(i)}) + \alpha^{(i)} Mk \leq b_i \quad i = 1, \dots ,r\\
& \begin{bmatrix}
    \alpha^{(i)} I & 2 \eta_2^{(i)} + xh_i^\transpose  \\
    2\eta_2^{(i)\transpose} + h_ix^\transpose & \alpha^{(i)} I
\end{bmatrix} \succeq 0 \quad i = 1, \dots ,r\\
& \begin{bmatrix}
    \eta_1^{(i)} & \eta_2^{(i)} \\
    \eta_2^{(i)\transpose} & \eta_3^{(i)}
\end{bmatrix} \succeq 0 \quad i = 1, \dots ,r,
\end{aligned}
\end{equation}
where $I$ denotes the $\nn$ identity matrix.
In particular, the optimal values of \eqref{eq:structure problem} and \eqref{eq:lowrank sdp} are the same and the optimal solutions of \eqref{eq:structure problem} are the optimal solutions of \eqref{eq:lowrank sdp} projected to $x$-space.
\end{theorem}
Before we prove this theorem, we recall a result that characterizes the convex hull of $U_0$.
We use the notation $\|A\|_*$ to denote the nuclear norm of the matrix $A$, i.e., the sum of its singular values.
\begin{lemma}[\cite{Hiriart-Urruty2012}]
\label{lem:lowrank hull}
For all $n,k \in \mathbb{N}$ and all $M \geq 0$, we have:
\[
\text{\textup{conv}}(\{A \in \Rnn \mid \text{\textup{rk}}(A) \leq k, \|A\| \leq M \}) = \{A \in \Rnn \mid \|A\|_* \leq Mk, \|A\| \leq M \}.
\]
\end{lemma}
\begin{proof}[Proof of \theoremref{thm: lowrank theorem}]
We first write \eqref{eq:structure problem} as the bilevel program:
\begin{equation}\label{eq:lowrank bilevel}
\begin{aligned}
\min_{x} \quad & c^\transpose  x\\
\textrm{s.t.} \quad & h_i^\transpose  x \leq b_i \quad i = 1, \dots ,r \\
& \begin{bmatrix} \max_A \quad & h_i^\transpose  A x  \\ 
\textrm{s.t.} \quad & A \in U_0 \end{bmatrix} \leq b_i \quad i = 1, \dots ,r.
\end{aligned}
\end{equation}
Observe that in the inner problems, the objective function $A \mapsto h_i^\transpose A x$ is a linear function of the variable $A$.
From this and \lemmaref{lem:lowrank hull}, we have
\[
\begin{bmatrix} \max_A \quad & h_i^\transpose  A x  \\ 
\textrm{s.t.} \quad & A \in U_0 \end{bmatrix} 
= \begin{bmatrix} \max_A \quad & h_i^\transpose  A x  \\ 
\textrm{s.t.} \quad & A \in \text{conv}(U_0) \end{bmatrix}
= \begin{bmatrix} \max_A \quad & h_i^\transpose  A x  \\ 
\textrm{s.t.} \quad & \|A\|_* \leq Mk \\
& \|A\| \leq M \end{bmatrix}.
\]
Introducing new variables $W_1,W_2 \in \Snn$, we rewrite this latter problem as a semidefinite program:
\[
\begin{bmatrix} \max_A \quad & h_i^\transpose  A x  \\ 
\textrm{s.t.} \quad & \|A\|_* \leq Mk \\
& \|A\| \leq M \end{bmatrix}
=\begin{bmatrix} \max\limits_{A,W_1,W_2} \quad & h_i^\transpose  A x  \\ 
\textrm{s.t.} \quad & \begin{bmatrix}
    W_1 & A \\
    A^\transpose & W_2
\end{bmatrix} \succeq 0 \\
& \frac{1}{2} (\Tr(W_1) + \Tr(W_2)) \leq Mk \\
&\begin{bmatrix}
    MI & A \\
    A^\transpose & I
\end{bmatrix} \succeq 0 \end{bmatrix}.
\]
We proceed by taking the dual of the inner problems, treating the $x$ variable as fixed.
By introducing dual variables $\eta^{(i)}_1, \eta^{(i)}_3 \in \Snn$, $\eta^{(i)}_2 \in \Rnn$, and $\alpha^{(i)} \in \R$ for $i=1,\dots,r$, we claim that
\begin{equation}\label{eq:lowrank duality}
\begin{bmatrix} \max\limits_{A,W_1,W_2}   & h_i^\transpose  A x  \\ 
\textrm{s.t.}   & \begin{bmatrix}
    W_1 & A \\
    A^\transpose & W_2
\end{bmatrix} \succeq 0 \\
& \frac{1}{2} (\Tr(W_1) + \Tr(W_2)) \leq Mk \\
&\begin{bmatrix}
    MI & A \\
    A^\transpose & I
\end{bmatrix} \succeq 0 \end{bmatrix}
= \begin{bmatrix} \min\limits_{\eta_1^{(i)},\eta_2^{(i)},\eta_3^{(i)},\alpha^{(i)}}   & M\Tr(\eta^{(i)}_1) + \Tr(\eta_3^{(i)}) + \alpha^{(i)} Mk  \\ 
\textrm{s.t.}   & \begin{bmatrix}
    \alpha^{(i)} I & 2 \eta_2^{(i)} + xh_i^\transpose  \\
    2\eta_2^{(i)\transpose} + h_ix^\transpose & \alpha^{(i)} I
\end{bmatrix} \succeq 0 \\
&\begin{bmatrix}
    \eta_1^{(i)} & \eta_2^{(i)} \\
    \eta_2^{(i)\transpose} & \eta_3^{(i)}
\end{bmatrix} \succeq 0 \end{bmatrix}
\end{equation}
for $i=1,\dots,r$.
By taking $A$ to be the zero matrix and $W_1$ and $W_2$ to be small enough positive multiples of the identity matrix, we observe that the problem on the left hand side of \eqref{eq:lowrank duality} is strictly feasible.
Similarly, by taking $\eta_2^{(i)}$ to be the zero matrix, $\eta_1^{(i)}$ and $\eta_3^{(i)}$ to be identity matrices, and $\alpha^{(i)}$ sufficiently large, we observe that the problem on the right hand side of \eqref{eq:lowrank duality} is strictly feasible.
Thus, the equality in \eqref{eq:lowrank duality} follows from strong duality of semidefinite programming (see, e.g., \cite[Theorem~6.3.4]{Lovasz2003}).
Thus, by replacing the inner problems of \eqref{eq:lowrank bilevel} with the right-hand side of \eqref{eq:lowrank duality}, the min-max problem \eqref{eq:lowrank bilevel} becomes a min-min problem.
This min-min problem can be combined into a single minimization problem and be written as problem \eqref{eq:lowrank sdp}.
Indeed, if $x$ is feasible to \eqref{eq:lowrank bilevel}, for that fixed $x$ and for each $i$, there exist values of $\eta^{(i)}_1,\eta^{(i)}_2,\eta^{(i)}_3,\alpha^{(i)}$ that attain the optimal value for \eqref{eq:lowrank duality} and therefore the tuple $(x,\eta^{(i)}_1,\eta^{(i)}_2,\eta^{(i)}_3,\alpha^{(i)})$ will be feasible to \eqref{eq:lowrank sdp}.
Conversely, if some $(x,\eta^{(i)}_1,\eta^{(i)}_2,\eta^{(i)}_3,\alpha^{(i)})$ is feasible to \eqref{eq:lowrank sdp}, it follows that $x$ is feasible to \eqref{eq:lowrank bilevel}. This is because for any fixed $x$ and for each $i$, the optimal value of the left-hand side of \eqref{eq:lowrank duality} is bounded from above by the objective value of the right-hand side evaluated at any feasible $(x,\eta^{(i)}_1,\eta^{(i)}_2,\eta^{(i)}_3,\alpha^{(i)})$.
\end{proof}

\begin{remark}
In the special case when $M\leq1$, the projection to $x$-space of the feasible set of the semidefinite program in \eqref{eq:lowrank sdp} is not only an exact characterization of the one-step safe set, but also of the $T$-step safe set for any $T$ (including $T=\infty$).
This follows from the fact that \[
A \in U_0 \Rightarrow A^t \in U_0 \quad \forall t,
\]
as the spectral norm is submultiplicative and $\text{\textup{rk}}(A^t) \leq \text{\textup{rk}}(A)$.
\end{remark}

\subsection{Permutation Matrices}
Suppose that we know that the matrix $\trueA$ governing the linear dynamics in \eqref{eq:linear dynamics} acts on a vector by permuting its entries.
In other words,
\begin{equation}\label{eq:permute U0}
U_0 = \{ A \in \Rnn \mid A\text{ is a permutation matrix} \},
\end{equation}
where we recall that a permutation matrix is a binary square matrix with each row and each column containing exactly one nonzero entry.
% Then the problem of finding the cheapest one-step safe initialization point (i.e. the version of \eqref{eq:greedy_safe} for this specific case) becomes:
% \begin{equation}\label{eq:structure problem}
% \begin{aligned}
% \min_{x \in \Rn} \quad & c^\transpose  x\\
% \textrm{s.t.} \quad & x \in S \\
% & Ax \in S \quad \forall A\in U_0.
% \end{aligned}
% \end{equation}
While there are $n!$ matrices in $U_0$, the following theorem establishes that problem \eqref{eq:structure problem} has an exact reformulation as a linear problem of polynomial size.
The proof of this theorem invokes the fact that $\text{conv}(U_0)$ is the set of $\nn$ doubly stochastic matrices \citep{Birkhoff46}.
We introduce auxiliary variables $u^{(i)},w^{(i)} \in \Rn$ for $i=1,\dots,r$.
\begin{theorem}[follows from Theorem 3.8 of \cite{ahmadi18rdo}]
The feasible set of problem \eqref{eq:structure problem} with $U_0$ as in \eqref{eq:permute U0} is the projection to $x$-space of the feasible set of the following linear program:
\begin{equation}\label{eq:permute LP}
\begin{aligned}
\min_{x,u^{(i)},w^{(i)}} \quad & c^\transpose  x\\
\textrm{s.t.} \quad &\mathbf{1}^\transpose u^{(i)} + \mathbf{1}^\transpose w^{(i)} \leq b_i \quad i = 1, \dots ,r\\
& u^{(i)}\mathbf{1}^\transpose + \mathbf{1}w^{(i)\transpose} \geq xh_i^\transpose \quad i = 1, \dots ,r,
\end{aligned}
\end{equation}
where $\mathbf{1}$ denotes the $n$-dimensional vector of all ones.
In particular, the optimal values of \eqref{eq:structure problem} and \eqref{eq:permute LP} are the same and the optimal solutions of \eqref{eq:structure problem} are the optimal solutions of \eqref{eq:permute LP} projected to $x$-space.
\end{theorem}

\begin{remark}
The projection to $x$-space of the feasible set \eqref{eq:permute LP} is not only an exact characterization of the one-step safe set, but also for the $T$-step safe set for any $T$ (including $T=\infty$).
This follows from the fact that \[
A \in U_0 \Rightarrow A^t \in U_0 \quad \forall t
\]
since the permutation matrices form a group closed under matrix multiplication.
\end{remark}
% We note that this same set is also the $T$-step safe set with respect to the uncertainty set
% \[
% U_0 = \text{conv}(\{ A \in \Rnn \mid A\text{ is a permutation matrix} \})
% \]
% by the same logic.

We remark that more generally, whenever a tractable conic programming based description of $\text{conv}(U_0)$ is available, one can invoke conic programming strong duality theory to get a tractable characterization of the one-step safe set.
}
\versionii{\section{Controlled Safe Learning}
\label{sec: controlled}
In this section, we extend our mathematical framework for safe learning to a setting where in addition to choosing the initialization point to the dynamics, we can also choose a control input. We present generalizations of our previous results to the case of linear control affine dynamics and time horizon $T=1$. Extensions to other settings is left for future work.

%show how our mathematical framework could be extended to systems that involve control input. We define a notion of \emph{controlled safe learning} and extend our main algorithmic and learning results to this case for the case of linear control control affine dynamics and time horizon $T$ equal to 1.

%In many applications, one may wish to consider a controlled version of safe learning. In this section we consider the problem of controlled safe learning for linear control affine dynamics and we show how our theory extends in the one-step case. Just as in the uncontrolled case, $T$-step controlled safe learning is possible for any $T$ if and only if it is possible for $T=1$.
Consider the linear control affine dynamical system
\[
x_{t+1} = \trueA x_t + B_\star u_t,
\]
defined by the matrices $\trueA \in \Rnn$ and $B_\star \in \R^{n \times \bar{n}}$, where $x_t \in \Rn$ (resp. $u_t \in \R^{\bar{n}}$) is the state (resp. control input) at time $t$.
Suppose we have a safety region in $x$-space again called $S \subset \Rn$ and defined as the polyhedron in \eqref{eq:S polyhedron}. In addition, suppose we have a set of admissible controls $C \subseteq \R^{\bar{n}}$ defined as
\[
C = \{ u \in \R^{\bar{n}} \mid \bar{h}_i^\transpose u \leq \bar{b}_i \quad i=1,\dots,\bar{r} \}.
\]
The matrices $\trueA$ and $B_\star$ are unknown, but respectively belong to initial uncertainty sets $U_{0,A} \subset~\Rnn$ and $U_{0,B} \subset \R^{n \times \bar{n}}$. Let us again assume a polyhedral form for these sets:
%Again let us take an uncertainty set of $U_{0,A} \subset \Rnn$ that we assume contains $\trueA$ and let us also take an uncertainty $U_{0,B} \subset \R^{n \times \bar{n}}$ that we assume contains $B_\star$.
%Let us define 
\begin{align*}
U_{0,A} &=  \{ A \in \R^{n \times n} \mid  \Tr(V_j^\transpose  A) \leq v_j \quad j = 1, \dots ,s  \}  \\
U_{0,B} &= \{ B \in \R^{n \times \bar{n}} \mid \Tr(\bar{V}_j^\transpose  B) \leq \bar{v}_j \quad j = 1, \dots ,\bar{s} \},
\end{align*}
where $V_1,\dots,V_s \in \R^{n \times n}$, $\bar{V}_1,\dots,\bar{V}_{\bar{s}} \in \R^{n \times \bar{n}}$, and $v_1,\dots,v_s,\bar{v}_1,\dots,\bar{v}_{\bar{s}} \in \R$ are given.
Suppose that we have collected $k$ measurements from the true system in the form of tuples $(x_\ell,u_\ell,y_\ell)$ such that $y_\ell = \trueA x_\ell + B_\star u_\ell$ for $\ell = 1,\dots,k$.
Our updated uncertainty set $U_k$ is then the set of pairs of matrices which agree with our initial information and our $k$ measurements; i.e.,
\[
U_k = \left \{ (A,B) \in \Rnn \times \R^{n \times \bar{n}} \middle \vert A \in U_{0,A}, B\in U_{0,B}, y_\ell = A x_\ell + B u_\ell \quad \ell = 1,\dots,k \right \}.
\]
We may also define $U_{k,A}$ (resp. $U_{k,B}$) to be the projection to $A$-space (resp. $B$-space) of $U_k$. With this updated information, we can then define the \emph{one-step controlled safe set} as
\[
CS^1_k = \{x \in \Rn \mid \exists u\in C \text{ s.t. } \forall (A,B) \in U_k, Ax + Bu \in S\}.
\]
Now we can formalize what it means to safely learn in the controlled case, analogously to Definition~\ref{def:one-step}.
\begin{definition}[One-Step Controlled Safe Learning]\label{def: controlled safe learning}
We say that one-step controlled safe learning is possible if for some nonnegative integer $m$, we can sequentially choose vectors $x_k \in S$ and $u_k \in C$, for $k=1, \ldots, m$, and observe measurements $y_k = \trueA x_k + B_\star u_k$ such that:
\begin{enumerate}
    \item \textbf{(Safety)} for $k=1,\dots,m$, we have $A x_k + B u_k \in S \quad \forall (A,B) \in U_{k-1}$,
    \item \textbf{(Learning)} the sets of matrices $U_{m,A}$ and $U_{m,B}$ are singletons.
\end{enumerate}
\end{definition}
Note that by the safety requirement, each chosen initialization point $x_k$ must lie in $CS^1_{k-1}$. One can define $T$-step controlled safe learning analogously. Just as in the autonomous case, we note that controlled safe learning is possible for some $T$ if and only if it is possible for $T$=1. Therefore, the reader can note that our Corollary~\ref{cor:one-step controlled} below provides an efficient algorithm for checking the possibility of controlled safe learning.

We model initialization (resp. control) cost for simplicity with a linear function $c^\transpose x$ (resp. $\bar{c}^\transpose u$) for some given vector $c \in \Rn$ (resp. $\bar{c} \in \R^{\bar{n}}$).
The following problem finds the next cheapest pair of initialization point and control which ensure one-step safety:
\begin{equation}\label{eq:one-step controlled}
\begin{aligned}
\min_{x\in\R^n, u \in \R^{\bar{n}}} \quad & c^\transpose  x + \bar{c}^\transpose u\\
\textrm{s.t.} \quad & x \in S \\
& u \in C \\
& A x + Bu \in S \quad \forall (A,B) \in U_k. 
\end{aligned}
\end{equation}

Similarly as in Proposition~\ref{prop:one-step LP}, we can show that the feasible region of \eqref{eq:one-step controlled} is the projection to $(x,u)$-space of the feasible region of the following linear program, where we have added auxiliary variables $\mu_j^{(i)} \in \R$ for $ i=1,\dots,r$, $j=1,\dots,s$, and $\bar{\mu}_j^{(i)} \in \R$ for $ i=1,\dots,\bar{r}$, $j=1,\dots,\bar{s}$, and $\eta_\ell^{(i)} \in \R^n$ for $i=1,\dots,r$, $\ell=1,\dots,k$.

\begin{proposition}
The feasible set of problem \eqref{eq:one-step controlled} is the projection to $(x,u)$-space of the feasible set of the following linear program:
\begin{equation}\label{eq:one-step LP controlled}
\begin{aligned}
\min_{x,u,\mu,\bar{\mu},\eta} \quad & c^\transpose  x + \bar{c}^\transpose u\\
\textrm{s.t.} \quad & h_i^\transpose  x \leq b_i \quad i = 1, \dots ,r \\
& \bar{h}_i^\transpose  u \leq \bar{b}_i \quad i = 1, \dots ,\bar{r} \\
& \sum_{\ell=1}^{k} y_\ell^\transpose  \eta_\ell^{(i)} + \sum_{j=1}^{s} \mu_j^{(i)} v_j + \sum_{j=1}^{\bar{s}} \bar{\mu}_j^{(i)} \bar{v}_j \leq b_i \quad i = 1, \dots ,r \\
& x h_i^\transpose  = \sum_{\ell=1}^{k} x_\ell \eta_\ell^{(i)\transpose } + \sum_{j=1}^{s} \mu_j^{(i)} V_j^\transpose  \quad i = 1, \dots ,r \\
& u h_i^\transpose = \sum_{\ell=1}^{k} u_\ell \eta_\ell^{(i)\transpose } + \sum_{j=1}^{\bar{s}} \bar{\mu}_j^{(i)} \bar{V}_j^\transpose  \quad i = 1, \dots ,r \\
& \mu^{(i)} \geq 0 \quad i = 1, \dots ,r \\
& \bar{\mu}^{(i)} \geq 0 \quad i = 1, \dots ,r.
\end{aligned}
\end{equation}
% We say that these two programs are \emph{equivalent} if they have the same optimal value, if for each $x$ feasible to \eqref{eq:one-step}, there exists values of $\mu$ and $\eta$ such that $(x,\mu,\eta)$ is feasible to \eqref{eq:one-step LP}, and for all $(x,\mu,\eta)$ feasible to \eqref{eq:one-step LP}, $x$ is feasible to \eqref{eq:one-step}.
In particular, the optimal values of \eqref{eq:one-step controlled} and \eqref{eq:one-step LP controlled} are the same and the optimal solutions of \eqref{eq:one-step controlled} are the optimal solutions of \eqref{eq:one-step LP controlled} projected to $(x,u)$-space.
\end{proposition}

We omit the proof of this proposition as it is very similar to the proof of Proposition~\ref{prop:one-step LP}, with its main ingredient being strong duality of linear programming. Our final two corollaries give the controlled analogues of Theorem~\ref{thm:one-step} and Corollary~\ref{cor:n steps}. The proofs are similar and hence omitted.

\begin{corollary}\label{cor:one-step controlled}
Given a safety region $S \subset \R^n$, a set of admissible controls $C \subseteq \R^{\bar{n}},$ and uncertainty sets $U_{0,A} \subset \R^{n \times n}$ and $U_{0,B} \subset \R^{n \times \bar{n}}$, one-step controlled safe learning (see Definition~\ref{def: controlled safe learning}) is possible if and only if \algorithmref{alg:one-step} with the input tuple
\[
\left(S \times C, \left \{ \begin{bmatrix}
    A & B \\
    0 & I
\end{bmatrix} \in \R^{(n+\bar{n}) \times (n + \bar{n})} \middle \vert A \in U_{0,A}, B \in U_{0,B} \right \}, c,\varepsilon \right)
\]
(with an arbitrary choice of $c \in \R^{n+\bar{n}}$ and $\varepsilon \in (0, 1]$) returns a matrix.

\end{corollary}
% \begin{proof}
% This follows from Theorem~\ref{thm:one-step} since in this case the algorithm will pick vectors in $\R^{n + \bar{n}}$ which can be interpreted as picking both $x_k$ and $u_k$.
% Due to the structure of the input tuple, $x_k,u_k$ can be picked by the algorithm if and only if $A x_k + B u_k \in S \quad \forall (A,B) \in U_{k-1}$.
% It follows that \algorithmref{alg:one-step} will return a matrix if and only there is an $m$ such that $U_m$ is a singleton which is equivalent to $U_{m,A}$ and $U_{m,B}$ being singletons.
% \end{proof}

\begin{corollary}
Given a safety region $S \subset \R^n$, a set of admissable controls $C \subseteq \R^{\bar{n}},$ and uncertainty sets $U_{0,A} \subset \R^{n \times n}$ and $U_{0,B} \subset \R^{n \times \bar{n}}$, if one-step controlled safe learning is possible, then it is possible with at most $n + \bar{n}$ measurements. 
\end{corollary}}
\newnewstuff{\section{Future Research Directions}
\label{sec:conclusion}

\versionii{%
Besides extending the results of this paper to time horizons $T$ other than $1,2,\infty$, the results of \sectionref{sec:side info} to other types of side information, and the results of Section~\ref{sec: controlled} on controlled safe learning beyond the one-step linear control affine case, we list some potential directions for future research below:}
%We conclude with a discussion of potential directions for future research:
\begin{itemize}
% \item While we considered safely learning autonomous systems, a natural direction is to extend our model and techniques to controlled systems as well.
% Specifically, in our models, how would our conic optimization problems have to change if the goal is to select a sequence of admissible control inputs that simultaneously maintains the safety of the system while providing the necessary excitation to reduce uncertainty?
% One can show that in the case of linear systems, where $x_{t+1} = Ax_t + Bu_t$ with given initial uncertainty sets for matrices $A$ and $B$ and given safety regions for both the state $x$ and the control $u$, the problem of one-step safe learning of the dynamics reduces to the uncontrolled setting studied in Section~\ref{sec:one-step} by state augmentation. Extensions of the other settings considered in this paper are left for future work.

% \versionii{\item We considered the special cases of time horizons $T=1,2,\infty,$ for which we could establish tractable representations or full-dimensional inner approximations of $T$-step safe sets.
% For which other time horizons can we find such tractable representations?
% Are there cases for which the question of whether a point is $T$-step safe becomes hard in a complexity sense (NP-hard, undecideable, etc.)?}

\versionii{\item We addressed the settings of noisy measurements for time horizon $T=1,\infty,$ and disturbances in the dynamics for $T=1$.
Can we extend the treatment of noisy measurements to $T=2$?
Can we extend the treatment of disturbed dynamics to $T=2,\infty$? It would also be interesting to consider distributional assumptions on noise or disturbances and devise a statistical analysis of the resulting safe learning problem.}

% Can we in general, extend our results to different models of noise or disturbances?
% For example, if one takes distributional assumptions on noise or disturbances, can one make a statistical analysis of the learning problem?

\item Can one bound the suboptimality of our greedy online algorithm for minimizing the cost of safe learning against the idealized minimum cost of safe learning (cf.\ the paragraph above Eq.~\eqref{eq:greedy_safe} and e.g., problem \eqref{eq:onestep lower bound})?
How would this bound depend on the input parameters 
$S$, $U_0$, $T$, and the true dynamics $\truef$?
Furthermore, since it is not clear if any possible algorithm can achieve the idealized minimum cost of safe learning for every $f_\star \in U_0$,
one could instead consider comparing 
to the best valid algorithm for safe learning 
that achieves the lowest worst-case (minimax) cost over all $f_\star \in U_0$. How would the greedy algorithm compare to this minimax optimal algorithm?

% Furthermore, how does this bound scale
% when we consider either the worst case (minimax) over
% a family of dynamics, or average case (prior over $\truef$)?

% Our approach employs the use of a greedy online algorithm in order to reduce the cost of learning.
% It is a natural question to ask if this approach achieves cost within a computable factor of the lowest possible cost.

% \item In the introduction, we defined one specific idealized minimum cost of safe learning. It is interesting to consider
% other notions of optimality

% \item 
% In this paper, we compared the cost of learning of the greedy algorithm to an idealized algorithm that has oracle access to the true dynamics $\truef$.
% It is not clear if any possible algorithm can achieve the oracle cost on every instance of $\truef$. 

% %
% Alternatively, one can measure the performance of a safe learning algorithm in a minimax setting (by considering the worst-case cost of learning over a family of ) or an average-case setting (by assuming a prior distribution over $U_0$). We leave a study of such formulation for future work.

\item In Sections \ref{sec:nonlinear} and \ref{sec:inf_nonlinear}, we studied systems which consist of a linear term plus a nonlinear term with bounded growth.
While this description is fairly general, 
further specialization to practical nonlinear systems, such as piecewise affine systems or systems parameterized with a known set of basis functions, could potentially allow one to safely recover
the nonlinear part of the dynamics.\footnote{
Under suitable growth assumptions, one could apply the results in \Cref{sec:nonlinear}
(resp.\ \Cref{sec:inf_nonlinear})
to derive inner approximations of the one (resp.\ infinite) step safe set of e.g.\ a piecewise affine system. However, by specializing the description of the system, 
it may be possible to derive less conservative inner approximations (or even exact characterizations).
}

% It would be interesting to study other families of nonlinear systems, such as piecewise affine systems.

\item In Sections \ref{sec:linear} and \ref{sec:inf_nonlinear}, we approximated infinite-step safe sets by deriving semidefinite programs whose size depend on the maximum allowed degree of certain SOS polynomials and matrices.
While we found small degrees to suffice empirically, it would be interesting to
bound, for some class of problem instances, the degree one must choose in order for the proposed approximation to the safe sets to be full-dimensional (assuming the true set is full-dimensional).

% \item In this work, we considered the problem of safe learning for deterministic discrete-time dynamical systems.
% % However, many systems of interest are more naturally expressed as differential equations in continuous-time. 
% Extending our framework and techniques to stochastic and/or continuous-time systems would broaden the scope of our work.

\versionii{\item 
While in this paper we focused on discrete-time systems, our mathematical framework for safe learning also applies to continuous-time systems. Extending our results to the continuous-time setting would broaden the scope of our work and capture problems in control theory or physics that are modelled with ordinary or partial differential equations. Unlike the discrete-time setting, where every time horizon is a different case to study, in continuous-time, there would essentially only be two cases: either the time horizon $T$ is finite or infinite. 
Another difference is that for many parametric classes of continuous-time dynamical systems such as linear or polynomial systems, an infinitesimally small noiseless observation of a single generic trajectory suffices for learning the system. Therefore, one must assume a discretized access to the trajectory for the sequential learning problem to be nontrivial. Despite these distinctions, the concepts behind our paradigm carry over to continuous time; e.g., safety sets would still grow and uncertainty sets would still shrink as more information is gathered. We suspect that in the continuous-time case, the analysis would likely be much more focused on the behavior of the system on the boundary of the safety region, since unsafe trajectories must exit the safety region through its boundary. We suspect that the literature on maximal invariant sets and peak estimation for continuous-time dynamical systems would be relevant to extending our algorithms and theory to continuous time; see, e.g.,~\cite{blanchini_invariance_1999,nagumo_1942,korda14,BELL20103625,miller2023distance} and references therein.}

% \item Finally, we only consider deterministic systems in this work.  Extending our ideas to stochastic systems, or situations with noisy measurements,
% is an exciting direction for future work.
% In this case, one might be able to preserve safety only with high probability and derive statistical rates for learning the parameters of the dynamics.

\end{itemize}
}

% Acknowledgments---Will not appear in anonymized version
\acks{
\versionii{We would like to thank two anonymous referees whose comments have greatly improved our manuscript.}
AAA and AC were partially supported by the MURI award of the AFOSR, the DARPA Young Faculty Award, the
CAREER Award of the NSF, the Google Faculty Award, the Innovation Award of the School of Engineering and Applied Sciences at Princeton University, and the Sloan Fellowship.}

\bibliography{paper,citations}

\appendix

\newstuff{\section{(Omitted Proofs)}
\label{sec:appendix}

\subsection{Proof of Lemma~\ref{lem:basis}}

\myproof{
We form the desired basis $\{e_i\}$ iteratively and with an inductive argument.
Let $e_1$ be any nonzero vector in $P$ (existence of such a vector can be checked by the argument in the proof of \lemmaref{lem:uniqueness}); if there is no such vector, we return the empty set.
Let $\{e_1,\dots,e_k\}$ be a linearly independent set in $P$.
We will either find an additional linearly independent vector $e_{k+1} \in P$, or show that the dimension of the span of $P$ is $k$.
Let $x$, $x^+$, and $x^-$ be variables in $\R^n$, $y^+$ and $y^-$ be variables in $\R^p$, and $\lambda^+$ and $\lambda^-$ be variables in $\R$.
Consider the following linear programming feasibility problem:
\begin{equation}\label{eq:one-step proof}
\begin{aligned}
& e_i^\transpose x = 0 \quad i=1,\dots,k\\
& x = x^+ - x^-\\
& Ax^+ + By^+ \leq \lambda^+ c \\
& Ax^- + By^- \leq \lambda^- c \\
& \lambda^+ \geq 0 \\
& \lambda^- \geq 0.
\end{aligned}
\end{equation}
Let $F\subseteq \R^n$ be the projection to $x$-space of the feasible region of this problem.
We claim that $F = \{0\}$ if and only if the dimension of $\spanof{P}$ is $k$.
Moreover, if there is solution to \eqref{eq:one-step proof}
with $x \neq 0$,
then there is also a solution $(x, x^{\pm}, y^{\pm}, \lambda^{\pm})$ where $\lambda^+,\lambda^- \neq 0$.
In this case, either $\frac{x^+}{\lambda^+}$ or $\frac{x^+}{\lambda^+}$ can be taken as $e_{k+1}$.

Suppose first that the dimension $\spanof{P}$ is at least $k+1$; then there is a vector $\tilde{x}\in \spanof{P}$ that is linearly independent from $\{e_1,\dots,e_k\}$.
By subtracting the projection of $\tilde{x}$ to $\spanof{\{e_1,\dots,e_k\}}$, we will find a nonzero vector $x \in \spanof{P}$ that is orthogonal to the vectors $e_1,\dots,e_k$.
We claim this vector $x$ is feasible to \eqref{eq:one-step proof} for some choice of $(x^\pm,y^\pm,\lambda^\pm)$.
Indeed, since $x \in \spanof{P}$, then
\[ x = \sum_{j=1}^r \lambda_j x_j, \]
for some vectors $x_1,\dots,x_r \in P$ and some nonzero scalars $\lambda_1,\dots,\lambda_r$.
For each $j$, as $x_j \in P$, there exists a vector $y_j$ such that $Ax_j + By_j \leq c$.
Let $J$ denote the set of indices $j$ such that $\lambda_j >0$.
It is easy to check that the assignment
\begin{equation}
\begin{aligned}
(x^+,y^+,\lambda^+) &= (\sum_{j \in J} \lambda_j x_j,\sum_{j \in J} \lambda_j y_j,\sum_{j \in J} \lambda_j) \:,\\
(x^-,y^-,\lambda^-) &= (- \sum_{j \notin J} \lambda_j x_j,- \sum_{j \notin J} \lambda_j y_j,- \sum_{j \notin J} \lambda_j)
\end{aligned} 
\end{equation}
satisfies system \eqref{eq:one-step proof}.
Hence, we have shown that if $F=\{0\}$ then the dimension of $\spanof{P}$ is $k$.
\par
To see the converse implication, suppose $x \neq 0$, and that the tuple $(x,x^\pm,y^\pm,\lambda^\pm)$ is feasible to system \eqref{eq:one-step proof}.
Without loss of generality we assume $\lambda^\pm\geq 1$; if not, we replace the tuple with
\begin{align}
    (x, x^\pm + \hat{x},y^\pm + \hat{y},\lambda^\pm + 1), \label{eq:make_lambda_nonzero}
\end{align}
where $\hat{x}$ and $\hat{y}$ are any vectors satisfying $A \hat{x} + B \hat{y} \leq c$.
Then, since $A \frac{x^+}{\lambda^+} + B \frac{y^+}{\lambda^+} \leq c$, the vector $\frac{x^+}{\lambda^+} \in P$.
By the same argument, $\frac{x^-}{\lambda^-} \in P$.
It follows from the orthogonality constraint of \eqref{eq:one-step proof} that at least one of the vectors $\frac{x^+}{\lambda^+}$ and $\frac{x^-}{\lambda^-}$ is linearly independent from $\{e_1,\dots,e_k\}$ and can be taken as $e_{k+1}$, also proving that the dimension of $\spanof{P}$ is at least $k+1$.
\par
Note that the condition $F=\{0\}$ can be checked by solving $2n$ linear programs (cf.\ the proof of \lemmaref{lem:uniqueness}); if $F\neq \{0\}$, then at least one of these $2n$ linear programs will return a tuple $(x,x^\pm,y^\pm,\lambda^\pm)$ where $x \neq 0$.
We then transform this tuple via \eqref{eq:make_lambda_nonzero}
to ensure that both $\lambda^+, \lambda^- \neq 0$
(we can take $\hat{x} = e_1$ and $\hat{y}$ to be any vector
such that $A e_1 + B \hat{y} \leq c$).
Since we cannot have more than $n$ linearly independent vectors in $\spanof{P}$, this procedure is repeated at most $n$ times.
}

\subsection{Proof of Lemma~\ref{lem:sdp_convex_hull}}
\begin{proof}
Let $e_i \in \R^n$ be the $i$-th canonical basis vector.
We construct $2n$ points
$\{ x_1^{\pm}, \dots, x_n^{\pm} \}$
using the following iterative procedure.

To construct $x_1^{\pm}$, 
we first solve
\eqref{two-step}
% \newnewstuff{(via the equivalent SDP 
% described in the proof of \Cref{thm:two-step})}
with $c =  \pm e_1$
and set $x_1^{+}, x_1^{-1}$ to be optimal
solutions for $+e_1, -e_1$, respectively.
Now to construct $x_{k+1}^{\pm}$
given $x_1^\pm,\dots,x_k^\pm$, 
we solve \eqref{two-step} with $c =  \pm e_{k+1}$ and with the additional constraints that $e_i^\transpose  x = \frac{e_i^\transpose  x_i^+ + e_i^\transpose  x_i^-}{2}$ for each $i=1,\dots,k$; call the resulting optimal points $x_{k+1}^\pm$.
By \Cref{thm:two-step}, 
every $x_k^{\pm}$, \newnewstuff{for $k=1, \dots, n$} is the solution to a 
semidefinite program.

We now prove that $\mathrm{conv}(x_1^{\pm}, \dots, x_n^{\pm})$ is a full-dimensional subset of $S_0^2$.
For a vector $x \in \Rn$ and a positive scalar $r$, let $B(x,r)$ represent the %
\newnewstuff{closed $\ell_2$ }%
%\st{closed $\ell_\infty$?} 
ball centered at $x$ of radius $r$.
Let $x_0$ and $r_0$ be such that $B(x_0,r_0) \subseteq S_0^2$;
such a point exists by the assumption
that $S_0^2$ is full-dimensional.

We first show by induction that for each $k=0,\dots,n$, there exist some $x_k$ and $r_k > 0$ such that $B(x_k,r_k) \subseteq S_0^2$ and $e_i^\transpose  x_k = \frac{e_i^\transpose  x_i^+ + e_i^\transpose  x_i^-}{2}$ for each $i=1,\dots,k$.
The base case $k=0$ holds by assumption.
Assume for $k < n$ we have such an $x_k$ and $r_k > 0$, and let us show the corresponding statement for $k+1$.
By the properties assumed of $x_k$ and $r_k$, and by the definition of $x_{k+1}^\pm$, we have $e_{k+1}^\transpose x_{k+1}^+ \leq e_{k+1}^\transpose x_k - r_k$ and $e_{k+1}^\transpose x_{k+1}^- \geq e_{k+1}^\transpose x_k + r_k$.
Therefore, we have $e_{k+1}^\transpose x_{k+1}^+ < e_{k+1}^\transpose x_{k+1}^-$.
Assume without loss of generality that $\frac{e_{k+1}^\transpose  x_{k+1}^+ + e_{k+1}^\transpose  x_{k+1}^-}{2} \leq e_{k+1}^\transpose x_k$ \newnewstuff{(if the inequality is reversed, swap $x_{k+1}^+$ with $x_{k+1}^{-}$)}.
Let \newnewstuff{$\lambda \in [0,1)$} be such that
\[
\frac{e_{k+1}^\transpose  x_{k+1}^+ + e_{k+1}^\transpose  x_{k+1}^-}{2} = \lambda e_{k+1}^\transpose x_{k+1}^+ + (1-\lambda) e_{k+1}^\transpose x_k.
\]
Now we define $x_{k+1} \defn \lambda x_{k+1}^+ + (1-\lambda) x_k$ and \newnewstuff{$r_{k+1} = (1-\lambda) r_k$}.
It is clear by this definition that $x_{k+1}$ satisfies the constraints $e_i^\transpose  x_{k+1} = \frac{e_i^\transpose  x_i^+ + e_i^\transpose  x_i^-}{2}$ for each $i=1,\dots,k$ since it is a convex combination of the vectors $x_{k+1}^+$ and $x_k$ which also satisfy those constraints.
It is also clear by the choice of $\lambda$ that $x_{k+1}$ satisfies $\frac{e_{k+1}^\transpose  x_{k+1}^+ + e_{k+1}^\transpose  x_{k+1}^-}{2} = e_{k+1}^\transpose  x_{k+1}$.
Since $S_0^2$ is a convex set and since we have $x_{k+1}^+ \in S_0^2$ and $B(x_k,r_k) \subseteq S_0^2$, it follows that $S_0^2$ contains the convex hull of $x_{k+1}^{+}$ and $B(x_k, r_k)$.
% Since $B(x_{k+1},r_{k+1})$ lies inside this convex hull it is also contained in $S_0^2$.
% \st{Why does $B(x_{k+1},r_{k+1})$ lie inside this convex hull?}
\newnewstuff{Observe that $B(x_{k+1},r_{k+1})$ lies inside this convex hull and therefore also $S_0^2$, by the following Minkowski arithmetic:
\begin{align*}
\text{conv}(\{x_{k+1}^{+}\} , B(x_k, r_k)) &\supseteq  \lambda x_{k+1}^{+} + (1-\lambda) B(x_k, r_k) \\
&= \lambda x_{k+1}^{+} + B((1-\lambda)x_k,(1-\lambda) r_k) \\
&= B(\lambda x_{k+1}^{+} + (1-\lambda)x_k,(1-\lambda) r_k) \\
&= B(x_{k+1},r_{k+1}).
\end{align*}}
This establishes the statement for $k+1$ \newnewstuff{and concludes the inductive argument}.

Thus, for each $k=0,\dots,n$, there exist some $x_k$ and $r_k > 0$ such that $B(x_k,r_k) \subseteq S_0^2$ and $e_i^\transpose  x_k = \frac{e_i^\transpose  x_i^+ + e_i^\transpose  x_i^-}{2}$ for each $i=1,\dots,k$.
It now follows that $e_{k}^\transpose x_{k}^+ < e_{k}^\transpose x_{k}^-$ for each $k=1,\dots,n$.
Let $T$ be the $n\times n$ matrix whose $i$-th column is $x_i^+ - x_i^-$.
Then $T_{ii} = e_{i}^\transpose x_{i}^+ - e_{i}^\transpose x_{i}^- \neq 0$, and for $k>i$ we have $T_{ik}=0$ since $e_{i}^\transpose x_{k}^+ = e_{i}^\transpose x_{k}^- = \frac{e_{i}^\transpose  x_{i}^+ + e_{i}^\transpose  x_{i}^-}{2}$.
Therefore $T$ is invertible, because it is a lower triangular with nonzero entries on its diagonal.
Define $$x_c \defn \sum_{i=1}^n \frac{1}{2n}(x_{i}^+ + x_{i}^-)$$
and the set 
\[
    M \defn \{x_c + u \mid \|T^{-1} u\|_{\infty} \leq \frac{1}{2n} \}.
\] 
Observe that $M$ is full-dimensional.
To conclude the proof, we show that $M \subseteq \text{conv}(\{x_i^\pm\}_{i=1}^n)$.
Indeed, for any $x_c + u \in M$,
\begin{align*}
x_c + u &= \sum_{i=1}^n \frac{1}{2n}(x_{i}^+ + x_{i}^-) + T \sum_{i=1}^n e_i e_i^\transpose  T^{-1} u \\
&= \sum_{i=1}^n (\frac{1}{2n} + e_i^\transpose  T^{-1} u)x_{i}^+ + (\frac{1}{2n} - e_i^\transpose  T^{-1} u) x_{i}^- \\
&\in \text{conv}(\{x_i^{\pm}\}_{i=1}^{n}).
\end{align*}

\newnewstuff{Note that if $S_0^2$ is not full-dimensional, the points $\{x_i^{\pm}\}_{i=1}^{n}$ supplied by this algorithm would satisfy $e_{k}^\transpose x_{k}^+ = e_{k}^\transpose x_{k}^-$ for at least one $k=1,\dots,n$.}
% \st{: if $S_0^2$ is not full-dimensional, comment where this proof signals this is the case.}
\end{proof}

\subsection{Proof of Proposition~\ref{prop:perturbations_avoid_nullsets}}

\begin{proof}
We proceed by induction.
For the base case, let $N_1$ be an arbitrary $\lambda^n$ null-set.
We clearly have $\Pr(z_1 \in N_1) = \Pr(\delta_1 \in N_1) = 0$ by
the absolute continuity of the law of $\delta_1$ w.r.t.\ $\lambda^n$.

For the inductive hypothesis, let $k$ be an integer satisfying
$1 \leq k \leq m-1$.
Suppose that for every $\lambda^{nk}$ null-set $N_k$, we have
$\Pr((z_1, \dots, z_k) \in N_k) = 0$.
Now let $N_{k+1}$ be an arbitrary $\lambda^{n(k+1)}$ null-set.
For any $\bar{z}_{1:k} \in \R^{n \times k}$, define the slice
$N_{k+1}(\bar{z}_{1:k}) = \{ \bar{z}_{k+1} \in \R^n \mid (\bar{z}_{1:k}, \bar{z}_{k+1}) \in N_{k+1} \}$.
Next, define the set:
\begin{align*}
    N^0_{k+1} = \{ \bar{z}_{1:k} \in \R^{n \times k} \mid N_{k+1}(\bar{z}_{1:k}) \text{ is $\lambda^n$-measurable and } \lambda^n(N_{k+1}(\bar{z}_{1:k})) = 0 \}.
\end{align*}
By the Fubini-Tonelli theorem for complete measures (see e.g. Theorem 2.39 of~\cite{folland}),
$N^0_{k+1}$ is $\lambda^{nk}$-measurable and $\lambda^{nk}((N^0_{k+1})^c) = 0$.
Abbreviating $z_{1:k} = (z_1, \dots, z_k)$,
\begin{align*}
    \Pr((z_1, \dots, z_{k+1}) \in N_{k+1}) &= \Pr( \{ (z_{1:k}, z_{k+1}) \in N_{k+1} \} \cap \{ z_{1:k} \in N^0_{k+1} \} ) \\
    &\qquad + \Pr( \{ (z_{1:k}, z_{k+1}) \in N_{k+1} \} \cap \{ z_{1:k} \in (N^0_{k+1})^c \}  ) \\
    &\leq \Pr( \{ (z_{1:k}, z_{k+1}) \in N_{k+1} \} \cap \{ z_{1:k} \in N^0_{k+1} \} ) + \Pr(z_{1:k} \in (N^0_{k+1})^c) \\
    &\stackrel{(a)}{=} \Pr( \{ (z_{1:k}, z_{k+1}) \in N_{k+1} \} \cap \{ z_{1:k} \in N^0_{k+1} \} ) \\
    &= \Pr( \{ z_{k+1} \in N_{k+1}(z_{1:k}) \} \cap \{ z_{1:k} \in N^0_{k+1} \} ) \\
    &= \Pr( \{ \delta_{k+1} \in N_{k+1}(z_{1:k}) - f_k(z_{1:k}) \} \cap \{ z_{1:k} \in N^0_{k+1} \} ) \\
    &\stackrel{(b)}{=} 0.
\end{align*}
Above, (a) follows by the inductive hypothesis and
the fact that $(N^0_{k+1})^c$ is a $\lambda^{nk}$ null-set.
Furthermore, (b) follows since
when $z_{1:k} \in N^0_{k+1}$, then 
$N_{k+1}(z_{1:k})$ is a $\lambda^n$ null-set, and hence
by the translation invariance of $\lambda^n$,
$N_{k+1}(z_{1:k}) - f_k(z_{1:k})$
is also a $\lambda^n$ null-set. Therefore,
by the absolute continuity of the law of $\delta_{k+1}$
w.r.t.\ $\lambda^n$
and the independence of $\delta_{k+1}$
from $\delta_1, \dots, \delta_k$,
\begin{align*}
\Pr( \delta_{k+1} \in N_{k+1}(z_{1:k}) - f_k(z_{1:k}) \mid z_{1:k} \in N^0_{k+1}) &= 
\Pr_{\delta_{k+1}}( \delta_{k+1} \in N_{k+1}(z_{1:k}) - f_k(z_{1:k}) ) = 0.
\end{align*}
\end{proof}

\subsection{Proof of Proposition~\ref{prop:krylov}}
\begin{proof}
It is sufficient to show that for 
each integer $k$ satisfying $k\geq n$,
\[
\left[ A x = \trueA x, A^2 x = \trueA^2 x, \dots, A^n x = \trueA^n x \right] \Rightarrow A^k x = \trueA^k x.
\]
Clearly this statement holds for $k=n$.
We now assume the statement holds for
some $k \geq n$ and show that it also holds for $k+1$.
%We suppose $A x = \trueA x, A^2 x = \trueA^2 x, \dots, A^n x = \trueA^n x$, then by our inductive hypothesis we have that $A^k x = \trueA^k x$.
By the Cayley-Hamilton theorem, we have
\[
\trueA^k \in \text{span}(I,\dots,\trueA^{n-1})
\]
and from this it follows that
\[
\trueA^kx \in \text{span}(x,\dots,\trueA^{n-1}x).
\]
Therefore, there exist scalars $\lambda_i$, $i=0,\dots,n-1$, such that $\trueA^k x= \sum_{i=0}^{n-1} \lambda_i \trueA^i x$.
Now we have:
\begin{align*}
    A^{k+1}x &= A A^k x \stackrel{(a)}{=} A \trueA^k x\\
    &= A \left( \sum_{i=0}^{n-1} \lambda_i \trueA^i x \right) = \sum_{i=0}^{n-1} \lambda_i A \trueA^i x  \stackrel{(b)}{=} \sum_{i=0}^{n-1} \lambda_i A^{i+1} x \stackrel{(c)}{=} \sum_{i=0}^{n-1} \lambda_i \trueA^{i+1} x\\
    &= \trueA \left( \sum_{i=0}^{n-1} \lambda_i \trueA^i x \right) = \trueA \trueA^k x = \trueA^{k+1} x,
\end{align*}
where (a) follows from the inductive hypothesis and (b) and (c) follow from the assumption that
$A^i x = \trueA^i x$ for $i=1, \dots, n$.
\end{proof}}

\end{document}